\newtheorem{remark}{Remark}
\newtheorem{theorem}{\bf Theorem}
\newcommand{\mbs}[1]{\mathbf{#1}}
\newcommand{\mbb}[1]{\mathbb{#1}}
\title{
A Functionally Connected Element Method for  Solving Boundary Value Problems 
}
\author{Jielin Yang, Suchuan Dong\thanks{Author of correspondence. Emails: yang1659@purdue.edu (J.~Yang), sdong@purdue.edu (S.~Dong)} \\
Center for Computational and Applied Mathematics \\
Department of Mathematics\\ Purdue University, USA
}
\date{(\today)}
\begin{document}

\maketitle

\begin{abstract}

We present the general forms of piece-wise functions on partitioned domains satisfying an intrinsic  $C^0$ or $C^1$ continuity across the sub-domain boundaries. These general forms are constructed based on a strategy stemming from the theory of functional connections, and we refer to partitioned domains endowed with these general forms as functionally connected elements (FCE). We further present a method, incorporating functionally connected elements and  a least squares collocation approach, for solving boundary and initial value problems. This  method exhibits a spectral-like accuracy, with the free functions involved in the FCE  form represented by polynomial bases or by non-polynomial bases of quasi-random sinusoidal functions. The FCE method offers a unique advantage over traditional element-based methods for boundary value problems involving  relative boundary conditions. A number of linear and nonlinear numerical examples in one and two dimensions are presented to demonstrate the performance of the FCE method developed herein.

\end{abstract}

\noindent {\em Key words:} functionally connected element; least squares; theory of functional connections; spectral accuracy; relative boundary condition; spectral element

\section{Introduction}

%


This work concerns the development of piece-wise functions on partitioned domains satisfying a prescribed continuity  across the sub-domain boundaries, and their applications to solving boundary and initial value problems (BVP/IVP). These piece-wise functions underpin the element-based numerical techniques such as the finite elements and spectral/hp elements~\cite{SzaboB1991,KarniadakisS2005,BabuskaS1990,BabuskaS1994}. 
$C^0$ continuous elements have seen widespread applications in  finite element  type methods owing to the simplicity in its bases construction~\cite{Hughes1987,Bathe2006,KarniadakisS2005}. For a number of application problems elements with a higher continuity (such as $C^1$) may be needed or favored, and these have attracted extensive research over the past decades (see e.g.~\cite{BognerFS1965,ArgyrisFS1968,Bell1969,Veubeke1968,BrennerS2005,LaiS2007,BucheggerJM2016,KaplST2019b,WuXL2020,HughesSTT2021,KaplST2021}, among others).

In this paper we restrict our attention  to regular domain partitions, where the sub-domain boundaries are aligned with the coordinate lines or planes, and we pursue the following question:
\begin{itemize}
\item What is the {\em general form} of  piece-wise functions satisfying  an intrinsic  $C^0$ or $C^1$ continuity across the sub-domain boundaries?
\end{itemize}
We are interested in the general form in the sense that it should encompass any piece-wise function with  $C^0$ or $C^1$ continuity over the paritioned domains.
We devise constructions of such general forms in one and two dimensions (1D/2D) herein, noting that the construction procedure can be extended to three and higher dimensions straightforwardly, albeit with the constructed form becoming significantly more involved in dimensions higher than two. 
We refer to partitioned domains equipped with such general forms of piece-wise functions with a certain continuity as Functionally Connected Elements (FCEs). 
We use ``elements" and ``sub-domains" interchangeably in this paper.


The main strategy for the construction of these piece-wise functions stems from the theory of functional connections (TFC). TFC was originally developed by Mortari and collaborators~\cite{Mortari2017,MortariL2019,LeakeJM2022}. It provides a systematic approach for handling linear constraints, and has been widely applied in  dealing with boundary and initial conditions (see e.g.~\cite{Mortari2017,MortariL2019,Schiassietal2021,LeakeJM2022}, among others). 
%
For the class of problems in this study, to enforce  $C^0$ or $C^1$ continuity across the sub-domain boundaries, it is crucial to devise ``symmetric" forms for the representation of  piece-wise functions on the sub-domains. The form is symmetric in the sense that the continuity constraints are treated in the identical fashion in the piece-wise functions for different sub-domains. In this paper the symmetric treatment of the continuity conditions for different sub-domains is achieved by the introduction of a set of free parameters or free functions associated with the sub-domain boundaries. Overall, the constructed general form for the piece-wise function on a partitioned domain involves a set of free functions associated with the sub-domains and a set of free parameters or free functions associated with the sub-domain boundaries.
The constructed piece-wise function exactly satisfies the $C^0$ or $C^1$ continuity across the sub-domain boundaries, for any arbitrary form of the free functions or any arbitrary value of the free parameters involved therein. 

In order to arrive at a concrete numerical technique, we restrict the free functions involved in the FCE form to some finite-dimensional function space with  sufficient approximation power. Two types of function spaces are adopted in this paper, the polynomial space and a non-polynomial  space spanned by quasi-random sinusoidal functions.
When representing a field function by FCE, the unknowns are the expansion coefficients of the free functions together with the free parameters associated with the sub-domain boundaries in the FCE form. These unknowns will collectively be referred to as the FCE coefficients.


To solve a boundary value problem on a partitioned domain using FCEs, we adopt a least squares collocation  formulation, and  represent the solution field to  partial or  ordinary differential equations (PDE/ODE) by the FCE form. We impose appropriate $C^k$ (with $k\leqslant 1$ related to the PDE order, with reformulations if necessary) continuity  across the sub-domain boundaries. We enforce the differential equations on a set of collocation points in each sub-domain, and with appropriate $C^0$ or $C^1$ FCEs the continuity conditions across the sub-domains are exactly and automatically satisfied. The commonly used boundary conditions, such as Dirichlet and Neumann types, can also be exactly satisfied by FCEs on domains with regular  geometries. Thus, the overall  problem gives rise to a linear or nonlinear algebraic system about the FCE coefficients. The number of equations and the number of unknowns in this system are not equal in general. We seek a least squares solution to this system, and compute the solution by a linear least squares method (for the linear case) or a Gauss-Newton method (for the nonlinear case)~\cite{Bjorck1996}. To solve initial-boundary value problems using FCEs, we employ a space-time approach, in which the space and time variables are treated on the same footing. The solution procedure is then analogous to that for the boundary value problems.


The solution technique as discussed above  belongs to the class of discrete least squares formulations~\cite{Eason1976}, which attempts to minimize the squared residuals over a set of discrete points in the domain~\cite{EasonM1977,ChangG1990,HeinrichsK2008,ZengTBW2019,DongL2021,DongL2021bip,DongY2022rm,NiD2023,DongW2023}. 
Apart from the discrete formulation, the continuous formulation of least squares is another widely-used approach, in which the discretization is performed after the least squares functional is defined~\cite{BochevG1998,CaiLMM1994,CaiMM1997}. The continuous formulation underlies the least squares finite elements, least squares spectral/hp elements, and a number of related techniques~\cite{Jiang1998,BrambleLP1998,KeithPFD2017,GerritsmaP2002,PontazaR2004}. We refer to the monographs~\cite{Jiang1998b,BochevG2009} and the review articles~\cite{Eason1976,BochevG1998} for detailed discussions of the least squares approach.


We present a number of numerical examples in one and two dimensions involving linear and nonlinear PDEs/ODEs to test the performance of the FCE method. This method exhibits an exponential convergence with respect to the number of expansion coefficients within the sub-domains, for both polynomial bases and the non-polynomial bases of quasi-random sinusoidal functions. When the number of sub-domains is varied systematically, a near-algebraic convergence is observed. These  characteristics can be compared to those of the spectral, spectral elements, or hp finite elements~\cite{SzaboB1991,KarniadakisS2005,ShenTW2011,SherwinK1995,KirbyS2006,YuKK2017,ZhengD2011,DongY2009,DongS2012,Dong2015}.


The FCE method has a unique advantage, compared with conventional spectral element or finite element methods, for boundary value problems involving the so-called relative boundary conditions. Relative boundary conditions are conditions representing relative constraints, which can be linear or nonlinear, of the solution field or its derivatives on the  boundary and also possibly on the domain interior. The FCE method can handle this type of problems  straightforwardly and enforce the relative boundary conditions exactly, thanks to its formulation. In contrast, these problems are significantly more challenging to traditional spectral element or finite element type methods. Several test problems of this type are presented for the purpose of illustration.


Additionally, the FCE method and the least squares formulation offer a great deal of flexibility in solving boundary and initial value problems. Using $C^0$ or $C^1$ FCEs can automatically satisfy the imposed $C^0$ or $C^1$ continuity  across sub-domains in the PDE problem. What is most interesting lies in that FCEs can be applied in a mixed fashion, for example, using $C^0$ FCEs for solving second-order PDEs where $C^1$ continuity  across the sub-domains needs to be imposed (for strong form of PDEs). In this case, the use of $C^0$ FCEs  ensures that the $C^0$ conditions are exactly satisfied across the sub-domains, while the $C^1$ conditions can be enforced in the least squares sense. We find that the mixed mode of usage of the FCE method is highly cost-effective, whose accuracy is comparable to that of $C^1$ FCEs with a complexity comparable to that of $C^0$ FCEs. 



The contributions of this paper lie in three aspects: (i) the algorithmic construction of the general forms of piece-wise functions satisfying  exact $C^0$ or $C^1$ continuity over partitioned domains, (ii) the development of the FCE method for  solving boundary and initial value problems, (iii) the demonstration of the effectiveness and the unique advantage of the FCE method, in particular for problems involving non-traditional relative boundary conditions.

The rest of this paper is structured as follows. In Section~\ref{sec_2} we discuss the algorithmic construction of the general forms of piece-wise functions satisfying the intrinsic $C^0$ or $C^1$ continuity over partitioned domains. In Section~\ref{sec_3} we outline the least squares collocation formulation with the FCE method for solving linear and nonlinear boundary/initial value problems. In Section~\ref{sec_4} we present a number of linear and nonlinear numerical examples to demonstrate the performance of the FCE method, employing polynomial bases and non-polynomial bases for representing the free functions involved in the FCE formulation. In particular, we present a set of test problems with linear or nonlinear relative boundary conditions to illustrate the unique capability of the FCE method. Finally, Section~\ref{sec_summary} provides further discussions of the FCE method to conclude the presentation.

\section{Construction of Functionally Connected Elements}
\label{sec_2}

\subsection{Overview of Theory of Functional Connections (TFC)}
\label{sec_tfc}

The theory of functional connections (TFC)~\cite{Mortari2017,MortariL2019} provides a systematic approach for formulating functions that exactly satisfy a set of given linear constraints, such as the boundary or initial conditions.
We next  briefly discuss the aspects of TFC that are relevant to the construction of functionally connected elements in later sections.

Suppose $u(x)$ is a function defined on domain $\Omega$ satisfying the boundary condition
\begin{equation}\label{eq_1}
\mathcal Lu|_{\partial\Omega} = \kappa(x),
\quad x\in\partial\Omega,
\end{equation}
where $\mathcal L$ is a linear algebraic or differential operator, and $\kappa(x)$ is a function given on the boundary $\partial\Omega$. Then $u$ can be expressed in the following form,
\begin{equation}\label{eq_2}
u(x) = g(x) - \mathcal Ag(x) + \mathcal AG(x),
\quad x\in\Omega,
\end{equation}
where $g(x)$ is an arbitrary (free) function on $\Omega$, and $G(x)$ is a particular function satisfying~\eqref{eq_1}. $\mathcal A$ is a linear operator satisfying the following property. For any function $f(x)$ with $\mathcal Lf(x)|_{x\in\partial\Omega}$ defined,  $\mathcal Af(x)$ is a function defined for all $x\in\Omega$, which depends only on $\mathcal Lf|_{\partial\Omega}$ and satisfies, 
\begin{equation}\label{eq_3}
\mathcal L(\mathcal Af)|_{\partial\Omega} = \mathcal L f(x), \quad x\in\partial\Omega.
\end{equation}
One can verify that, for an arbitrary $g(x)$, $u(x)$ given by~\eqref{eq_2} satisfies the condition~\eqref{eq_1}.
Equation~\eqref{eq_2} is referred to as the TFC constrained expression for the condition~\eqref{eq_1}.
It should be noted that $\mathcal AG(x)$ ($x\in\Omega$) only depends on $\mathcal LG|_{\partial\Omega}=\kappa(x)$ ($x\in\partial\Omega$), and that the particular function $G(x)$ itself is not needed for the constrained expression~\eqref{eq_2}.

In the current work we restrict our attention to problems in one and two dimensions (1D/2D). 
We next illustrate how to construct the operator $\mathcal A$ and the constrained expression~\eqref{eq_2} in 1D and 2D, respectively.

\subsubsection{One Dimension}
\label{sec_211}

Consider $\Omega=[a,b]\subset\mbb R$  and suppose the function $u(x)$ defined on $\Omega$ satisfies the following $N$ conditions,
\begin{equation}\label{eq_4}
\mathcal L_iu(x_i) = \kappa_i, 
\quad x_i\in\Omega, \quad 1\leqslant i\leqslant N,
\end{equation}
where $\mathcal L_i$ are linear algebraic/differential operators and $\kappa_i$ are prescribed  constants.
The general form for $u(x)$ satisfying~\eqref{eq_4} is given by~\eqref{eq_2}, in which the operator $\mathcal A$ is constructed as follows.

We choose a set of functions $p_i(x)$ ($1\leqslant i\leqslant N$), termed support functions by following~\cite{LeakeJM2022}, which satisfy a condition to be specified below. For any function $f(x)$ with $\mathcal L_i f(x_i)$ ($1\leqslant i\leqslant N$) defined, let
\begin{equation}\label{eq_5}
\mathcal Af(x) = \sum_{i=1}^N \eta_i p_i(x)
=\mbs p(x)\bm\eta,
\quad x\in\Omega,
\end{equation}
where $\mbs p(x)=(p_1(x),\dots,p_N(x))$, and $\bm\eta=(\eta_1,\dots,\eta_N)^T$ are constants to be determined.
Imposing the conditions (see~\eqref{eq_3})
\begin{equation}
\mathcal L_i(\mathcal Af)(x_i) = \mathcal L_i f(x_i),
\quad 1\leqslant i\leqslant N,
\end{equation}
gives rises to the following linear system,
\begin{equation}\label{eq_7}
\mbs P\bm\eta = \mbs F
\end{equation}
where ($\mbb M^{m\times n}$ denoting the set of $m\times n$ matrices)
\begin{equation}
\mbs P = \begin{bmatrix}
\mathcal L_1\mbs p(x_1)\\
\vdots \\
\mathcal L_N\mbs p(x_N)
\end{bmatrix}\in\mbb M^{N\times N},
\quad \mbs F=\begin{bmatrix}
\mathcal L_1f(x_1)\\ \vdots\\ \mathcal L_Nf(x_N)
\end{bmatrix}\in\mbb R^{N}.
\end{equation}
We require that $p_i(x)$ are chosen such that the matrix $\mbs P$ is non-singular.
Solving~\eqref{eq_7} for $\bm\eta$ and substituting it into
\eqref{eq_5} leads to,
\begin{equation}\label{eq_9}
\mathcal Af(x) = \mbs p(x)\mbs P^{-1}\mbs F = \bm\Phi(x)\mbs F
=\sum_{i=1}^N \mathcal [\mathcal L_if(x_i)]S_i(x),
\quad x\in\Omega,
\end{equation}
where $\bm\Phi(x)$ denotes the so-called switching functions
\begin{equation}
\bm\Phi(x) = \mbs p(x)\mbs P^{-1}
=\begin{bmatrix}
p_1(x) & \dots & p_N(x)
\end{bmatrix}
\begin{bmatrix}
\mathcal L_1p_1(x_1) & \dots & \mathcal L_1p_N(x_1)\\
\vdots & \ddots & \vdots\\
\mathcal L_Np_1(x_N) & \dots & \mathcal L_Np_N(x_N)
\end{bmatrix}^{-1}
=(S_1(x),\dots,S_N(x)).
\end{equation}
It can be observed that $\mathcal Af(x)$ given by~\eqref{eq_9} is linear with respect to $f$, and that it only depends on $\mathcal L_if(x_i)$ ($1\leqslant i\leqslant N$).
The constrained expression for $u(x)$ satisfying the conditions~\eqref{eq_4} is thus given by
\begin{equation}
u(x) = g(x) - \mathcal Ag(x) + \mathcal AG(x)
= g(x) - \sum_{i=1}^N \mathcal L_ig(x_i)S_i(x) + \sum_{i=1}^N \kappa_iS_i(x),
\end{equation}
where $g(x)$ is an arbitrary (free)  function and we have used the property that $G(x)$ is any particular function satisfying~\eqref{eq_4} by noting $\mathcal L_iG(x_i)=\kappa_i$.

Let us next consider  two particular cases of the condition~\eqref{eq_4}, which are  relevant to the current work. In the first case, we assume $\mathcal L_i=\mbs I$ (identify operator), i.e.
\begin{equation}
u(x_i) = C(x_i), \quad
x_i\in\Omega, \quad 1\leqslant i\leqslant N,
\end{equation}
where $C(x)$ is a function whose values are given on $x_i$. In this case the constrained expression is given by
\begin{equation}
u(x) = g(x) - \sum_{i=1}^N g(x_i)S_i(x)
+ \sum_{i=1}^N C(x_i)S_i(x)
\end{equation}
where $g(x)$ denotes the free function and the switching functions $S_i(x)$ are given by
\begin{equation}
\bm\Phi(x) 
=(S_1(x),\dots,S_N(x))
=\begin{bmatrix}
p_1(x) & \dots & p_N(x)
\end{bmatrix}
\begin{bmatrix}
p_1(x_1) & \dots & p_N(x_1)\\
\vdots & \ddots & \vdots\\
p_1(x_N) & \dots & p_N(x_N)
\end{bmatrix}^{-1}.
\end{equation}
One can observe that, if the support functions $p_j(x)$ are chosen as a basis of the polynomial space, then the switching functions $S_i(x)$ will become the Lagrange polynomials and $\mathcal Af(x)$ in this case will be reduced to the  polynomial that interpolates  $f$ on $x_i$ ($1\leqslant i\leqslant N$). In particular, for $N=2$ and assuming $(x_1,x_2)=(a,b)$, the switching functions become
\begin{equation}\label{eq_a60}
\phi_0(a,b,x) = S_1(x) = \frac{b-x}{b-a}, \quad
\phi_1(a,b,x) = S_2(x) = \frac{x-a}{b-a},
\end{equation}
and the general form for $u(x)$ is given by
\begin{equation}
u(x) = g(x) + [C(a) - g(a)]\phi_0(a,b,x) + [C(b)-g(b)]\phi_1(a,b,x).
\end{equation}


In the second case,
let us  consider the following conditions,
\begin{equation}\label{eq_b12}
u(x_i) = C(x_i),\quad 
\frac{du}{dx}(x_i) = C'(x_i),
\quad x_i\in\Omega, \quad 1\leqslant i\leqslant N,
\end{equation}
where $C(x)$ is a function whose values and derivatives are given on $x_i$. The general form for a function $u(x)$ ($x\in\Omega$) satisfying the conditions~\eqref{eq_b12} is  given by, 
\begin{equation}
u(x) 
= g(x) + \sum_{i=1}^N\left[C(x_i) - g(x_i)\right]S_i(x) + \sum_{i=1}^N\left[C'(x_i)-g'(x_i)\right]S_{N+i}(x),
\end{equation}
where $g(x)$ is an arbitrary (free)  function.  The switching functions $S_i(x)$ ($1\leqslant i\leqslant 2N$) are given by,
\begin{equation}\label{eq_b17}
\bm\Phi(x) 
=(S_1(x),\dots,S_{2N}(x))
= \mbs p(x)\mbs P^{-1}\mbs C,
\quad \text{where}\ 
\mbs P = \begin{bmatrix}
\mbs p(x_1)\\ \vdots\\ \mbs p(x_N)\\
\mbs p'(x_1)\\ \vdots\\ \mbs p'(x_N)
\end{bmatrix}\in\mbb M^{2N\times 2N},
\ \mbs C = \begin{bmatrix}
C(x_1)\\ \vdots\\ C(x_N)\\ C'(x_1)\\
\vdots\\ C'(x_N)
\end{bmatrix}\in\mbb R^{2N},
\end{equation}
and $\mbs p(x)=(p_1(x),\dots,p_{2N}(x))$ are the support functions chosen such that $\mbs P$ is non-singular.
If the support functions are chosen to be a basis of the polynomial space of degree at most ($2N-1$), the switching functions will be reduced to the generalized Lagrange polynomials and $\mathcal AC(x)$ in this case will become the Hermite interpolation polynomial for the conditions~\eqref{eq_b12}.
In particular, for $N=2$ and assuming $(x_1,x_2)=(a,b)$, the switching functions then become
\begin{equation}\label{eq_67}
\left\{
\begin{split}
& \psi_0(a,b,x) = S_1(x) = 3[\phi_0(a,b,x)]^2 - 2[\phi_0(a,b,x)]^3,  \\
& \psi_1(a,b,x) = S_2(x) = 3[\phi_1(a,b,x)]^2 - 2[\phi_1(a,b,x)]^3,  \\
& \varphi_0(a,b,x) = S_3(x) = \left[\left. \frac{d\phi_0(a,b,x)}{dx}\right|_{a}\right]^{-1} \left( [\phi_0(a,b,x)]^3 - [\phi_0(a,b,x)]^2\right),  \\
& \varphi_1(a,b,x) = S_4(x) = \left[\left. \frac{d\phi_1(a,b,x)}{dx}\right|_{b}\right]^{-1} \left( [\phi_1(a,b,x)]^3 - [\phi_1(a,b,x)]^2\right),
\end{split} 
\right.
\end{equation}
where $\phi_0(a,b,x)$ and $\phi_1(a,b,x)$ are defined in~\eqref{eq_a60}. The general form for $u(x)$ in this case is given by
\begin{equation}
\begin{split}
u(x) = g(x) &+ [C(a)-g(a)]\phi_0(a,b,x) + [C(b)-g(b)]\phi_1(a,b,x) \\
&+ [C'(a)-g'(a)]\varphi_0(a,b,x) + [C'(b)-g'(b)]\varphi_1(a,b,x).
\end{split}
\end{equation}
where $g(x)$ is an arbitrary function.

\subsubsection{Two Dimensions}
\label{sec_212}

We restrict our attention to the cases where the operator $\mathcal L$ in~\eqref{eq_1} is the identity  or the gradient operator, which are relevant to the construction of functionally connected elements in later sections.

Consider the domain $\Omega=[a_1,b_1]\times[a_2,b_2]\subset\mbb R^2$ and a function $u(\mbs x)$ ($\mbs x=(x,y)\in\Omega$)  satisfying the boundary conditions,
\begin{subequations}\label{eq_12}
\begin{align}
& u(x,a_2) = C(x,a_2), \quad
u(x,b_2) = C(x,b_2), \quad x\in[a_1,b_1];
\label{eq_12a} \\
& u(a_1,y) = C(a_1,y), \quad
u(b_1,y) = C(b_1,y), \quad y\in[a_2,b_2],
\label{eq_12b}
\end{align}
\end{subequations}
where $C(x,y)$ is a given distribution defined on $(x,y)\in\partial\Omega$. 
The general form of such a function is given by the following constrained expression,
\begin{equation}\label{eq_13}
u(\mbs x) = g(\mbs x) - \mathcal Ag(\mbs x) + \mathcal AC(\mbs x),
\quad \mbs x\in\Omega,
\end{equation}
where $g(\mbs x)$ is an arbitrary (free) function. Here the operator $\mathcal A$ is given by, for any function $f(\mbs x)$ that is defined on $\partial\Omega$,
\begin{equation}\label{eq_21}
\begin{split}
\mathcal Af(x,y) 
=\ &\mbs v_1(x)^T\mbs M(x,y;f)\mbs v_2(y),
\quad (x,y)\in\Omega,
\end{split}
\end{equation}
where
\begin{equation}\label{eq_22}
\left\{
\begin{split}
&
\mbs v_i(x) = \begin{bmatrix}
1 & \phi_0(a_i,b_i,x) & \phi_1(a_i,b_i,x)
\end{bmatrix}^T\in \mbb R^3,
\quad x\in[a_i,b_i], \quad\text{for}\ i = 1,2; \\
&\mbs M(x,y;f) = \begin{bmatrix}
0 & f(x,a_2) & f(x,b_2) \\
f(a_1,y) & -f(a_1,a_2) & -f(a_1,b_2) \\
f(b_1,y) & -f(b_1,a_2) & -f(b_1,b_2)
\end{bmatrix}\in\mbb M^{3\times 3}.
\end{split}
\right.
\end{equation}
One can verify that for an arbitrary  $g(\mbs x)$, the expression $u(\mbs x)$ given by~\eqref{eq_13} satisfies the conditions in~\eqref{eq_12}. One can also verify that any function $u(\mbs x)$ satisfying the conditions in~\eqref{eq_12} can be expressed in the form~\eqref{eq_13} for some $g(\mbs x)$.
Note that in~\eqref{eq_22} we have employed the support functions $p_0(x)=1$ and $p_1(x)=x$ when constructing the switching functions in both directions.

We further consider the following boundary conditions,
\begin{subequations}\label{eq_23}
\begin{align}
& u(x,a_2) = C(x,a_2), \quad
u(x,b_2) = C(x,b_2), \quad 
x\in[a_1,b_1];
\label{eq_23a} \\
& u(a_1,y) = C(a_1,y), \quad
u(b_1,y) = C(b_1,y), \quad y\in[a_2,b_2];
\label{eq_23b} \\
& \frac{\partial u}{\partial y}(x,a_2) = C_y(x,a_2), \quad
\frac{\partial u}{\partial y}(x,b_2) = C_y(x,b_2), \quad
x\in[a_1,b_1]; \label{eq_23c} \\
& \frac{\partial u}{\partial x}(a_1,y) = C_x(a_1,y), \quad
\frac{\partial u}{\partial x}(b_1,y) = C_x(b_1,y), \quad
y\in[a_2,b_2]. \label{eq_23d} 
\end{align}
\end{subequations}
Here $C(x,y)$ is a prescribed function whose value and partial derivatives  are given on $\partial\Omega$, and $C_x(x,y)$ and $C_y(x,y)$ denote the partial derivatives of $C(x,y)$ with respect to $x$ and $y$, respectively.
The general form of $u(\mbs x)$ that satisfies the conditions~\eqref{eq_23} is again given by~\eqref{eq_13}, where $g(\mbs x)$ is an arbitrary (free) function and the operator $\mathcal A$ is defined as follows. For any function $f(\mbs x)$ with its value and first/second partial derivatives defined on $\partial\Omega$, $\mathcal Af(\mbs x)$ is a function defined on $\Omega$ by,
\begin{equation}\small\label{eq_24}
\begin{split}
&\mathcal Af(x,y) 
&= \mbs v_1(x)^T\mbs M(x,y;f)\mbs v_2(y),
\quad (x,y)\in\Omega,
\end{split}
\end{equation}
where 
\begin{equation}\label{eq_25}
\left\{
\begin{split}
& \mbs v_i(x)=\begin{bmatrix}
1 & \psi_0(a_i,b_i,x) & \psi_1(a_i,b_i,x) & \varphi_0(a_i,b_i,x) & \varphi_1(a_i,b_i,x)
\end{bmatrix}^T\in\mbb R^5,
\ x\in[a_i,b_i], \ \text{for}\ i=1,2; \\
&\mbs M(x,y;f)=\begin{bmatrix}
0 & f(x,a_2) & f(x,b_2) & f_y(x,a_2) & f_y(x,b_2) \\
f(a_1,y) & -f(a_1,a_2) & -f(a_1,b_2) & -f_y(a_1,a_2) & -f_y(a_1,b_2) \\
f(b_1,y) & -f(b_1,a_2) & -f(b_1,b_2) & -f_y(b_1,a_2) & -f_y(b_1,b_2) \\
f_x(a_1,y) & -f_x(a_1,a_2) & -f_x(a_1,b_2) & -f_{xy}(a_1,a_2) & -f_{xy}(a_1,b_2) \\
f_x(b_1,y) & -f_x(b_1,a_2) & -f_x(b_1,b_2) & -f_{xy}(b_1,a_2) & -f_{xy}(b_1,b_2)
\end{bmatrix}\in\mbb M^{5\times 5}.
\end{split}
\right.
\end{equation}
Here $\phi_0$, $\psi_1$, $\varphi_0$ and $\varphi_1$ are defined in~\eqref{eq_67}, and $f_{xy}=\frac{\partial^2f}{\partial x\partial y}$. 
One can verify that for an arbitrary function $g(\mbs x)$ the function $u(\mbs x)$ given by~\eqref{eq_13}, with the operator $\mathcal A$ given by~\eqref{eq_24}, satisfies the conditions~\eqref{eq_23}.
One can also verify that any function $u(\mbs x)$ that satisfies the conditions in~\eqref{eq_23} can be written into the form~\eqref{eq_13} with the operator $\mathcal A$ given by~\eqref{eq_24}, for some $g(\mbs x)$.

\subsection{ Representation of  Functions  Coupled by Linear Constraints}
\label{sec_22}

We next consider how to formulate the general forms of two or more  functions that are coupled through prescribed linear constraints. This underpins the construction of piece-wise functions with  exact $C^0$ or $C^1$ continuity  across the sub-domain boundaries  in the subsequent section. We discuss the formulation in one and two dimensions individually.

\subsubsection{One Dimension}

Consider the domain $\Omega\in[a,b]\subset\mbb R$. Suppose $u_1(x)$ and $u_2(x)$ are two functions defined on $\Omega$ satisfying the following conditions,
\begin{equation}\label{eq_26}
\mathcal L_1u_1(c_i) = \mathcal L_2u_2(c_i) + \kappa_i,
\quad c_i\in[a,b], \quad 1\leqslant i\leqslant N,
\end{equation}
where $\mathcal L_1$ and $\mathcal L_2$ are two linear algebraic/differential operators, and $\kappa_i$ are prescribed values. $u_1(x)$ and $u_2(x)$ are coupled  because of these constraints.
We are interested in the general form of $u_1(x)$ and $u_2(x)$ that satisfy~\eqref{eq_26} exactly.

Let 
\begin{equation}\label{eq_27}
\mathcal L_1 u_1(c_i) = \alpha_i, 
\quad 1\leqslant i\leqslant N,
\end{equation}
where $\alpha_i$ are parameters to be determined. Then equation~\eqref{eq_26} is reduced to
\begin{equation}\label{eq_28}
\mathcal L_2 u_2(c_i) = \alpha_i- \kappa_i, \quad 1\leqslant i\leqslant N.
\end{equation}
One can note that the conditions~\eqref{eq_27} and~\eqref{eq_28} are now two independent linear constraints for $u_1$ and $u_2$, respectively.  In light of the discussions in Section~\ref{sec_211}, the general forms for $u_1(x)$ and $u_2(x)$ are given by,
\begin{subequations}\label{eq_29}
\begin{align}
&
u_1(x) = g_1(x) + \sum_{i=1}^N\left[\alpha_i - \mathcal L_1 g_1(c_i) \right]S_i(x),
\quad  x\in[a,b], \\
& u_2(x) = g_2(x) + \sum_{i=1}^N\left[\alpha_i-\kappa_i-\mathcal L_2 g_2(c_i) \right]R_i(x),
\quad  x\in[a,b],
\end{align}
\end{subequations}
where $g_1(x)$ and $g_2(x)$ are two arbitrary (free) functions, and $\bm\alpha = (\alpha_1,\dots,\alpha_N)^T$ is a set of free parameters. The switching functions $S_i(x)$ and $R_i(x)$ are given by,
\begin{subequations}
\begin{align}
&
\begin{bmatrix} S_1(x) & \dots & S_N(x) \end{bmatrix}
=\mbs p(x)\mbs P_1^{-1}
=\begin{bmatrix} p_1(x) & \dots & p_N(x) \end{bmatrix}
\begin{bmatrix} \mathcal L_1p_1(c_1) & \dots & \mathcal L_1p_N(c_1) \\
\vdots & \ddots & \vdots \\
L_1p_1(c_N) & \dots & \mathcal L_1p_N(c_N)
\end{bmatrix}^{-1}, \\
&
\begin{bmatrix} R_1(x) & \dots & R_N(x) \end{bmatrix}
=\mbs p(x)\mbs P_2^{-1}
=\begin{bmatrix} p_1(x) & \dots & p_N(x) \end{bmatrix}
\begin{bmatrix} \mathcal L_2p_1(c_1) & \dots & \mathcal L_2p_N(c_1) \\
\vdots & \ddots & \vdots \\
L_2p_1(c_N) & \dots & \mathcal L_2p_N(c_N)
\end{bmatrix}^{-1},
\end{align}
\end{subequations}
where $\mbs p(x)=(p_1(x),\dots,p_N(x))$ are the support functions such that the matrices $\mbs P_1$ and $\mbs P_2$ are non-singular.

\begin{theorem}\label{thm_1}
(i) The functions $u_1(x)$ and $u_2(x)$ given by~\eqref{eq_29} satisfy~\eqref{eq_26}, for arbitrary functions $g_1(x)$ and $g_2(x)$ and arbitrary values of the parameter $\bm\alpha$ therein.
(ii) Any two functions $u_1(x)$ and $u_2(x)$ satisfying~\eqref{eq_26} can be expressed into~\eqref{eq_29}, for some $g_1(x)$ and $g_2(x)$ and some value of the parameter $\bm\alpha$.
\end{theorem}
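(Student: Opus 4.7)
The plan is to verify part (i) by direct substitution using the key property of the switching functions, and to establish part (ii) by an explicit choice of $g_1$, $g_2$, and $\bm\alpha$.

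For part (i), I first note the defining property of the switching functions: since $\begin{bmatrix} S_1(x) & \dots & S_N(x) \end{bmatrix} = \mbs p(x)\mbs P_1^{-1}$, applying $\mathcal L_1$ at $c_i$ yields a row of $\mbs P_1 \mbs P_1^{-1} = \mbs I$, so $\mathcal L_1 S_j(c_i) = \delta_{ij}$. Analogously $\mathcal L_2 R_j(c_i) = \delta_{ij}$. Applying $\mathcal L_1$ to the expression for $u_1(x)$ in~\eqref{eq_29} at $x=c_i$ and using linearity gives
\begin{equation*}
\mathcal L_1 u_1(c_i) = \mathcal L_1 g_1(c_i) + \sum_{j=1}^N \bigl[\alpha_j - \mathcal L_1 g_1(c_j)\bigr]\delta_{ij} = \alpha_i,
\end{equation*}
and similarly $\mathcal L_2 u_2(c_i) = \alpha_i - \kappa_i$. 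Subtracting the two recovers~\eqref{eq_26} for every $i$, independent of the choice of $g_1$, $g_2$, $\bm\alpha$.

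For part (ii), suppose $u_1(x)$ and $u_2(x)$ are any two functions satisfying~\eqref{eq_26}. Choose $g_1(x) := u_1(x)$, $g_2(x) := u_2(x)$, and $\alpha_i := \mathcal L_1 u_1(c_i)$ for $1\leqslant i\leqslant N$. With this selection the bracketed coefficients in~\eqref{eq_29} become $\alpha_i - \mathcal L_1 g_1(c_i) = 0$ and, by~\eqref{eq_26}, $\alpha_i - \kappa_i - \mathcal L_2 g_2(c_i) = \mathcal L_1 u_1(c_i) - \kappa_i - \mathcal L_2 u_2(c_i) = 0$. Therefore the right-hand sides of~\eqref{eq_29} collapse to $g_1(x)=u_1(x)$ and $g_2(x)=u_2(x)$, so the representation~\eqref{eq_29} holds with the chosen data.

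There is no real obstacle here; the only substantive point is confirming the biorthogonality relations $\mathcal L_k S_j(c_i)=\delta_{ij}$ (and likewise for $R_j$) from the definition of the switching functions via $\mbs P_k^{-1}$, which requires the non-singularity of $\mbs P_1$ and $\mbs P_2$ already imposed in the construction. I would state this biorthogonality as a short preliminary remark before part (i) to keep the two computations transparent, and I would emphasize that the freedom in (ii) is not unique: many different $(g_1,g_2,\bm\alpha)$ represent the same pair $(u_1,u_2)$, which is a feature of the constrained expression rather than a defect.
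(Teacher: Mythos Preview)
Your proof is correct and follows essentially the same approach as the paper: part (i) by direct verification using the biorthogonality relations $\mathcal L_1 S_j(c_i)=\delta_{ij}$ and $\mathcal L_2 R_j(c_i)=\delta_{ij}$, and part (ii) by the explicit choice $g_1=u_1$, $g_2=u_2$, $\alpha_i=\mathcal L_1 u_1(c_i)$. You have simply written out in full the computations that the paper leaves as ``straightforward verification.''
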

\begin{proof}
(i) By straightforward verification and noting that $\mathcal L_1S_i(c_j)=\delta_{ij}$ and $\mathcal L_2R_i(c_j)=\delta_{ij}$ ($\delta_{ij}$ denoting the Kronecker delta).
(ii) Suppose $u_1(x)$ and $u_2(x)$ denote any two functions satisfying~\eqref{eq_26}. They can be written into~\eqref{eq_29}, by setting  $g_1(x)=u_1(x)$, $g_2(x)=u_2(x)$, and $\alpha_i=\mathcal L_1u_1(c_i)$ ($1\leqslant i\leqslant N$) in~\eqref{eq_29}. 
\end{proof}

Theorem~\ref{thm_1} indicates that the expressions~\eqref{eq_29} are the general forms of $u_1(x)$ and $u_2(x)$ satisfying the condition~\eqref{eq_26}.
It is noted that the general forms for $u_1$ and $u_2$ contain, besides the two free functions $g_1$ and $g_2$, a set of free parameters $\bm\alpha$.

We next consider two particular cases, $N=1$ and $N=2$, for examples. In the first case, let $c_1=c\in[a,b]$ and we consider the constraint 
\begin{equation}
u_1(c)=u_2(c).
\end{equation}
Then the general forms for $u_1(x)$ and $u_2(x)$ are given by
\begin{subequations}
\begin{align}
& u_1(x) = g_1(x) + [\alpha - g_1(c)], \quad x\in\Omega; \\
& u_2(x) = g_2(x) + [\alpha - g_2(c)], \quad x\in\Omega,
\end{align}
\end{subequations}
where $\alpha$ is a free parameter,  $g_1(x)$ and $g_2(x)$ are two free functions, and we have used the set $\{ 1  \}$ as the support function.
In the second case, let $c_1=c_2=c\in[a,b]$, and we consider the conditions,
\begin{equation}
u_1(c) = u_2(c), \quad \left.\frac{du_1}{dx}\right|_{c} = \left.\frac{du_2}{dx}\right|_{c}.
\end{equation}
Then the general forms for $u_1(x)$ and $u_2(x)$ are given by
\begin{subequations}
\begin{align}
& u_1(x) = g_1(x) + [\alpha_1 - g_1(c)] + [\alpha_2 - g_1'(c)](x-c), \quad x\in\Omega; \\
& u_2(x) = g_2(x) + [\alpha_1 - g_2(c)] + [\alpha_2 - g_2'(c)](x-c), \quad x\in\Omega,
\end{align}
\end{subequations}
where $(\alpha_1,\alpha_2)$ are free parameters, $g_1(x)$ and $g_2(x)$ are two free functions, and we have used the set $\{ 1,x\}$ as the support functions.

\begin{remark}

By introducing certain free parameters (or certain additional free functions), the different functions  coupled through the constraints can become de-coupled, and the general form for each individual function can be constructed in the usual fashion based on TFC. This is the essence of the construction algorithm discussed here. We note that this construction is very different from that of~\cite{LeakeJM2022} (see e.g.~Section 1.3.4 therein). In~\cite{LeakeJM2022}, the constructed expressions are not symmetric with respect to different coupled functions, in the sense that the coupling constraints are embedded in the expressions for some of the functions only, while they do not appear in the constructed expressions for the rest of the functions.
In contrast, the general forms constructed here  are ``symmetric" with respect to the coupling constraints, in the sense that the constraint that  each individual function needs to satisfy is similar. The ``symmetric'' general forms constructed here are crucial to the functionally connected elements in the subsequent section.  

\end{remark}

\begin{remark}
The  construction discussed here  can be generalized in a straightforward way to more than two coupled functions. As an illustration, let us consider  three functions $u_1(x)$, $u_2(x)$ and $u_3(x)$ coupled through the following constraints,
\begin{equation}\label{eq_31}
\mathcal L_1u_1(c_i) + \mathcal L_2u_2(c_i) + \mathcal L_3u_3(c_i) = \kappa_i,
\quad c_i\in[a,b], \quad 1\leqslant i\leqslant N,
\end{equation}
where $\mathcal L_i$ ($i=1,2,3$) are linear operators.
By introducing the free parameters $\bm\alpha=(\alpha_1,\dots,\alpha_N)^T$ and $\bm\beta=(\beta_1,\dots,\beta_N)^T$,
we can reduce the constraint~\eqref{eq_31} into,
\begin{equation}
\left\{
\begin{split}
& \mathcal L_1u_1(c_i) = \alpha_i, \\
& \mathcal L_2u_2(c_i) = \beta_i, \\
& \mathcal L_3 u_3(c_i) = \kappa_i-\alpha_i-\beta_i,
\quad 1\leqslant i\leqslant N.
\end{split}
\right.
\end{equation}
Therefore, the general forms for $u_1(x)$, $u_2(x)$ and $u_3(x)$ can be obtained based on the procedure discussed in Section~\ref{sec_211}. It should be noted that these general forms will contain three free functions and two sets of free parameters $\bm\alpha$ and $\bm\beta$.

\end{remark}

\subsubsection{Two Dimensions}
\label{sec_222}

In 2D we  focus on the following settings only, which are relevant to the construction of functionally connected elements in the subsequent section.
Consider the domains $\Omega_1=[a_1,c]\times[a_2,b_2]\subset\mbb R^2$, $\Omega_2=[c,b_1]\times[a_2,b_2]\subset\mbb R^2$, and $\Omega=\Omega_1\cup\Omega_2=[a_1,b_1]\times[a_2,b_2]\subset \mbb R^2$, where $c\in(a_1,b_1)$. Let $u(\mbs x)$ denote a piece-wise function defined on $\Omega$ by
\begin{equation}
u(\mbs x)=\left\{
\begin{array}{ll}
u_1(\mbs x), & \mbs x\in\Omega_1,\\
u_2(\mbs x), & \mbs x\in\Omega_2,
\end{array}
\right.
\end{equation}
and satisfying the conditions
\begin{equation}\label{eq_34}
\left\{
\begin{split}
&
u(a_1,y) = C(a_1,y), \quad
u(b_1,y) = C(b_1,y), \quad
\forall y\in[a_2,b_2]; \\
&
u(x,a_2) = C(x,a_2), \quad
u(x,b_2) = C(x,b_2), \quad
\forall x\in[a_1,b_1],
\end{split}\right.
\end{equation}
where $C(\mbs x)$ denotes a prescribed function  on $\partial\Omega$.
In addition, we require that  $u_1(\mbs x)$ and $u_2(\mbs x)$ satisfy the constraint,
\begin{equation}\label{eq_35}
u_1(c,y) = u_2(c,y) + \kappa(y), \quad \forall y\in[a_2,b_2],
\end{equation}
or the constraints,
\begin{equation}\label{eq_36}
\left\{
\begin{split}
&
u_1(c,y) = u_2(c,y) + \kappa(y), \quad \forall y\in[a_2,b_2], \\
&
\left.\frac{\partial u_1}{\partial x}\right|_{(c,y)}
= \left.\frac{\partial u_2}{\partial x}\right|_{(c,y)}
+ \kappa_1(y),
\quad \forall y\in[a_2,b_2],
\end{split} \right.
\end{equation}
where $\kappa(y)$ and $\kappa_1(y)$ are prescribed functions.
We assume that the constraints~\eqref{eq_35} and~\eqref{eq_36} are compatible with~\eqref{eq_34} at $(x,y)=(c,a_2)$ and $(c,b_2)$, that is,
\begin{equation}
\kappa(a_2)=\kappa(b_2)=0, \quad
\kappa_1(a_2)=\kappa_1(b_2)=0.
\end{equation}
We are interested in the general forms for $u_1(\mbs x)$ and $u_2(\mbs x)$ that satisfy the conditions~\eqref{eq_34} and~\eqref{eq_35} or the conditions~\eqref{eq_34} and~\eqref{eq_36}. 

Let us first consider the general forms of $u_1(\mbs x)$ and~$u_2(\mbs x)$  satisfying~\eqref{eq_34} and~\eqref{eq_35}. Let
\begin{equation}\label{eq_38}
u_1(c,y) = \alpha(y), \quad
y\in[a_2,b_2],
\end{equation}
where $\alpha(y)$ is a function to be determined that satisfies
\begin{equation}\label{eq_39}
\alpha(a_2) = C(c,a_2), \quad
\alpha(b_2) = C(c,b_2).
\end{equation} 
Then the condition~\eqref{eq_35} is reduced to,
\begin{equation}\label{eq_40}
u_2(c,y) = \alpha(y) - \kappa(y),
\quad y\in[a_2,b_2].
\end{equation}
The general form of $\alpha(y)$ satisfying~\eqref{eq_39} is given by, based on  Section~\ref{sec_211},
\begin{equation}\label{eq_41}
\alpha(y) = \hat\alpha(y) + \left[ C(c,a_2)-\hat\alpha(a_2\right]\phi_0(a_2,b_2,y) + \left[C(c,b_2)-\hat\alpha(b_2) \right]\phi_1(a_2,b_2,y),
\quad y\in[a_2,b_2],
\end{equation}
where $\hat\alpha(y)$ is an arbitrary (free) function.

For $u_1(\mbs x)$ the conditions in~\eqref{eq_34} are reduced to
\begin{equation}\label{eq_42}
u_1(a_1,y) = C(a_1,y), \quad\forall y\in[a_2,b_2];
\quad 
u_1(x,a_2)=C(x,a_2), \quad
u_1(x,b_2) = C(x,b_2), \quad
\forall x\in[a_1,c].
\end{equation}
For $u_2(\mbs x)$ they are reduced to
\begin{equation}\label{eq_43}
u_2(b_1,y) = C(b_1,y), \quad\forall y\in[a_2,b_2];
\quad 
u_2(x,a_2)=C(x,a_2), \quad
u_2(x,b_2) = C(x,b_2), \quad
\forall x\in[c,b_1].
\end{equation}

In light of  Section~\ref{sec_212}, the general form of $u_1(\mbs x)$ satisfying~\eqref{eq_42} and~\eqref{eq_38} is given by
\begin{equation}\small\label{eq_44}
\left\{
\begin{split}
& u_1(\mbs x) = g_1(\mbs x) + \mbs v_1(x)^T\mbs M_1 \mbs v_2(y), \\
& \mbs v_1(x) = \begin{bmatrix}
1 & \phi_0(a_1,c,x) & \phi_1(a_1,c,x)
\end{bmatrix}^T, \quad
\mbs v_2(y) = \begin{bmatrix}
1 & \phi_0(a_2,b_2,y) & \phi_1(a_2,b_2,y)
\end{bmatrix}^T, \\
& \mbs M_1 = \begin{bmatrix}
0 & C(x,a_2)-g_1(x,a_2) & C(x,b_2)-g_1(x,b_2)\\
C(a_1,y)-g_1(a_1,y) & -[ C(a_1,a_2)-g_1(a_1,a_2)] & -[C(a_1,b_2)-g_1(a_1,b_2)]\\
\alpha(y)-g_1(c,y) & -[C(c,a_2)-g_1(c,a_2)] & -[C(c,b_2)-g_1(c,b_2)]
\end{bmatrix}, 
\end{split} \right.
\end{equation}
in which $g_1(\mbs x)$ is an arbitrary (free) function and $\alpha(y)$ is given by~\eqref{eq_41}.
The general form of $u_2(\mbs x)$ satisfying~\eqref{eq_43} and~\eqref{eq_40} is given by
\begin{equation}\small\label{eq_45}
\left\{
\begin{split}
& u_2(\mbs x) = g_2(\mbs x) + \mbs w_1(x)^T \mbs M_2\mbs v_2(y), \\
& \mbs w_1(x) = \begin{bmatrix} 1 & \phi_0(c,b_1,x) & \phi_1(c,b_1,x) \end{bmatrix}^T, \\
& \mbs M_2 = \begin{bmatrix}
0 & C(x,a_2)-g_2(x,a_2) & C(x,b_2)-g_2(x,b_2)\\
\alpha(y)-\kappa(y)-g_2(c,y) & -[ C(c,a_2)-g_2(c,a_2)] & -[C(c,b_2)-g_2(c,b_2)]\\
C(b_1,y)-g_2(b_1,y) & -[C(b_1,a_2)-g_2(b_1,a_2)] & -[C(b_1,b_2)-g_2(b_1,b_2)]
\end{bmatrix}, 
\end{split} \right.
\end{equation}
where $g_2(\mbs x)$ is an arbitrary (free) function, $\alpha(y)$ is given by~\eqref{eq_41}, and $\mbs v_2(y)$ is defined in~\eqref{eq_44}.
%

\begin{theorem}\label{thm_2}
(i) The functions $u_1(\mbs x)$ and $u_2(\mbs x)$ given by~\eqref{eq_44} and~\eqref{eq_45} satisfy the conditions~\eqref{eq_42},~\eqref{eq_43} and~\eqref{eq_35}, for arbitrary functions $g_1(\mbs x)$, $g_2(\mbs x)$, and $\hat\alpha(y)$ involved therein.
(ii) Any two functions $u_1(\mbs x)$ and $u_2(\mbs x)$ satisfying the conditions~\eqref{eq_42},~\eqref{eq_43} and~\eqref{eq_35} can be expressed into~\eqref{eq_44} and~\eqref{eq_45}, for some $g_1(\mbs x)$, $g_2(\mbs x)$, and $\hat\alpha(y)$.
\end{theorem}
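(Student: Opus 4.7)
The plan is to reduce both parts of Theorem~\ref{thm_2} to direct applications of the TFC constrained expressions established earlier. The auxiliary function $\alpha(y)$ plays the role of an unknown trace on the shared boundary $x=c$ that decouples $u_1$ and $u_2$ into two independent 2D TFC problems on $\Omega_1$ and $\Omega_2$, in complete analogy with the 1D coupling argument behind Theorem~\ref{thm_1}.

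For part (i), I would first observe that~\eqref{eq_41} is precisely the 1D TFC constrained expression (with support functions $\{1,y\}$) for the two pointwise conditions~\eqref{eq_39}, so $\alpha(a_2)=C(c,a_2)$ and $\alpha(b_2)=C(c,b_2)$ for any free $\hat\alpha(y)$. Next,~\eqref{eq_44} is the specialization of the 2D constrained expression~\eqref{eq_13}--\eqref{eq_22} to $\Omega_1=[a_1,c]\times[a_2,b_2]$ with Dirichlet data $C(a_1,y)$, $\alpha(y)$, $C(x,a_2)$, $C(x,b_2)$ on the four edges. By the result of Section~\ref{sec_212}, this expression satisfies all four Dirichlet conditions automatically for any $g_1(\mbs x)$; in particular~\eqref{eq_42} and $u_1(c,y)=\alpha(y)$ hold. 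An identical argument on $\Omega_2$, with boundary datum $\alpha(y)-\kappa(y)$ on the left edge $x=c$, shows that~\eqref{eq_45} satisfies~\eqref{eq_43} and $u_2(c,y)=\alpha(y)-\kappa(y)$ for any $g_2(\mbs x)$. Subtracting these two trace identities yields~\eqref{eq_35}. The compatibility $\kappa(a_2)=\kappa(b_2)=0$ is exactly what is needed so that the Dirichlet data on each sub-domain agree at the shared corners $(c,a_2)$ and $(c,b_2)$, which is required for the 2D TFC construction.

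For part (ii), given any $u_1,u_2$ satisfying~\eqref{eq_42},~\eqref{eq_43},~\eqref{eq_35}, I would set $g_1=u_1$, $g_2=u_2$, and $\hat\alpha(y)=u_1(c,y)$. Evaluating~\eqref{eq_42} at $y=a_2,b_2$ yields $u_1(c,a_2)=C(c,a_2)$ and $u_1(c,b_2)=C(c,b_2)$, so substituting $\hat\alpha=u_1(c,\cdot)$ into~\eqref{eq_41} makes the two bracketed correction terms vanish identically and forces $\alpha(y)=u_1(c,y)$. With these substitutions every entry of $\mbs M_1$ becomes the difference of $u_1$ from one of its own boundary values on $\partial\Omega_1$, which vanishes by~\eqref{eq_42} and by $\alpha(y)=u_1(c,y)$; hence $\mbs v_1(x)^T\mbs M_1\mbs v_2(y)\equiv 0$ and~\eqref{eq_44} reduces to $u_1=g_1$. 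The same verification applies to $u_2$: the only entry of $\mbs M_2$ involving the coupling is $\alpha(y)-\kappa(y)-g_2(c,y)=u_1(c,y)-\kappa(y)-u_2(c,y)$, which vanishes by~\eqref{eq_35}, and the remaining entries vanish by~\eqref{eq_43}.

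I do not expect a genuinely hard step. The only mildly delicate point is bookkeeping: confirming that the particular ordering of rows and columns in $\mbs M_1$ and $\mbs M_2$ correctly matches the four edges of $\partial\Omega_1$ and $\partial\Omega_2$, and that in part (ii) every entry vanishes simultaneously under the prescribed choices. The analytical substance of both parts is already contained in the 1D and 2D TFC constructions of Sections~\ref{sec_211} and~\ref{sec_212}.
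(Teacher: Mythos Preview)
Your proposal is correct and follows essentially the same approach as the paper: part (i) is handled by straightforward verification via the 2D TFC constrained expression, and part (ii) is proved by the identical choice $g_1=u_1$, $g_2=u_2$, $\hat\alpha(y)=u_1(c,y)$, which makes the matrices $\mbs M_1$ and $\mbs M_2$ vanish entrywise. Your write-up is simply more explicit about the bookkeeping than the paper's terse one-line proof.
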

\begin{proof}
(i) The proof follows from straightforward verification.
(ii) Suppose $\omega_1(\mbs x)$ and $\omega_2(\mbs x)$ denote any two functions satisfying~\eqref{eq_42},~\eqref{eq_43} and~\eqref{eq_35}, and that $u_1(\mbs x)$ and $u_2(\mbs x)$ are defined by~\eqref{eq_44} and~\eqref{eq_45}. Choose $g_1=\omega_1(\mbs x)$, $g_2=\omega_2(\mbs x)$, and $\hat\alpha(y)=\omega_1(c,y)$ in~\eqref{eq_44} and~\eqref{eq_45}. Then~\eqref{eq_44} and~\eqref{eq_45} are reduced to $u_1(\mbs x)=\omega_1(\mbs x)$ and $u_2(\mbs x)=\omega_2(\mbs x)$, respectively.
\end{proof}

This theorem shows that the expressions~\eqref{eq_44} and~\eqref{eq_45} are the general forms for $u_1(\mbs x)$ and $u_2(\mbs x)$ that satisfy the conditions~\eqref{eq_34} and~\eqref{eq_35}.
It is noted that, besides $g_1(\mbs x)$ and $g_2(\mbs x)$, these general forms  contain an additional free function $\hat\alpha(y)$ associated with the shared interface $x=c$.

We next construct the general forms for $u_1(\mbs x)$ and $u_2(\mbs x)$ that satisfy the conditions~\eqref{eq_34} and~\eqref{eq_36}. Apart from~\eqref{eq_38}, let
\begin{equation}\label{eq_46}
\left.\frac{\partial u_1}{\partial x} \right|_{(c,y)} = \beta(y),
\quad y\in[a_2,b_2],
\end{equation}
where $\beta(y)$ is a function to be determined that satisfies
\begin{equation}\label{eq_47}
\beta(a_2) = C_x(c,a_2), \quad
\beta(b_2) = C_x(c,b_2).
\end{equation}
Then the second condition in~\eqref{eq_36} is transformed into
\begin{equation}\label{eq_48}
\left.\frac{\partial u_2}{\partial x} \right|_{(c,y)} = \beta(y) - \kappa_1(y), \quad y\in[a_2,b_2].
\end{equation}
The general form for $\beta(y)$ satisfying~\eqref{eq_47} is given by,
in light of Section~\ref{sec_211},
\begin{equation}\label{eq_49}
\beta(y) = \hat\beta(y) + [C_x(c,a_2)-\hat\beta(a_2)]\phi_0(a_2,b_2,y)
+ [C_x(c,b_2)-\hat\beta(b_2)]\phi_1(a_2,b_2,y), \quad y\in[a_2,b_2],
\end{equation}
where $\hat\beta(y)$ is an arbitrary (free) function.

Therefore, the conditions that $u_1(\mbs x)$ needs to satisfy consist of~\eqref{eq_42}, \eqref{eq_38}, and~\eqref{eq_46}. The general form for such a function is given by
\begin{equation}\small\label{eq_50}
\left\{
\begin{split}
& u_1(\mbs x) = g_1(\mbs x)+ \mbs z_1(x)\mbs M_3\mbs v_2(y), \\
& \mbs z_1(x) = \begin{bmatrix} 1 & \varphi_0(x) & \varphi_1(x) & \varphi_2(x) \end{bmatrix}^T, \\
& \varphi_0(x)= \left(\frac{c-x}{c-a_1} \right)^2, \quad
\varphi_1(x) = \frac{x-a_1}{c-a_1}\left(1+ \frac{c-x}{c-a_1}\right),
\quad
\varphi_2(x) = -\frac{(x-a_1)(c-x)}{c-a_1}, \\
& \mbs M_3 = \begin{bmatrix}
0 & C(x,a_2)-g_1(x,a_2) & C(x,b_2)-g_1(x,b_2)\\
C(a_1,y)-g_1(a_1,y) & -[C(a_1,a_2)-g_1(a_1,a_2)] & -[C(a_1,b_2)-g_1(a_1,b_2)]\\
\alpha(y)-g_1(c,y) & -[C(c,a_2)-g_1(c,a_2)] & -[C(c,b_2)-g_1(c,b_2)] \\
\beta(y)-g_{1x}(c,y) & -[C_x(c,a_2)-g_{1x}(c,a_2)] & -[C_x(c,b_2)-g_{1x}(c,b_2)]
\end{bmatrix}, \\
\end{split} \right.
\end{equation}
where $g_1(\mbs x)$ is an arbitrary (free) function, $g_{1x}=\frac{\partial g_1}{\partial x}$, $\alpha(y)$ is given by~\eqref{eq_41}, and $\beta(y)$ is given by~\eqref{eq_49}.
The conditions that $u_2(\mbs x)$ needs to satisfy consist of~\eqref{eq_43}, \eqref{eq_40}, and~\eqref{eq_48}. The general form for $u_2(\mbs x)$ is given by
\begin{equation}\small\label{eq_52}
\left\{
\begin{split}
& u_2(\mbs x) = g_2(\mbs x) + \mbs z_2(x)\mbs M_4\mbs v_2(y), \\
& \mbs z_2(x) = \begin{bmatrix}
1 & \hat\varphi_0(x) & \hat\varphi_1(x) & \hat\varphi_2(x)
\end{bmatrix}, \\
& \hat\varphi_0(x)= \frac{b_1-x}{b_1-c}\left(1+ \frac{x-c}{b_1-c}\right), \quad
\hat\varphi_1(x) = \left(\frac{x-c}{b_1-c} \right)^2, \quad
\hat\varphi_2(x) = \frac{(x-c)(b_1-x)}{b_1-c}, \\
& \mbs M_4 = \begin{bmatrix}
0 & C(x,a_2)-g_2(x,a_2) & C(x,b_2)-g_2(x,b_2)\\
\alpha(y)-\kappa(y)-g_2(c,y) & -[C(c,a_2)-g_2(c,a_2)] & -[C(c,b_2)-g_2(c,b_2)]\\
C(b_1,y)-g_2(b_1,y) & -[C(b_1,a_2)-g_2(b_1,a_2)] & -[C(b_1,b_2)-g_2(b_1,b_2)] \\
\beta(y)-\kappa_1(y)-g_{2x}(c,y) & -[C_x(c,a_2)-g_{2x}(c,a_2)] & -[C_x(c,b_2)-g_{2x}(c,b_2)]
\end{bmatrix}, 
\end{split} \right.
\end{equation}
where $g_2(\mbs x)$ is an arbitrary (free) function.
Equations~\eqref{eq_50} and~\eqref{eq_52} provide the constrained expressions for $u_1(\mbs x)$ and $u_2(\mbs x)$ that  satisfy the constraints~\eqref{eq_34} and~\eqref{eq_36}. 

\begin{theorem}\label{thm_3}
(i) The functions $u_1(\mbs x)$ and $u_2(\mbs x)$ given by~\eqref{eq_50} and~\eqref{eq_52} satisfy the conditions~\eqref{eq_34} and~\eqref{eq_36}, for arbitrary functions $g_1(\mbs x)$, $g_2(\mbs x)$, $\hat\alpha(y)$ and $\hat\beta(y)$ therein. (ii) Any two functions $u_1(\mbs x)$ and $u_2(\mbs x)$ satisfying~\eqref{eq_34} and~\eqref{eq_36} can be expressed into~\eqref{eq_50} and~\eqref{eq_52}, for some $g_1(\mbs x)$,  $g_2(\mbs x)$, $\hat\alpha(y)$ and $\hat\beta(y)$.
\end{theorem}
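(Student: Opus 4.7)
The plan is to mirror the proof of Theorem~\ref{thm_2}, augmented to handle the derivative-matching interface condition. The basic picture is that the expressions~\eqref{eq_50} and~\eqref{eq_52} are the standard 2D constrained expressions of Section~\ref{sec_212} with the interface conditions~\eqref{eq_38} and~\eqref{eq_46} (on the $\Omega_1$ side) and~\eqref{eq_40} and~\eqref{eq_48} (on the $\Omega_2$ side) absorbed into free functions $\hat\alpha$ and $\hat\beta$ via~\eqref{eq_41} and~\eqref{eq_49}. Verification then reduces to checking point/derivative evaluations of the Hermite-type bases $\mbs z_1(x)$ and $\mbs z_2(x)$.

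For part (i), I would first confirm the reproducing properties of $\mbs z_1$ at $x=a_1,c$, namely $\varphi_0(a_1)=1$ with $\varphi_0(c)=\varphi_0'(c)=0$; $\varphi_1(a_1)=0,\ \varphi_1(c)=1,\ \varphi_1'(c)=0$; and $\varphi_2(a_1)=\varphi_2(c)=0,\ \varphi_2'(c)=1$ (and the mirror image for $\hat\varphi_i$ at $x=c,b_1$). Then on each of the four edges of $\partial\Omega_1$, the product $\mbs z_1(x)^T\mbs M_3\mbs v_2(y)$ collapses so that the $g_1$ contributions cancel and the prescribed data reproduce; the corner-subtraction entries of $\mbs M_3$, such as $-[C(a_1,a_2)-g_1(a_1,a_2)]$, exactly remove the double-counting at the corners of $\Omega_1$. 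Evaluating at $x=c$ yields $u_1(c,y)=\alpha(y)$ and $x$-differentiation at $x=c$ yields $\partial_x u_1(c,y)=\beta(y)$. By~\eqref{eq_41} and~\eqref{eq_49} these reduce to $C(c,\cdot)$ and $C_x(c,\cdot)$ at $y=a_2,b_2$, which together with the compatibility assumption $\kappa(a_2)=\kappa(b_2)=\kappa_1(a_2)=\kappa_1(b_2)=0$ makes the reduced boundary data for $u_2$ consistent with~\eqref{eq_34}. The analogous analysis for $u_2$ using $\mbs z_2(x)$ and $\mbs M_4$ gives $u_2(c,y)=\alpha(y)-\kappa(y)$ and $\partial_x u_2(c,y)=\beta(y)-\kappa_1(y)$, so~\eqref{eq_36} holds.

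For part (ii), given any $u_1,u_2$ satisfying~\eqref{eq_34} and~\eqref{eq_36}, I set $g_1=u_1$, $g_2=u_2$, $\hat\alpha(y)=u_1(c,y)$, and $\hat\beta(y)=\partial_x u_1|_{(c,y)}$. Since $u_1(c,a_2)=C(c,a_2)$ and $u_1(c,b_2)=C(c,b_2)$ by~\eqref{eq_34}, the switching corrections in~\eqref{eq_41} vanish and $\alpha(y)\equiv u_1(c,y)$; the same reasoning applied to~\eqref{eq_49} (using $\partial_x u_1|_{(c,a_2)}=C_x(c,a_2)$, which follows by differentiating the edge identity $u_1(x,a_2)=C(x,a_2)$ in $x$) gives $\beta(y)\equiv \partial_x u_1|_{(c,y)}$. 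Every entry of $\mbs M_3$ is then a difference that vanishes because $u_1$ satisfies~\eqref{eq_42},~\eqref{eq_38} and~\eqref{eq_46}, so~\eqref{eq_50} collapses to $u_1=g_1$; the analogous substitution into~\eqref{eq_52} succeeds using $u_2(c,y)=u_1(c,y)-\kappa(y)=\alpha(y)-\kappa(y)$ and the corresponding derivative relation from~\eqref{eq_36}.

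The main obstacle is bookkeeping rather than conceptual: the $4\times 3$ block $\mbs M_3$ contracted against a four-component Hermite vector in $x$ and a three-component TFC vector in $y$ produces twelve terms per edge, and one must track carefully which basis entries are active on each edge and how the corner-correction entries interact with the edge entries. The edge-by-edge cancellation for the $x$-derivative at $x=c$, where the vanishing values $\varphi_0'(c)=\varphi_1'(c)=0$ must conspire with the nonzero $\varphi_2'(c)=1$ to isolate the fourth row of $\mbs M_3$, is the one step where an arithmetic slip is easiest; the rest is essentially the Theorem~\ref{thm_2} verification lifted by one extra Hermite degree in $x$.
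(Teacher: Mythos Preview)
Your proposal is correct and follows exactly the strategy the paper indicates: the paper's proof of Theorem~\ref{thm_3} simply states that it ``follows the same strategy as that of Theorem~\ref{thm_2},'' and your argument is precisely that strategy lifted to accommodate the additional derivative condition, with the natural choice $\hat\beta(y)=\partial_x u_1|_{(c,y)}$ in part~(ii). Your more detailed edge-by-edge verification and the observation that $\partial_x u_1|_{(c,a_2)}=C_x(c,a_2)$ follows from differentiating the edge identity are exactly the bookkeeping the paper leaves implicit.
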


The proof of this theorem follows the same strategy as that of Theorem~\ref{thm_2}.
This theorem affirms that the expressions~\eqref{eq_50} and~\eqref{eq_52} are indeed the general forms for $u_1(\mbs x)$ and $u_2(\mbs x)$ that satisfy the conditions~\eqref{eq_34} and~\eqref{eq_36}.
Note that these forms contain four free functions $g_1(\mbs x)$, $g_2(\mbs x)$, $\hat\alpha(y)$, and $\hat\beta(y)$.


\begin{remark}

The expressions~\eqref{eq_44} and~\eqref{eq_45} provide the general forms for two functions respectively defined on two sub-domains that satisfy
the continuity condition~\eqref{eq_35} (of $C^0$ type) on the shared sub-domain boundary. Similarly, \eqref{eq_50} and~\eqref{eq_52} provide the general forms of the two functions that satisfy
the continuity condition~\eqref{eq_36} (of $C^1$ type) on the shared sub-domain boundary. These expressions  essentially define two $C^0$ or $C^1$ type ``elements''.  It is notable that these expressions contain a number of free functions, such as $g_1(\mbs x)$, $g_2(\mbs x)$, $\hat\alpha(y)$ and $\hat\beta(y)$, which can be arbitrary. By restricting these free functions to a particular  function space, one can attain a specific type of element. In particular, one has the freedom to choose function spaces other than the classical polynomial space,  leading to ``elements'' beyond the traditional sense.

\end{remark}

\subsection{Constructing Functionally Connected Elements (FCE)}

\subsubsection{Functionally Connected Elements in 1D}\label{sec_231}

Consider a 1D domain $\Omega=[a,b]$, which is partitioned into $N$ ($N\geqslant 1$) sub-domains or ``elements" by the points $X_i$ ($0\leqslant i\leqslant N$), with
$a=X_0<X_1<\dots<X_N=b$. We  refer to $\Omega_i=[X_i,X_{i+1}]$ ($0\leqslant i\leqslant N-1$) as the $i$-th element below. Consider a piece-wise function $u(x)$ ($x\in\Omega$) defined by,
\begin{equation}\label{eq_54}
u(x)=\left\{
\begin{array}{ll}
u_0(x), & x\in\Omega_0; \\
u_1(x), & x\in\Omega_1; \\
\cdots \\
u_{N-1}(x), & x\in\Omega_{N-1},
\end{array}
\right.
\end{equation}
where $u_i(x)$ is defined on $\Omega_i$ for $0\leqslant i\leqslant N-1$. We impose the following  conditions on $u(x)$ at the interior element boundaries:
\begin{equation}\label{eq_55}
u_i(X_{i+1}) = u_{i+1}(X_{i+1}), \quad 0\leqslant i\leqslant N-2;
\end{equation}
or
\begin{equation}\label{eq_56}
\left\{
\begin{split}
& u_i(X_{i+1}) = u_{i+1}(X_{i+1}), \\
& \left.\frac{du_i}{dx}\right|_{X_{i+1}} = \left.\frac{du_{i+1}}{dx}\right|_{X_{i+1}}, \quad 0\leqslant i\leqslant N-2.
\end{split}
\right.
\end{equation}
Equations~\eqref{eq_55} and~\eqref{eq_56} are the $C^0$ and $C^1$ continuity conditions, respectively. Those elements satisfying the $C^0$ (or $C^1$) continuity conditions will be referred to as $C^0$ (or $C^1$) elements.


We are interested in the general forms of $u(x)$ that satisfy the $C^0$ or $C^1$ continuity conditions. These general forms are constructed  below using the ideas from Section~\ref{sec_22}. We refer to the elements equipped with such general forms as Functionally Connected Elements (FCE).

\paragraph{$C^0$ FCEs}

Let us now construct the general form of  $u(x)$ defined in~\eqref{eq_54} satisfying the $C^0$ condition~\eqref{eq_55}. 
Let $u(X_i)=\alpha_i$ ($0\leqslant i\leqslant N$), where $\alpha_i$ ($0\leqslant i\leqslant N$) denote a set of free parameters. Then on $\Omega_i$, $u_i(x)$ satisfies the  conditions,
\begin{equation}\label{eq_59}
u_i(X_i) = \alpha_i, \quad
u_i(X_{i+1}) = \alpha_{i+1},
\quad 0\leqslant i\leqslant N-1.
\end{equation}
The general form for $u_i(x)$ satisfying these conditions is given by, based on Section~\ref{sec_211},
\begin{equation}\label{eq_60}
u_i(x) = g_i(x) + [\alpha_i - g_i(X_i)]\phi_{0}(X_i,X_{i+1},x)
+ [\alpha_{i+1}-g_i(X_{i+1})]\phi_{1}(X_i,X_{i+1},x),
\ x\in\Omega_i,
\ 0\leqslant i\leqslant N-1,
\end{equation}
where $g_i(x)$ is an arbitrary (free) function defined on $\Omega_i$. 

The function $u(x)$ defined by~\eqref{eq_54}, with $u_i(x)$ given by~\eqref{eq_60}, characterizes the 1D $C^0$ FCEs. It is notable that this function contains $N$ free functions $g_i(x)$ ($0\leqslant i\leqslant N-1$) and $(N+1)$ free parameters $\alpha_i$ ($0\leqslant i\leqslant N$). 


\begin{theorem}\label{thm_4}
(i) The functions $u_i(x)$ given by~\eqref{eq_60} satisfy the conditions~\eqref{eq_55}, for arbitrary $g_i(x)$ and arbitrary values of the parameters $\alpha_i$ therein. (ii) Any set of functions $u_i(x)$ ($0\leqslant i\leqslant N-1$) satisfying the conditions~\eqref{eq_55} can be expressed in terms of~\eqref{eq_60}, for some $g_i(x)$ ($0\leqslant i\leqslant N-1$) and some value of the parameters $\alpha_i$ ($0\leqslant i\leqslant N$).
\end{theorem}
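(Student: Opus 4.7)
The plan is to mirror the proof strategy used for Theorems~\ref{thm_1}, \ref{thm_2}, and~\ref{thm_3}, splitting the argument into a direct verification step for part (i) and an explicit choice of free data for part (ii). The entire argument rests on the interpolation property of the switching functions $\phi_0(a,b,x)$ and $\phi_1(a,b,x)$ from~\eqref{eq_a60}, which are the 1D analogues of what drove the proofs of the earlier theorems in this section.

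For part (i), the key observation is the nodal behavior of the switching functions: $\phi_0(X_i,X_{i+1},X_i)=1$, $\phi_0(X_i,X_{i+1},X_{i+1})=0$, $\phi_1(X_i,X_{i+1},X_i)=0$, and $\phi_1(X_i,X_{i+1},X_{i+1})=1$. Substituting $x=X_i$ and $x=X_{i+1}$ into~\eqref{eq_60} makes the bracketed terms collapse, yielding $u_i(X_i)=\alpha_i$ and $u_i(X_{i+1})=\alpha_{i+1}$ for every $0\leqslant i\leqslant N-1$, regardless of the choice of $g_i(x)$ or of $\alpha_i$. The $C^0$ condition~\eqref{eq_55} then reduces to $\alpha_{i+1}=\alpha_{i+1}$, which holds trivially. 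This completes the verification direction.

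For part (ii), given any collection of functions $u_i(x)$ on $\Omega_i$ satisfying~\eqref{eq_55}, the natural choice is to set $g_i(x)=u_i(x)$ on $\Omega_i$, and to define the $(N+1)$ free parameters by the common nodal values: $\alpha_0 = u_0(X_0)$, $\alpha_N = u_{N-1}(X_N)$, and $\alpha_i = u_{i-1}(X_i) = u_i(X_i)$ for $1\leqslant i\leqslant N-1$. Under these choices both bracketed coefficients $[\alpha_i - g_i(X_i)]$ and $[\alpha_{i+1}-g_i(X_{i+1})]$ in~\eqref{eq_60} vanish identically, so~\eqref{eq_60} reduces to $u_i(x)=g_i(x)=u_i(x)$, as required.

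There is no real obstacle; the only subtlety worth flagging is that the well-definedness of $\alpha_i$ at interior nodes is precisely where hypothesis~\eqref{eq_55} is consumed in part (ii) — without the $C^0$ condition the two candidate values $u_{i-1}(X_i)$ and $u_i(X_i)$ would disagree and no single parameter $\alpha_i$ could be assigned. Everything else is a direct computation using the Lagrange-type interpolation property of $\phi_0$ and $\phi_1$, so the proof can be written very compactly along the lines of the proof of Theorem~\ref{thm_1}.
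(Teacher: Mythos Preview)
Your proposal is correct and follows essentially the same approach as the paper's proof: part (i) by direct verification using the nodal properties of $\phi_0$ and $\phi_1$, and part (ii) by choosing $g_i=u_i$ and $\alpha_i$ equal to the nodal values (the paper writes $\alpha_i=u_i(X_i)$ for $0\leqslant i\leqslant N-1$ and $\alpha_N=u_{N-1}(b)$, which is the same assignment). Your added remark that the $C^0$ hypothesis~\eqref{eq_55} is precisely what guarantees the interior $\alpha_i$ are well-defined is a nice clarification not made explicit in the paper.
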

\begin{proof}
(i) By straightforward verification.
(ii) Suppose $u_i(x)$ ($0\leqslant i\leqslant N-1$) are a set of functions satisfying~\eqref{eq_55}. They can be expressed in the form~\eqref{eq_60}, by setting $g_i=u_i(x)$ ($0\leqslant i\leqslant N-1$), $\alpha_i=u_i(X_i)$ ($0\leqslant i\leqslant N-1$) and $\alpha_N=u_{N-1}(b)$ in~\eqref{eq_60}.
\end{proof}


This theorem affirms that~\eqref{eq_60} provides the general form of $C^0$ piece-wise functions defined on the domain.
Note that $g_i(x)$ therein can be arbitrary functions. To devise a computational technique, we restrict $g_i(x)$ to a finite-dimensional function space $\mathcal F_m(\Omega_i)$ defined by
\begin{equation}\label{eq_61}
\mathcal F_m(\Omega_i)=\text{span}\left\{
\Phi_{ij}(x),\ x\in\Omega_i,\ 1\leqslant j\leqslant m
\right\}, 
\quad 0\leqslant i\leqslant N-1,
\end{equation}
where $m$ denotes the dimension of $\mathcal F_m(\Omega_i)$ and $\Phi_{ij}(x)$ ($1\leqslant j\leqslant m$) are its bases. We require that the set of functions
\begin{equation}\label{eq_c61}
\left\{
\phi_0(X_i,X_{i+1},x),\ 
\phi_1(X_i,X_{i+1},x),\ 
\Phi_{i1}(x),\ \dots,\ 
\Phi_{im}(x)
\right\}, \quad x\in\Omega_i,
\end{equation}
be linearly independent for  $0\leqslant i\leqslant N-1$.
Let 
\begin{equation}\label{eq_62}
g_i(x) = \sum_{j=1}^m \hat g_{ij}\Phi_{ij}(x), 
\quad x\in\Omega_i, \quad
0\leqslant i\leqslant N-1,
\end{equation}
where $\hat g_{ij}$ are the expansion coefficients.
Then $u_i(x)$ in~\eqref{eq_60} is transformed into
\begin{equation}\label{eq_64}
\begin{split}
u_i(x) =& \sum_{j=1}^m\hat g_{ij}\left[
\Phi_{ij}(x) - \Phi_{ij}(X_i)\phi_0(X_i,X_{i+1},x) - \Phi_{ij}(X_{i+1})\phi_1(X_i,X_{i+1},x)
\right] \\
& + \alpha_i \phi_0(X_i,X_{i+1},x)
+ \alpha_{i+1} \phi_1(X_i,X_{i+1},x),
\quad x\in\Omega_i,
\quad 0\leqslant i\leqslant N-1.
\end{split}
\end{equation}
The function $u(x)$ defined by~\eqref{eq_54}, with $u_i(x)$ given by~\eqref{eq_64}, satisfies the $C^0$ continuity condition~\eqref{eq_55} automatically, irrespective of the choice for the bases $\Phi_{ij}(x)$. Once the basis functions $\Phi_{ij}(x)$ are specified, $\hat g_{ij}$ ($0\leqslant i\leqslant N-1$, $1\leqslant j\leqslant m$) and $\alpha_i$ ($0\leqslant i\leqslant N$) are the unknown coefficients to be determined.

\paragraph{$C^1$ FCEs}

Let us next consider the general form of $u(x)$ defined in~\eqref{eq_54} satisfying the $C^1$ conditions in~\eqref{eq_56}.
Let
\begin{subequations}\label{eq_a63}
\begin{align}
& u(X_i) = \alpha_i, \quad 0\leqslant i\leqslant N; \\
& \frac{du}{dx}(X_i) = \beta_i, \quad
0\leqslant i\leqslant N,
\end{align}
\end{subequations}
where $\alpha_i$ ($0\leqslant i\leqslant N$) and $\beta_i$ ($0\leqslant i\leqslant N$) are free parameters. Then on $\Omega_i$ the function $u_i(x)$ satisfies the following conditions,
\begin{subequations}\label{eq_a64}
\begin{align}
& u_i(X_i) = \alpha_i, \quad u_i(X_{i+1}) = \alpha_{i+1}, \\
& \frac{du_i}{dx}(X_i) = \beta_i, \quad \frac{du_i}{dx}(X_{i+1}) = \beta_{i+1}, \quad 0\leqslant i\leqslant N-1.
\end{align}
\end{subequations}
In light of Section~\ref{sec_211}, the general form of $u_i(x)$ satisfying these conditions are given by ($0\leqslant i\leqslant N-1$),
\begin{equation}\label{eq_66}
\begin{split}
u_i(x) =&\ g_i(x) + [\alpha_i - g_i(X_i)]\psi_0(X_i,X_{i+1},x)
+ [\alpha_{i+1} - g_i(X_{i+1})]\psi_1(X_i,X_{i+1},x) \\
&+ [\beta_i - g'_i(X_i)]\varphi_0(X_i,X_{i+1},x)
+ [\beta_{i+1} - g'_i(X_{i+1})]\varphi_1(X_i,X_{i+1},x),
\quad x\in\Omega_i,
\end{split}
\end{equation}
where $g_i(x)$ is an arbitrary (free) function, and $\psi_0$, $\psi_1$, $\varphi_0$ and $\varphi_1$ are defined in~\eqref{eq_67}.
The following result holds.

\begin{theorem}\label{thm_5}
(i) The functions $u_i(x)$ given by~\eqref{eq_66} satisfy the conditions in~\eqref{eq_56}, for arbitrary $g_i(x)$ and arbitrary values of the parameters $(\alpha_i,\beta_i)$ therein. (ii) Any set of functions $u_i(x)$ ($0\leqslant i\leqslant N-1$) satisfying~\eqref{eq_56} can be expressed in the form~\eqref{eq_66}, for some $g_i(x)$ ($0\leqslant i\leqslant N-1$) and some value of the parameters $(\alpha_i,\beta_i)$ ($0\leqslant i\leqslant N$).
\end{theorem}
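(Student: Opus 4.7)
The plan is to mirror the proof of Theorem~\ref{thm_4}, exploiting the Hermite-type endpoint properties of the switching functions defined in~\eqref{eq_67}. The key facts I would record up front (all verifiable by direct substitution into~\eqref{eq_a60} and~\eqref{eq_67}) are the Hermite interpolation identities
\begin{equation*}
\psi_0(X_i,X_{i+1},X_i)=1,\ \psi_0(X_i,X_{i+1},X_{i+1})=0,\ \psi_1(X_i,X_{i+1},X_i)=0,\ \psi_1(X_i,X_{i+1},X_{i+1})=1,
\end{equation*}
together with $\varphi_0$ and $\varphi_1$ vanishing at both $X_i$ and $X_{i+1}$, and the dual statements for derivatives: $\psi_0',\psi_1'$ vanish at the two endpoints, while $\varphi_0'(X_i)=1$, $\varphi_0'(X_{i+1})=0$, $\varphi_1'(X_i)=0$, $\varphi_1'(X_{i+1})=1$. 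These are exactly the properties that make the expression in~\eqref{eq_66} the Hermite-corrected form of $g_i$ matching the four conditions~\eqref{eq_a64}.

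For part (i), I would evaluate~\eqref{eq_66} at $x=X_i$ and $x=X_{i+1}$, applying the endpoint identities above: the first two correction terms collapse to $\alpha_i-g_i(X_i)$ and $\alpha_{i+1}-g_i(X_{i+1})$ at the appropriate endpoints, while the $\varphi$-terms contribute zero; this gives $u_i(X_i)=\alpha_i$ and $u_i(X_{i+1})=\alpha_{i+1}$. Differentiating~\eqref{eq_66} termwise and evaluating at the same endpoints gives $u_i'(X_i)=\beta_i$ and $u_i'(X_{i+1})=\beta_{i+1}$ by the dual identities. Since the same free parameters $\alpha_{i+1}$ and $\beta_{i+1}$ appear in both the $u_i$ expression (at its right endpoint) and the $u_{i+1}$ expression (at its left endpoint), the continuity conditions~\eqref{eq_56} follow immediately for $0\leqslant i\leqslant N-2$.

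For part (ii), given any $\{u_i\}_{i=0}^{N-1}$ satisfying~\eqref{eq_56}, I would define
\begin{equation*}
g_i(x):=u_i(x)\ (0\leqslant i\leqslant N-1),\quad
\alpha_i:=u_i(X_i),\ \beta_i:=u_i'(X_i)\ (0\leqslant i\leqslant N-1),\quad
\alpha_N:=u_{N-1}(b),\ \beta_N:=u_{N-1}'(b).
\end{equation*}
The only subtlety is that $\alpha_{i+1}$ and $\beta_{i+1}$ for $0\leqslant i\leqslant N-2$ must simultaneously equal $u_i(X_{i+1})$ and $u_i'(X_{i+1})$ in the formula for $u_i$, and $u_{i+1}(X_{i+1})$ and $u_{i+1}'(X_{i+1})$ in the formula for $u_{i+1}$; this consistency is precisely what~\eqref{eq_56} guarantees, so the definitions are well-posed. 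With these choices, every bracketed coefficient $[\alpha_i-g_i(X_i)]$, $[\alpha_{i+1}-g_i(X_{i+1})]$, $[\beta_i-g_i'(X_i)]$, $[\beta_{i+1}-g_i'(X_{i+1})]$ in~\eqref{eq_66} vanishes, leaving $u_i(x)=g_i(x)$ on each $\Omega_i$.

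No real obstacle is expected: the argument is entirely parallel to Theorem~\ref{thm_4}, with the Hermite endpoint identities of $\{\psi_0,\psi_1,\varphi_0,\varphi_1\}$ playing the role that the Lagrange identities of $\{\phi_0,\phi_1\}$ played there. The one point requiring slight care, and worth stating explicitly in the proof, is the compatibility of the shared parameters $(\alpha_{i+1},\beta_{i+1})$ across neighboring elements in part (ii); beyond this, the verification is routine substitution.
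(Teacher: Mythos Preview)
Your proposal is correct and follows essentially the same approach as the paper, which simply states that the proof ``follows the same strategy as that of Theorem~\ref{thm_4}.'' Your explicit recording of the Hermite endpoint identities for $\psi_0,\psi_1,\varphi_0,\varphi_1$ and your remark on the compatibility of the shared parameters $(\alpha_{i+1},\beta_{i+1})$ across neighboring elements are helpful elaborations, but the underlying argument---direct verification for (i), and the choice $g_i=u_i$, $\alpha_i=u_i(X_i)$, $\beta_i=u_i'(X_i)$ for (ii)---is exactly what the paper intends.
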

\noindent The proof of this theorem follows the same strategy as that of Theorem~\ref{thm_4}.

To arrive at a computational technique, we restrict $g_i(x)$ to the function space $\mathcal F_m(\Omega_i)$ defined by~\eqref{eq_61}, where the basis functions are such that the set
\begin{equation}\label{eq_c67}
\{\  
\psi_0(X_i,X_{i+1},x), \psi_1(X_i,X_{i+1},x), \varphi_0(X_i,X_{i+1}, x), \varphi_1(X_i,X_{i+1},x), \Phi_{i1}(x), \dots, \Phi_{im}(x)
\ \}, \ x\in\Omega_i,
\end{equation}
is linearly independent for $0\leqslant i\leqslant N-1$.
 Then the expression~\eqref{eq_66} is transformed into
\begin{equation}\label{eq_68}
\begin{split}
 u_i(x) =& \sum_{j=1}^m \hat g_{ij}\left[ 
\Phi_{ij}(x) - \Phi_{ij}(X_i)\psi_0(X_i,X_{i+1},x) - \Phi_{ij}(X_{i+1})\psi_1(X_i,X_{i+1},x) \right. \\
& \qquad\quad
\left.
-\Phi'_{ij}(X_i)\varphi_0(X_i,X_{i+1},x) - \Phi'_{ij}(X_{i+1})\varphi_1(X_i,X_{i+1},x)
\right] \\
& + \alpha_i\psi_0(X_i,X_{i+1},x)
+ \alpha_{i+1} \psi_1(X_i,X_{i+1},x)
+ \beta_i\varphi_0(X_i,X_{i+1},x)
+ \beta_{i+1} \varphi_1(X_i,X_{i+1},x), \\
& x\in\Omega_i, \quad 0\leqslant i\leqslant N-1,
\end{split}
\end{equation}
where we have used the expansion~\eqref{eq_62}. 
In~\eqref{eq_68}, $\hat g_{ij}$ ($0\leqslant i\leqslant N-1$, $1\leqslant j\leqslant m$), $\alpha_i$ ($0\leqslant i\leqslant N$), and $\beta_i$ ($0\leqslant i\leqslant N$) are the unknown parameters to be determined.

\subsubsection{Functionally Connected Elements in 2D}\label{sec_232}

Consider the domain $\Omega = [a_1,b_1]\times[a_2,b_2]\subset\mbb R^2$, 
and suppose it is partitioned into $N_x$ elements along the $x$ direction by the points $a_1=X_0<X_1<\cdots<X_{N_x}=b_1$,
and into $N_y$ elements along the $y$ direction by the points $a_2=Y_0<Y_1<\cdots<Y_{N_y}=b_2$. Let $\Omega_{ij}=[X_i,X_{i+1}]\times[Y_j,Y_{j+1}]$ denote the element with the index $e_{ij}=iN_y+j$, for $0\leqslant i\leqslant N_x-1$ and $0\leqslant j\leqslant N_y-1$.

Consider a piece-wise function $u(\mbs x)$ ($\mbs x=(x,y)\in\Omega$) defined by
\begin{equation}\label{eq_69}
u(\mbs x) = \left\{
\begin{array}{ll}
u_{0}(\mbs x), & \mbs x\in\Omega_{00}, \\
u_1(\mbs x), & \mbs x\in\Omega_{01},\\
\cdots \\
u_{e_{ij}}(\mbs x), & \mbs x\in\Omega_{ij}, \\
\cdots \\
u_{N-1}(\mbs x), & \mbs x\in\Omega_{N_x-1,N_y-1},
\end{array}
\right.
\end{equation}
where $N=N_xN_y$.
We aim for the general forms of $u(\mbs x)$ satisfying $C^0$ or $C^1$ continuity conditions across the elements.

\paragraph{$C^0$ FCEs} 

We impose the following $C^0$ continuity conditions across the elements,
\begin{subequations}\label{eq_71}
\begin{align}
& u_{e_{ij}}(X_{i+1},y) = u_{e_{i+1,j}}(X_{i+1},y), \quad
y\in[Y_j,Y_{j+1}], \quad
0\leqslant (i,j)\leqslant (N_x-2,N_y-1); \\
& u_{e_{ij}}(x,Y_{j+1}) = u_{e_{i,j+1}}(x,Y_{j+1}), \quad
x\in [X_i,X_{i+1}], \quad
0\leqslant (i,j)\leqslant (N_x-1,N_y-2).
\end{align}
\end{subequations}
Here ``$0\leqslant (i,j)\leqslant (n_1,n_2)$" or ``$(0,0)\leqslant (i,j)\leqslant (n_1,n_2)$" denotes $0\leqslant i\leqslant n_1$ and $0\leqslant j\leqslant n_2$. We will employ these and similar notations hereafter whenever it is convenient.
We are interested in the general form of $u(\mbs x)$ defined in~\eqref{eq_69} satisfying the conditions~\eqref{eq_71}. 


Let
\begin{equation}
u(X_i,Y_j) = \hat\alpha_{ij}, \quad 0\leqslant (i,j)\leqslant (N_x,N_y),
\end{equation}
where $\hat\alpha_{ij}$ 
are parameters to be determined. 
Let
\begin{subequations}
\begin{align}
& u(X_i,y) = G_{ij}(y), \quad y\in[Y_j,Y_{j+1}], \quad 0\leqslant (i,j)\leqslant (N_x,N_y-1), \\
& u(x,Y_j) = H_{ij}(x), \quad x\in[X_i, X_{i+1}], \quad
0\leqslant (i,j)\leqslant (N_x-1,N_y).
\end{align}
\end{subequations}
Here $G_{ij}(y)$ are functions to be determined that satisfy
\begin{align}
& G_{ij}(Y_j) = \hat\alpha_{ij}, \quad G_{ij}(Y_{j+1}) = \hat\alpha_{i,j+1},
\quad 0\leqslant (i,j)\leqslant(N_x, N_y-1). \label{eq_74a} 
\end{align}
$H_{ij}(x)$ are functions to be determined that satisfy
\begin{align}
& H_{ij}(X_i) = \hat\alpha_{ij}, \quad H_{ij}(X_{i+1}) = \hat\alpha_{i+1,j}, \quad
0\leqslant (i,j)\leqslant (N_x-1,N_y). \label{eq_75a} 
\end{align}


In light of Section~\ref{sec_211}, the general form of $G_{ij}(y)$  that satisfies~\eqref{eq_74a} is given by, for $0\leqslant (i,j)\leqslant (N_x,N_y-1)$,
\begin{equation}\label{eq_76}
G_{ij}(y) = \hat G_{ij}(y) + [\hat\alpha_{ij} - \hat G_{ij}(Y_j)]\phi_0(Y_j,Y_{j+1},y) + [\hat\alpha_{i,j+1}-\hat G_{ij}(Y_{j+1})]\phi_1(Y_j,Y_{j+1},y),
\ y\in[Y_j,Y_{j+1}],
\end{equation}
where $\hat G_{ij}(y)$ is an arbitrary (free) function.
Similarly, the general form of $H_{ij}(x)$ satisfying~\eqref{eq_75a} is given by, for $0\leqslant (i,j)\leqslant (N_x-1,N_y)$,
\begin{equation}\label{eq_77}
H_{ij}(x) = \hat H_{ij}(x) + [\hat\alpha_{ij} - \hat H_{ij}(X_i)]\phi_0(X_i,X_{i+1},x) + [\hat\alpha_{i+1,j}-\hat H_{ij}(X_{i+1})]\phi_1(X_i,X_{i+1},x),
\ x\in[X_i,X_{i+1}],
\end{equation}
where $\hat H_{ij}(x)$ is an arbitrary (free) function.

With the above settings, one can note that $u_{e_{ij}}(x,y)$ on $\Omega_{ij}$ satisfies the following conditions, for $ 0\leqslant (i,j)\leqslant (N_x-1,N_y-1)$,
\begin{equation}\left\{
\begin{split}
&
u_{e_{ij}}(X_i,y) = G_{ij}(y), \quad
u_{e_{ij}}(X_{i+1},y) = G_{i+1,j}(y), \quad y\in[Y_j,Y_{j+1}], \\
&
u_{e_{ij}}(x,Y_i) = H_{ij}(x), \quad u_{e_{ij}}(x,Y_{j+1}) = H_{i,j+1}(x), \quad x\in[X_i,X_{i+1}],
\end{split}
\right.
\end{equation}
where $G_{ij}(y)$ and $H_{ij}(x)$ are given by~\eqref{eq_76} and~\eqref{eq_77}, respectively.
In light of  Section~\ref{sec_212}, the general form of $u_{e_{ij}}(x,y)$ satisfying these conditions is given by,
\begin{equation}\label{eq_79}
u_{e_{ij}}(x,y) = g_{e_{ij}}(x,y)
- \mbs v_1^T(x) \mbs M_g \mbs v_2(y) + \mbs v_1^T(x)\mbs M_b \mbs v_2(y), \quad
0\leqslant (i,j)\leqslant (N_x-1,N_y-1),
\end{equation}
where $g_{e_{ij}}(x,y)$ is an arbitrary (free) function, and
\begin{align}
& \mbs M_g = \begin{bmatrix}
0 & g_{e_{ij}}(x,Y_j) & g_{e_{ij}}(x,Y_{j+1}) \\
g_{e_{ij}}(X_i,y) & -g_{e_{ij}}(X_i,Y_j) & -g_{e_{ij}}(X_i,Y_{j+1})\\
g_{e_{ij}}(X_{i+1},y) & -g_{e_{ij}}(X_{i+1},Y_j) & -g_{e_{ij}}(X_{i+1},Y_{j+1})
\end{bmatrix},
\quad \mbs v_1(x)=\begin{bmatrix}
1 \\ \phi_0(X_i,X_{i+1},x) \\ \phi_1(X_i,X_{i+1},x)
\end{bmatrix}, \\
& \mbs M_b = \begin{bmatrix}
0 & H_{ij}(x) & H_{i,j+1}(x) \\
G_{ij}(y) & -\hat\alpha_{ij} & -\hat\alpha_{i,j+1} \\
G_{i+1,j}(y) & -\hat\alpha_{i+1,j} & -\hat\alpha_{i+1,j+1}
\end{bmatrix},
\quad \mbs v_2(y)=\begin{bmatrix}
1 \\ \phi_0(Y_j,Y_{j+1},y) \\ \phi_1(Y_j,Y_{j+1},y)
\end{bmatrix}.
\end{align}


\begin{theorem}\label{thm_6}
(i) The functions $u_{e_{ij}}(\mbs x)$ given by~\eqref{eq_79}, with $G_{ij}(y)$ and $H_{ij}(x)$ therein given by~\eqref{eq_76} and~\eqref{eq_77}, satisfy the conditions~\eqref{eq_71}, for arbitrary functions $g_{e_{ij}}(\mbs x)$, $\hat G_{ij}(y)$ and $\hat H_{ij}(x)$, and arbitrary values of the parameters $\hat\alpha_{ij}$ involved therein.
(ii) Any piece-wise function $u(\mbs x)$ defined by~\eqref{eq_69} that satisfies the conditions~\eqref{eq_71} 
can be expressed in  the form~\eqref{eq_79}, for some $g_{e_{ij}}(\mbs x)$, $\hat G_{ij}(y)$ and $\hat H_{ij}(x)$, and some value of the parameters $\hat\alpha_{ij}$ therein.
\end{theorem}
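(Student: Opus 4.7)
The plan is to mirror the two-step strategy already used in Theorems~\ref{thm_2} and~\ref{thm_4}: verify part~(i) by direct substitution using the construction of Section~\ref{sec_212}, and establish part~(ii) by exhibiting explicit choices of the free data so that the constructed expression collapses onto the given piecewise function.

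For~(i), I would first observe that for each fixed $(i,j)$ the expression~\eqref{eq_79} is exactly the 2D constrained expression of Section~\ref{sec_212} applied on $\Omega_{ij}$, in which the four edge traces are prescribed to be $G_{ij}(y)$, $G_{i+1,j}(y)$, $H_{ij}(x)$ and $H_{i,j+1}(x)$. Before invoking that result I need the four prescribed traces to be corner-compatible: a direct substitution of $y=Y_j$ in~\eqref{eq_76} and $x=X_i$ in~\eqref{eq_77} gives $G_{ij}(Y_j)=\hat\alpha_{ij}=H_{ij}(X_i)$ (and analogously at the three other corners of $\Omega_{ij}$), so compatibility holds by construction. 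Consequently Section~\ref{sec_212} yields $u_{e_{ij}}(X_i,y)=G_{ij}(y)$, $u_{e_{ij}}(X_{i+1},y)=G_{i+1,j}(y)$, $u_{e_{ij}}(x,Y_j)=H_{ij}(x)$ and $u_{e_{ij}}(x,Y_{j+1})=H_{i,j+1}(x)$. The continuity~\eqref{eq_71} then follows because the \emph{same} function $G_{i+1,j}(y)$ serves as the right trace of $\Omega_{ij}$ and as the left trace of $\Omega_{i+1,j}$; an identical remark applies to $H_{i,j+1}(x)$ on the horizontal interfaces.

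For~(ii), given a piecewise function $\omega(\mbs x)$ with components $\omega_{e_{ij}}$ on $\Omega_{ij}$ satisfying~\eqref{eq_71}, I would set $\hat\alpha_{ij}=\omega(X_i,Y_j)$, $\hat G_{ij}(y)=\omega(X_i,y)$ on $[Y_j,Y_{j+1}]$, $\hat H_{ij}(x)=\omega(x,Y_j)$ on $[X_i,X_{i+1}]$, and $g_{e_{ij}}(\mbs x)=\omega_{e_{ij}}(\mbs x)$. Under these choices the correction terms in~\eqref{eq_76} and~\eqref{eq_77} vanish because $\hat G_{ij}(Y_j)=\omega(X_i,Y_j)=\hat\alpha_{ij}$ and likewise at the other endpoints, so $G_{ij}(y)=\omega(X_i,y)$ and $H_{ij}(x)=\omega(x,Y_j)$. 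An entry-by-entry comparison then shows $\mbs M_b=\mbs M_g$: both matrices simply collect the traces of $\omega_{e_{ij}}$ on the four sides of $\Omega_{ij}$ and the values of $\omega$ at the four corners. The two $\mbs v_1^T\mbs M\,\mbs v_2$ terms in~\eqref{eq_79} therefore cancel and $u_{e_{ij}}=g_{e_{ij}}=\omega_{e_{ij}}$, as required.

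The only point that needs care is the well-definedness of the choices in~(ii): one has to know that $\omega$, being merely piecewise but $C^0$-continuous across interior interfaces, does admit single-valued restrictions $\hat G_{ij}$ and $\hat H_{ij}$ on each interior edge and a single-valued corner value $\hat\alpha_{ij}$ at each interior node. This is exactly the content of~\eqref{eq_71}, which guarantees that these data do not depend on which incident element is used to evaluate $\omega$. Beyond this bookkeeping I anticipate no real obstacle; the argument is a direct 2D analogue of the proof of Theorem~\ref{thm_4}, with the added layer that interior edges (not just interior nodes) carry free functions in addition to free parameters.
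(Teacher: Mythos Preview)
Your proposal is correct and follows essentially the same approach as the paper: part~(i) by direct verification (the paper merely calls this ``straightforward, albeit cumbersome''), and part~(ii) by the identical explicit choices $g_{e_{ij}}=u_{e_{ij}}$, $\hat G_{ij}(y)=u(X_i,y)$, $\hat H_{ij}(x)=u(x,Y_j)$, $\hat\alpha_{ij}=u(X_i,Y_j)$. Your added remarks on corner compatibility in~(i) and on the well-definedness of the edge and vertex data in~(ii) are exactly the bookkeeping the paper leaves implicit.
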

\begin{proof}
(i) The proof follows from a straightforward (albeit quite cumbersome) verification.
(ii) Suppose $u(\mbs x)$ is a piece-wise function that is defined by~\eqref{eq_69} and satisfies the conditions~\eqref{eq_71}. The $u_{e_{ij}}(\mbs x)$ functions in~\eqref{eq_69} can be written into the form~\eqref{eq_79}, by setting 
$g_{e_{ij}}(\mbs x) = u_{e_{ij}}(\mbs x)$, $\hat G_{ij}(y)=u(X_i,y)$ ($y\in[Y_i,Y_{j+1}]$), $\hat H_{ij}(x)=u(x,Y_j)$ ($x\in[X_i,X_{i+1}]$), and  $\hat\alpha_{ij}=u(X_i,Y_j)$ in~\eqref{eq_79},~\eqref{eq_76} and~\eqref{eq_77}.
\end{proof}

To arrive at a computational technique, we restrict each of the free functions $g_{e_{ij}}(\mbs x)$ 
 for $0\leqslant (i,j)\leqslant (N_x-1,N_y-1)$, $\hat G_{ij}(y)$ for $0\leqslant (i,j)\leqslant (N_x,N_y-1)$, and $\hat H_{ij}(x)$ for $0\leqslant (i,j)\leqslant (N_x-1,N_y)$ to a finite-dimensional function space.
Define the function space
\begin{equation}\label{eq_82}
\mathcal{G}(\Omega_{ij},\mathcal M) = \text{span}\left\{\ 
\Psi_{ij,k}(\mbs x), \ \mbs x\in\Omega_{ij},\ 1\leqslant k\leqslant \mathcal M
\ \right\}, \quad 
\text{for}\ 0\leqslant (i,j)\leqslant (N_x-1,N_y-1)
\end{equation}
where $\mathcal M\in \mbb N$ is the dimension of $\mathcal G(\Omega_{ij},\mathcal M)$ and $\Psi_{ij,k}(x)$ ($0\leqslant k\leqslant \mathcal M$) are its bases.
We restrict the function $g_{e_{ij}}(\mbs x)$ to $\mathcal G(\Omega_{ij},\mathcal M)$ for $0\leqslant (i,j)\leqslant (N_x-1,N_y-1)$.
 We restrict $\hat G_{ij}(y)$ to the function space $\mathcal F_m([Y_j,Y_{j+1}])$ and $\hat H_{ij}(x)$ to the function space $\mathcal F_m([X_i,X_{i+1}])$, where $\mathcal F_m$ is defined in~\eqref{eq_61}. 
Let
\begin{subequations}\label{eq_83}
\begin{align}
& g_{e_{ij}}(\mbs x) = \sum_{k=1}^{\mathcal M} \hat g_{e_{ij},k} \Psi_{ij,k}(\mbs x), \quad \mbs x\in\Omega_{ij}, \quad 0\leqslant (i,j)\leqslant (N_x-1,N_y-1); \label{eq_83a} \\
& \hat G_{ij}(y) = \sum_{k=1}^m \mathscr{\hat G}_{ij,k} \Phi_{jk}(y), \quad y\in[Y_j,Y_{j+1}], \quad 0\leqslant (i,j)\leqslant (N_x,N_y-1); \label{eq_83b} \\
& \hat H_{ij}(x) = \sum_{k=1}^m \mathscr{\hat H}_{ij,k} \Phi_{ik}(x), \quad x\in[X_i,X_{i+1}], \quad 0\leqslant (i,j)\leqslant (N_x-1,N_y). \label{eq_83c}
\end{align}
\end{subequations}
Here $\hat g_{e_{ij},k}$, $\mathscr{\hat G}_{ij,k}$ and $\mathscr{\hat H}_{ij,k}$ are the expansion coefficients.

The equations~\eqref{eq_76} and~\eqref{eq_83b} characterize the functions $G_{ij}(y)$, which are formulated in terms of  the parameters $\hat\alpha_{ij}$ and $\mathscr{\hat G}_{ij,k}$. 
The equations~\eqref{eq_77} and~\eqref{eq_83c} characterize the functions $H_{ij}(x)$, which are formulated in terms of the parameters $\hat\alpha_{ij}$ and $\mathscr{\hat H}_{ij,k}$. Substitution of these parameterized forms of $G_{ij}(y)$ and $H_{ij}(x)$, together with equation~\eqref{eq_83a}, into equation~\eqref{eq_79} provide the parameterized form of $u_{e_{ij}}(\mbs x)$. In this parameterized form, $\hat\alpha_{ij}$ for $0\leqslant (i,j)\leqslant (N_x,N_y)$, $\hat g_{e_{ij},k}$ for $(0,0,1)\leqslant (i,j,k)\leqslant (N_x-1,N_y-1,\mathcal M)$, $\mathscr{\hat G}_{ij,k}$ for $(0,0,1)\leqslant (i,j,k)\leqslant(N_x,N_y-1,m)$, and $\mathscr{\hat H}_{ij,k}$ for $(0,0,1)\leqslant (i,j,k)\leqslant (N_x-1,N_y,m)$ are the unknown parameters to be determined.

\paragraph{$C^1$ FCEs}

Consider the following $C^1$ continuity conditions across the elements,
\begin{subequations}\label{eq_84}
\begin{align}
& u_{e_{ij}}(X_{i+1},y) = u_{e_{i+1,j}}(X_{i+1},y), \quad
y\in[Y_j,Y_{j+1}], \quad
0\leqslant (i,j)\leqslant (N_x-2,N_y-1); \\
& u_{e_{ij}}(x,Y_{j+1}) = u_{e_{i,j+1}}(x,Y_{j+1}), \quad
x\in [X_i,X_{i+1}], \quad
0\leqslant (i,j)\leqslant (N_x-1,N_y-2); \\
& \left.\frac{\partial u_{e_{ij}}}{\partial x} \right|_{(X_{i+1},y)} = \left.\frac{\partial u_{e_{i+1,j}}}{\partial x} \right|_{(X_{i+1},y)}, \quad y\in[Y_j,Y_{j+1}], \quad 0\leqslant (i,j)\leqslant (N_x-2,N_y-1); \\
& \left.\frac{\partial u_{e_{ij}}}{\partial y} \right|_{(x,Y_{j+1})} = \left.\frac{\partial u_{e_{i,j+1}}}{\partial y} \right|_{(x,Y_{j+1})}, \quad x\in[X_i,X_{i+1}], \quad 0\leqslant (i,j)\leqslant (N_x-1,N_y-2).
\end{align}
\end{subequations}
We are interested in the general form of $u(\mbs x)$ defined in~\eqref{eq_69} that satisfies the conditions~\eqref{eq_84}.

Let
\begin{equation}\left\{
\begin{split}
& u(X_i,Y_j) = \hat \alpha_{ij}, \quad
 \left.\frac{\partial u}{\partial x}\right|_{(X_i,Y_j)} = \hat\beta_{ij}^{(1)}, \quad 
\left.\frac{\partial u}{\partial y}\right|_{(X_i,Y_j)} = \hat\beta_{ij}^{(2)}, \\
& \left.\frac{\partial^2 u}{\partial x\partial y}\right|_{(X_i,Y_j)} = \hat\gamma_{ij}, \quad
 0\leqslant (i,j)\leqslant (N_x,N_y), 
\end{split} \right.
\end{equation}
where $\hat\alpha_{ij}$, $\hat\beta^{(1)}_{ij}$, $\hat\beta^{(2)}_{ij}$ and $\hat\gamma_{ij}$ are  parameters to be determined. 

In light of the continuity conditions~\eqref{eq_84}, let
\begin{subequations}
\begin{align}
& u(X_i,y) = G_{ij}^{(1)}(y), \quad y\in[Y_j,Y_{j+1}], \quad 0\leqslant (i,j)\leqslant (N_x,N_y-1); \\
& \left.\frac{\partial u}{\partial x}\right|_{(X_i,y)} = G^{(2)}_{ij}(y), \quad y\in[Y_j,Y_{j+1}], \quad 0\leqslant (i,j)\leqslant (N_x,N_y-1); \\
& u(x,Y_j) = H^{(1)}_{ij}(x), \quad x\in[X_i,X_{i+1}], \quad 0\leqslant(i,j)\leqslant (N_x-1,N_y); \\
& \left.\frac{\partial u}{\partial y}\right|_{(x,Y_j)} = H^{(2)}_{ij}(x), \quad x\in[X_i,X_{i+1}], \quad 0\leqslant (i,j)\leqslant (N_x-1,N_y).
\end{align}
\end{subequations}
Here $G^{(1)}_{ij}(y)$, $G^{(2)}_{ij}(y)$, $H^{(1)}_{ij}(y)$, and $G^{(2)}_{ij}(y)$ are functions to be determined. In addition, $G^{(1)}_{ij}(y)$ satisfies the following conditions,
\begin{subequations}\label{eq_88}
\begin{align}
& G^{(1)}_{ij}(Y_j) = \hat\alpha_{ij},
\quad G^{(1)}_{ij}(Y_{j+1}) = \hat\alpha_{i,j+1},
\quad 0\leqslant(i,j)\leqslant(N_x,N_y-1); \label{eq_88a} \\
& \left.\frac{dG^{(1)}_{ij}}{dy}\right|_{Y_j} = \hat\beta^{(2)}_{ij}, \quad \left.\frac{dG^{(1)}_{ij}}{dy}\right|_{Y_{j+1}} = \hat\beta^{(2)}_{i,j+1}, \quad 0\leqslant(i,j)\leqslant (N_x,N_y-1). \label{eq_88c}
\end{align}
\end{subequations}
$G^{(2)}_{ij}(y)$ satisfies the following conditions,
\begin{subequations}\label{eq_89}
\begin{align}
& G^{(2)}_{ij}(Y_j) = \hat\beta^{(1)}_{ij}, \quad G^{(2)}_{ij}(Y_{j+1}) = \hat\beta^{(1)}_{i,j+1}, \quad 0\leqslant (i,j)\leqslant(N_x,N_y-1); \\
& \left.\frac{dG^{(2)}_{ij}}{dy}\right|_{Y_j} = \hat\gamma_{ij}, \quad \left.\frac{dG^{(2)}_{ij}}{dy}\right|_{Y_{j+1}} = \hat\gamma_{i,j+1}, \quad 0\leqslant (i,j)\leqslant(N_x,N_y-1).
\end{align}
\end{subequations}
$H^{(1)}_{ij}(x)$ satisfies the following conditions,
\begin{subequations}\label{eq_90}
\begin{align}
& H^{(1)}_{ij}(X_i) = \hat\alpha_{ij}, \quad H^{(1)}_{ij}(X_{i+1}) = \hat\alpha_{i+1,j}, \quad 0\leqslant(i,j)\leqslant (N_x-1,N_y); \label{eq_90a} \\
& \left.\frac{dH_{ij}^{(1)}}{dx}\right|_{X_i} = \hat\beta^{(1)}_{ij}, \quad \left.\frac{dH_{ij}^{(1)}}{dx}\right|_{X_{i+1}} = \hat\beta^{(1)}_{i+1,j}, \quad 0\leqslant (i,j)\leqslant (N_x-1,N_y). \label{eq_90c}
\end{align}
\end{subequations}
$H^{(2)}_{ij}(x)$ satisfies the following conditions,
\begin{subequations}\label{eq_91}
\begin{align}
& H^{(2)}_{ij}(X_i) = \hat\beta^{(2)}_{ij}, \quad H^{(2)}_{ij}(X_{i+1}) = \hat\beta^{(2)}_{i+1,j}, \quad 0\leqslant(i,j)\leqslant(N_x-1,N_y); \\
& \left.\frac{dH^{(2)}_{ij}}{dx} \right|_{X_i} = \hat\gamma_{ij}, \quad \left.\frac{dH^{(2)}_{ij}}{dx} \right|_{X_{i+1}} = \hat\gamma_{i+1,j}, \quad 0\leqslant(i,j)\leqslant(N_x-1,N_y).
\end{align}
\end{subequations}


In light of the discussions in Sections~\ref{sec_211} and~\ref{sec_231}, the general form for $G^{(1)}_{ij}(y)$ that satisfies~\eqref{eq_88a} and~\eqref{eq_88c} is given by
\begin{equation}\label{eq_92}
\begin{split}
 G^{(1)}_{ij}(y) =& \hat G^{(1)}_{ij}(y) + \left[\hat\alpha_{ij} - \hat G_{ij}^{(1)}(Y_j)\right]\psi_0(Y_i,Y_{j+1},y) +
\left[\hat\alpha_{i,j+1} - \hat G^{(1)}_{ij}(Y_{j+1})\right]\psi_1(Y_j,Y_{j+1},y) \\
&+ \left[\hat\beta^{(2)}_{ij} - \hat G^{(1)}_{ij,y}(X_i)\right]\varphi_0(Y_j,Y_{j+1},y)
+ \left[\beta^{(2)}_{i,j+1} - \hat G^{(1)}_{ij,y}(Y_{j+1})\right]\varphi_1(Y_j,Y_{j+1},y), \\
& \text{for}\ (1,0)\leqslant(i,j)\leqslant (N_x-1,N_y-1),
\end{split}
\end{equation}
where $\hat G^{(1)}_{ij}(y)$ is an arbitrary (free) function, $\hat G^{(1)}_{ij,y}$ denotes its derivative, and $\psi_0$, $\psi_1$, $\varphi_0$ and $\varphi_1$ are defined in~\eqref{eq_67}.
The general form of $G^{(2)}_{ij}(y)$ that satisfies~\eqref{eq_89} is given by,
\begin{equation}\label{eq_93}
\begin{split}
 G^{(2)}_{ij}(y) =& \hat G^{(2)}_{ij}(y) + \left[\hat\beta^{(1)}_{ij} - \hat G_{ij}^{(2)}(Y_j)\right]\psi_0(Y_i,Y_{j+1},y) +
\left[\hat\beta^{(1)}_{i,j+1} - \hat G^{(2)}_{ij}(Y_{j+1})\right]\psi_1(Y_j,Y_{j+1},y) \\
&+ \left[\hat\gamma_{ij} - \hat G^{(2)}_{ij,y}(X_i)\right]\varphi_0(Y_j,Y_{j+1},y)
+ \left[\gamma_{i,j+1} - \hat G^{(2)}_{ij,y}(Y_{j+1})\right]\varphi_1(Y_j,Y_{j+1},y), \\
& \text{for}\ 0\leqslant(i,j)\leqslant (N_x,N_y-1),
\end{split}
\end{equation}
where $\hat G^{(2)}_{ij}(y)$ is an  arbitrary (free) function, and $\hat G^{(2)}_{ij,y}$ denotes its derivative.
The general form of $H^{(1)}_{ij}(x)$ that satisfies~\eqref{eq_90a} and~\eqref{eq_90c} is given by
\begin{equation}\label{eq_94}
\begin{split}
H^{(1)}_{ij}(x) =& \hat H^{(1)}_{ij}(x) + \left[\hat\alpha_{ij} - \hat H^{(1)}_{ij}(X_i) \right]\psi_0(X_i,X_{i+1},x)
+ \left[\hat\alpha_{i+1,j} - \hat H^{(1)}_{ij}(X_{i+1}) \right]\psi_1(X_i,X_{i+1},x) \\
&+ \left[\hat\beta^{(1)}_{ij} - \hat H^{(1)}_{ij,x}(X_i) \right]\varphi_0(X_i,X_{i+1},x)
+ \left[\hat\beta^{(1)}_{i+1,j} - \hat H^{(1)}_{ij,x}(X_{i+1}) \right]\varphi_1(X_i,X_{i+1},x), \\
& \text{for}\ (0,1)\leqslant(i,j)\leqslant(N_x-1,N_y-1),
\end{split}
\end{equation}
where $\hat H^{(1)}_{ij}(x)$ is an arbitrary (free) function and $\hat H^{(1)}_{ij,x}$ denotes its derivative.
The general form of $\hat H^{(2)}_{ij}(x)$ that satisfies~\eqref{eq_91} is given by
\begin{equation}\label{eq_95}
\begin{split}
H^{(2)}_{ij}(x) =\ & \hat H^{(2)}_{ij}(x) + \left[\hat\beta^{(2)}_{ij} - \hat H^{(2)}_{ij}(X_i) \right]\psi_0(X_i,X_{i+1},x) 
+ \left[\hat\beta^{(2)}_{i+1,j} - \hat H^{(2)}_{ij}(X_{i+1}) \right]\psi_1(X_i,X_{i+1},x) \\
&+ \left[\hat\gamma_{ij} - \hat H^{(2)}_{ij,x}(X_i) \right]\varphi_0(X_i,X_{i+1},x)
+ \left[\hat\gamma_{i+1,j}-\hat H^{(2)}_{ij,x}(X_{i+1}) \right]\varphi_1(X_i,X_{i+1},x), \\
& \text{for}\ 0\leqslant(i,j)\leqslant(N_x-1,N_y),
\end{split}
\end{equation}
where $\hat H^{(2)}_{ij}(x)$ is an arbitrary (free) function and $\hat H^{(2)}_{ij,x}$ denotes its derivative.

With the above settings, on  element $\Omega_{ij}$ the function $u_{e_{ij}}(\mbs x)$ satisfies the following conditions, for $0\leqslant(i,j)\leqslant(N_x-1,N_y-1)$,
\begin{equation}\label{eq_96}
\left\{
\begin{split}
& u_{e_{ij}}(X_i,y) = G^{(1)}_{ij}(y), \quad u_{e_{ij}}(X_{i+1},y) = G^{(1)}_{i+1,j}(y), \quad y\in[Y_j,Y_{j+1}]; \\
& u_{e_{ij}}(x,Y_j) = H^{(1)}_{ij}(x), \quad u_{e_{ij}}(x,Y_{j+1}) = H^{(1)}_{i,j+1}(x), \quad x\in[X_i,X_{i+1}]; \\
& \left.\frac{\partial u_{e_{ij}}}{\partial x}\right|_{(X_i,y)} = G^{(2)}_{ij}(y), \quad 
\left.\frac{\partial u_{e_{ij}}}{\partial x}\right|_{(X_{i+1},y)} = G^{(2)}_{i+1,j}(y) \quad y\in[Y_j,Y_{j+1}]; \\
& \left.\frac{\partial u_{e_{ij}}}{\partial y}\right|_{(x,Y_j)} = H^{(2)}_{ij}(x), \quad 
\left.\frac{\partial u_{e_{ij}}}{\partial y}\right|_{(x,Y_{j+1})} = H^{(2)}_{i,j+1}(x), \quad x\in[X_i,X_{i+1}],
\end{split} \right.
\end{equation}
where $G^{(1)}_{ij}$, $G^{(2)}_{ij}$, $H^{(1)}_{ij}$ and $H^{(2)}_{ij}$ are given by equations~\eqref{eq_92}--\eqref{eq_95}.
In light of the discussions in Section~\ref{sec_212}, the general form for $u_{e_{ij}}(\mbs x)$ satisfying~\eqref{eq_96} is given by,
\begin{equation}\label{eq_97}
u_{e_{ij}}(x,y) = g_{e_{ij}}(x,y) - \mbs w_1^T(x)\mbs Q_g \mbs w_2(y)
+ \mbs w_1^T(x)\mbs Q_b \mbs w_2(y),
\quad 0\leqslant (i,j)\leqslant (N_x-1,N_y-1),
\end{equation}
where $g_{e_{ij}}(\mbs x)$ is an arbitrary (free) function, and
\begin{subequations}\label{eq_98}
\begin{align}
& \mbs w_1(x)=\begin{bmatrix}
1 \\ \psi_0(X_i,X_{i+1},x) \\ \psi_1(X_i,X_{i+1},x) \\ \varphi_0(X_i,X_{i+1},x) \\ \varphi_1(X_i,X_{i+1},x)
\end{bmatrix}, \quad 
\mbs w_2(y)=\begin{bmatrix}
1 \\ \psi_0(Y_j,Y_{j+1},y) \\ \psi_1(Y_j,Y_{j+1},y) \\ \varphi_0(Y_j,Y_{j+1},y) \\ \varphi_1(Y_j,Y_{j+1},y)
\end{bmatrix}, \\
& \mbs Q_g = \begin{bmatrix}
0 & g_{e_{ij}}(x,Y_j) & g_{e_{ij}}(x,Y_{j+1}) & g_{e_{ij},y}(x,Y_j) & g_{e_{ij},y}(x,Y_{j+1}) \\
g_{e_{ij}}(X_i,y) & -g_{e_{ij}}(X_i,Y_j) & -g_{e_{ij}}(X_i,Y_{j+1}) & -g_{e_{ij},y}(X_i,Y_j) & -g_{e_{ij},y}(X_i,Y_{j+1})\\
g_{e_{ij}}(X_{i+1},y) & -g_{e_{ij}}(X_{i+1},Y_j) & -g_{e_{ij}}(X_{i+1},Y_{j+1}) & -g_{e_{ij},y}(X_{i+1},Y_j) & -g_{e_{ij},y}(X_{i+1},Y_{j+1}) \\
g_{e_{ij},x}(X_i,y) & -g_{e_{ij},x}(X_i,Y_j) & -g_{e_{ij},x}(X_i,Y_{j+1}) & -g_{e_{ij},xy}(X_i,Y_j) & -g_{e_{ij},xy}(X_i,Y_{j+1}) \\
g_{e_{ij},x}(X_{i+1},y) & -g_{e_{ij},x}(X_{i+1},Y_j) & -g_{e_{ij},x}(X_{i+1},Y_{j+1}) & -g_{e_{ij},xy}(X_{i+1},Y_j) & -g_{e_{ij},xy}(X_{i+1},Y_{j+1})
\end{bmatrix},
\quad  \\
& \mbs Q_b = \begin{bmatrix}
0 & H^{(1)}_{ij}(x) & H^{(1)}_{i,j+1}(x) & H^{(2)}_{ij}(x) & H^{(2)}_{i,j+1}(x) \\
G^{(1)}_{ij}(y) & -\hat\alpha_{ij} & -\hat\alpha_{i,j+1} & -\hat\beta^{(2)}_{ij} & -\hat\beta^{(2)}_{i,j+1} \\
G^{(1)}_{i+1,j}(y) & -\hat\alpha_{i+1,j} & -\hat\alpha_{i+1,j+1} & -\hat\beta^{(2)}_{i+1,j} & -\hat\beta^{(2)}_{i+1,j+1} \\
G^{(2)}_{ij}(y) & -\hat\beta^{(1)}_{ij} & - \hat\beta^{(1)}_{i,j+1} & - \hat\gamma_{ij} & -\hat\gamma_{i,j+1} \\
G^{(2)}_{i+1,j}(y) & -\hat\beta^{(1)}_{i+1,j} & -\hat\beta^{(1)}_{i+1,j+1} & -\hat\gamma_{i+1,j} & -\hat\gamma_{i+1,j+1}
\end{bmatrix}.
\end{align}
\end{subequations}
Here, $\psi_0$, $\psi_1$, $\varphi_0$ and $\varphi_1$ are defined in~\eqref{eq_67}. $g_{e_{ij},x}=\frac{\partial g_{e_{ij}}}{\partial x}$, $g_{e_{ij},y}=\frac{\partial g_{e_{ij}}}{\partial y}$, and $g_{e_{ij},xy}=\frac{\partial^2 g_{e_{ij}}}{\partial x\partial y}$.
The following result holds.


\begin{theorem}\label{thm_7}
(i) The functions $u_{e_{ij}}(\mbs x)$ given by~\eqref{eq_97}, together with~\eqref{eq_98} and~\eqref{eq_92}--\eqref{eq_95}, satisfy the conditions~\eqref{eq_84},
for arbitrary functions $g_{e_{ij}}(\mbs x)$, $\hat G^{(1)}_{ij}(y)$, $\hat G^{(2)}_{ij}(y)$, $\hat H^{(1)}_{ij}(x)$ and $\hat H^{(2)}_{ij}(x)$, and for arbitrary parameter values $\hat\alpha_{ij}$, $\hat\beta^{(1)}_{ij}$, $\hat\beta^{(2)}_{ij}$ and $\hat\gamma_{ij}$ involved therein.
(ii) Any function $u(\mbs x)$ defined by~\eqref{eq_69} satisfying the conditions~\eqref{eq_84}
can be expressed in the form as given by~\eqref{eq_97},~\eqref{eq_98} and~\eqref{eq_92}--\eqref{eq_95}, for some $g_{e_{ij}}(\mbs x)$, $\hat G^{(1)}_{ij}(y)$, $\hat G^{(2)}_{ij}(y)$, $\hat H^{(1)}_{ij}(x)$ and $\hat H^{(2)}_{ij}(x)$, and  some value of the parameters $\hat\alpha_{ij}$, $\hat\beta^{(1)}_{ij}$, $\hat\beta^{(2)}_{ij}$ and $\hat\gamma_{ij}$ therein.
\end{theorem}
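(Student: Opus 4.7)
The plan is to follow the same two-step template used for Theorems~\ref{thm_4}--\ref{thm_6}: part~(i) will be a direct verification that exploits the endpoint behavior of the Hermite-type switching functions, and part~(ii) will be established by exhibiting an explicit assignment of the free functions and free parameters that reproduces any given $u$.

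For part~(i), the decisive algebraic facts are $\mbs w_1(X_i) = (1,1,0,0,0)^T$, $\mbs w_1(X_{i+1}) = (1,0,1,0,0)^T$, $\mbs w_1'(X_i) = (0,0,0,1,0)^T$ and $\mbs w_1'(X_{i+1}) = (0,0,0,0,1)^T$, with the analogous identities for $\mbs w_2$. I will verify $C^0$ continuity at $x = X_{i+1}$ by left-multiplying $\mbs Q_g$ and $\mbs Q_b$ by $\mbs w_1^T(X_{i+1})$, which selects the sum of rows~$1$ and~$3$: the sign pattern of those rows in $\mbs Q_g$ forces every entry except the first to cancel, leaving $g_{e_{ij}}(X_{i+1},y)$; in $\mbs Q_b$ the endpoint conditions~\eqref{eq_90a}--\eqref{eq_90c} on $H^{(1)}_{ij}$ and the analogous conditions~\eqref{eq_91} on $H^{(2)}_{ij}$ produce the same collapse with surviving entry $G^{(1)}_{i+1,j}(y)$. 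Substituting into~\eqref{eq_97} cancels the $g_{e_{ij}}$ term and yields $u_{e_{ij}}(X_{i+1},y) = G^{(1)}_{i+1,j}(y)$. The identical computation on $\Omega_{i+1,j}$, where $\mbs w_1^T(X_{i+1})$ now equals $(1,1,0,0,0)$ and picks rows~$1$ and~$2$, gives the same value. For the $C^1$ condition I will differentiate~\eqref{eq_97} in $x$, obtaining $\mbs w_1'^T \mbs Q\mbs w_2 + \mbs w_1^T \partial_x\mbs Q\mbs w_2$ (only the first rows of $\mbs Q_g,\mbs Q_b$ depend on $x$); at $X_{i+1}$ the first contribution extracts row~$5$ and the second extracts the $x$-derivative of row~$1$, and the pairwise cancellations forced by~\eqref{eq_90c} and~\eqref{eq_91} leave $G^{(2)}_{i+1,j}(y)$, which matches $\partial_x u_{e_{i+1,j}}(X_{i+1},y)$. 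The $y$-interface identities follow by the complete symmetry between the $x$ and $y$ roles in the construction.

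For part~(ii), given any $u$ satisfying~\eqref{eq_84}, I will set $g_{e_{ij}} := u_{e_{ij}}$, $\hat G^{(1)}_{ij}(y) := u(X_i,y)$, $\hat G^{(2)}_{ij}(y) := \partial_x u(X_i,y)$, $\hat H^{(1)}_{ij}(x) := u(x,Y_j)$, $\hat H^{(2)}_{ij}(x) := \partial_y u(x,Y_j)$, together with $\hat\alpha_{ij} = u(X_i,Y_j)$, $\hat\beta^{(1)}_{ij} = \partial_x u(X_i,Y_j)$, $\hat\beta^{(2)}_{ij} = \partial_y u(X_i,Y_j)$ and $\hat\gamma_{ij} = \partial^2_{xy} u(X_i,Y_j)$. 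The hypotheses~\eqref{eq_84} guarantee that these traces and corner values are single-valued at shared interfaces and corners, so the assignments are consistent across adjacent elements. With these substitutions every bracket in~\eqref{eq_92}--\eqref{eq_95} vanishes, giving $G^{(\cdot)}_{ij} \equiv \hat G^{(\cdot)}_{ij}$ and $H^{(\cdot)}_{ij} \equiv \hat H^{(\cdot)}_{ij}$; an entry-by-entry inspection then shows $\mbs Q_b = \mbs Q_g$, and~\eqref{eq_97} collapses to $u_{e_{ij}}(\mbs x) = g_{e_{ij}}(\mbs x) = u_{e_{ij}}(\mbs x)$.

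The hard part is bookkeeping rather than mathematics: the $5\times 5$ matrices in~\eqref{eq_98}, combined with the need to track both $\mbs w_1'^T \mbs Q\mbs w_2$ and $\mbs w_1^T \partial_x\mbs Q\mbs w_2$ when differentiating (since only the first rows of $\mbs Q_g,\mbs Q_b$ carry $x$-dependence), make the part~(i) verification lengthy. My organizational strategy will be to first tabulate, for each of $\mbs w_1(X_i)$, $\mbs w_1(X_{i+1})$, $\mbs w_1'(X_i)$ and $\mbs w_1'(X_{i+1})$, which row of $\mbs Q$ it selects, so that the cancellations needed in the four interface cases reduce to a mechanical check, after which the $y$-direction statements are obtained by the symmetry of the construction.
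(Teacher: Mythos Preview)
Your proposal is correct and follows essentially the same approach as the paper: part~(i) is handled by direct verification (the paper simply labels this ``straightforward (albeit cumbersome) verification'' without spelling out the row-selection mechanism you describe), and your explicit assignments in part~(ii) coincide exactly with those the paper uses. Your write-up is in fact more detailed than the paper's, particularly in noting that the hypotheses~\eqref{eq_84} ensure single-valuedness of the traces and corner data, a point the paper leaves implicit.
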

\begin{proof}
(i) By straightforward (albeit cumbersome) verification.
(ii) Suppose $u(\mbs x)$ is a function defined by~\eqref{eq_69} that satisfies~\eqref{eq_84}. The functions $u_{e_{ij}}(\mbs x)$ in $u(\mbs x)$ can be written in the form of~\eqref{eq_97} by setting $g_{e_{ij}}(\mbs x)=u_{e_{ij}}(\mbs x)$, $\hat G_{ij}^{(1)}(y)=u(X_i,y)$ ($y\in[Y_j,Y_{j+1}]$), $\hat G_{ij}^{(2)}(y)=\left.\frac{\partial u}{\partial x}\right|_{(X_i,y)}$ ($y\in[Y_j,Y_{j+1}]$), $\hat H_{ij}^{(1)}(x)=u(x,Y_j)$ ($x\in[X_i,X_{i+1}]$), $\hat H_{ij}^{(2)}(x)=\left.\frac{\partial u}{\partial y} \right|_{(x,Y_j)}$ ($x\in[X_i,X_{i+1}]$), $\hat\alpha_{ij}=u(X_i,Y_j)$, $\hat\beta_{ij}^{(1)}=\left.\frac{\partial u}{\partial x} \right|_{(X_i,Y_j)}$, $\hat\beta_{ij}^{(2)}=\left.\frac{\partial u}{\partial y} \right|_{(X_i,Y_j)}$, and $\hat\gamma_{ij}=\left.\frac{\partial^2 u}{\partial x\partial y} \right|_{(X_i,Y_j)}$ in~\eqref{eq_97},~\eqref{eq_98}, and~\eqref{eq_92}--\eqref{eq_95}.
\end{proof}


To derive a computational technique, we restrict the free functions $g_{e_{ij}}(\mbs x)$, for $0\leqslant(i,j)\leqslant(N_x-1,N_y-1)$, to the function space $\mathcal{G}(\Omega_{ij},\mathcal M)$ defined in~\eqref{eq_82}. We restrict the free functions $\hat G^{(1)}_{ij}(y)$ (for $0\leqslant(i,j)\leqslant(N_x,N_y-1)$) and $\hat G^{(2)}_{ij}(y)$ (for $0\leqslant(i,j)\leqslant(N_x,N_y-1)$) to the function space $\mathcal F_m([Y_j,Y_{j+1}])$, where $\mathcal F_m$ is defined in~\eqref{eq_61}.
We restrict the free functions $\hat H^{(1)}_{ij}(x)$ (for $0\leqslant(i,j)\leqslant(N_x-1,N_y)$) and $\hat H^{(2)}_{ij}(x)$ (for $0\leqslant(i,j)\leqslant(N_x-1,N_y)$) to the function space $\mathcal F_m([X_i,X_{i+1}])$. Let
\begin{subequations}\label{eq_99}
\begin{align}
& g_{e_{ij}}(\mbs x) = \sum_{k=1}^{\mathcal M} \hat g_{e_{ij},k} \Psi_{ij,k}(\mbs x), \quad \mbs x\in\Omega_{ij}, \quad 0\leqslant (i,j)\leqslant (N_x-1,N_y-1); \label{eq_99a} \\
& \hat G^{(1)}_{ij}(y) = \sum_{k=1}^m \mathscr{\hat G}^{(1)}_{ij,k} \Phi_{jk}(y), \quad y\in[Y_j,Y_{j+1}], \quad 0\leqslant (i,j)\leqslant (N_x,N_y-1); \label{eq_99b} \\
& \hat G^{(2)}_{ij}(y) = \sum_{k=1}^m \mathscr{\hat G}^{(2)}_{ij,k} \Phi_{jk}(y), \quad y\in[Y_j,Y_{j+1}], \quad 0\leqslant (i,j)\leqslant (N_x,N_y-1); \label{eq_99c} \\
& \hat H^{(1)}_{ij}(x) = \sum_{k=1}^m \mathscr{\hat H}^{(1)}_{ij,k} \Phi_{ik}(x), \quad x\in[X_i,X_{i+1}], \quad 0\leqslant (i,j)\leqslant (N_x-1,N_y); \label{eq_99d} \\
& \hat H^{(2)}_{ij}(x) = \sum_{k=1}^m \mathscr{\hat H}^{(2)}_{ij,k} \Phi_{ik}(x), \quad x\in[X_i,X_{i+1}], \quad 0\leqslant (i,j)\leqslant (N_x-1,N_y). \label{eq_99e}
\end{align}
\end{subequations}
Here $\hat g_{e_{ij},k}$, $\mathscr{\hat G}^{(1)}_{ij,k}$, $\mathscr{\hat G}^{(2)}_{ij,k}$, $\mathscr{\hat H}^{(1)}_{ij,k}$, and $\mathscr{\hat H}^{(2)}_{ij,k}$ are the expansion coefficients. Substitution of the expressions~\eqref{eq_99a}--\eqref{eq_99e} into~\eqref{eq_97} leads to the parameterized form of $u_{e_{ij}}(\mbs x)$. In this parameterized form, the unknown coefficients (parameters) to be determined include $\hat g_{e_{ij},k}$ (for $(0,0,1)\leqslant(i,j,k)\leqslant(N_x-1,N_y-1,\mathcal M)$), $\mathscr{\hat G}^{(1)}_{ij,k}$ (for $(0,0,1)\leqslant(i,j,k)\leqslant(N_x,N_y-1,m)$), $\mathscr{\hat G}^{(2)}_{ij,k}$ (for $(0,0,1)\leqslant(i,j,k)\leqslant(N_x,N_y-1,m)$), $\mathscr{\hat H}^{(1)}_{ij,k}$ (for $(0,0,1)\leqslant(i,j,k)\leqslant(N_x-1,N_y,m)$), $\mathscr{\hat H}^{(2)}_{ij,k}$ (for $(0,0,1)\leqslant(i,j,k)\leqslant(N_x-1,N_y,m)$), $\hat\alpha_{ij}$ (for $0\leqslant(i,j)\leqslant(N_x,N_y)$), $\hat\beta^{(1)}_{ij}$ (for $0\leqslant(i,j)\leqslant(N_x,N_y)$), $\hat\beta^{(2)}_{ij}$ (for $0\leqslant(i,j)\leqslant(N_x,N_y)$), and $\hat\gamma_{ij}$ (for $0\leqslant (i,j)\leqslant(N_x,N_y)$).

\paragraph{Mixed FCEs} 

We next consider the case of enforcing $C^0$ continuity  across the elements in one direction and $C^1$ continuity  in the other direction. We refer to elements with such continuity properties as mixed FCEs. 

To be more concrete, let us consider $C^1$ continuity along  $x$  and $C^0$ continuity along  $y$, 
\begin{subequations}\label{eq_a97}
\begin{align}
& u_{e_{ij}}(X_{i+1},y) = u_{e_{i+1,j}}(X_{i+1},y), \quad
y\in[Y_j,Y_{j+1}], \quad
0\leqslant (i,j)\leqslant (N_x-2,N_y-1); \\
& u_{e_{ij}}(x,Y_{j+1}) = u_{e_{i,j+1}}(x,Y_{j+1}), \quad
x\in [X_i,X_{i+1}], \quad
0\leqslant (i,j)\leqslant (N_x-1,N_y-2); \\
& \left.\frac{\partial u_{e_{ij}}}{\partial x} \right|_{(X_{i+1},y)} = \left.\frac{\partial u_{e_{i+1,j}}}{\partial x} \right|_{(X_{i+1},y)}, \quad y\in[Y_j,Y_{j+1}], \quad 0\leqslant (i,j)\leqslant (N_x-2,N_y-1).
\end{align}
\end{subequations}
Let
\begin{align}
u(X_i,Y_j) = \hat\alpha_{ij}, \quad
\left.\frac{\partial u}{\partial x}\right|_{(X_i,Y_j)} = \hat\beta_{ij}, \quad 0\leqslant(i,j)\leqslant (N_x,N_y),
\end{align}
where $\hat\alpha_{ij}$ and $\hat\beta_{ij}$ are  parameters to be determined.

In light of the continuity conditions in~\eqref{eq_a97}, let
\begin{subequations}
\begin{align}
& u(X_i,y) = G_{ij}^{(1)}(y), \quad y\in[Y_j,Y_{j+1}], \quad 0\leqslant (i,j)\leqslant (N_x,N_y-1); \\
& \left.\frac{\partial u}{\partial x}\right|_{(X_i,y)} = G^{(2)}_{ij}(y), \quad y\in[Y_j,Y_{j+1}], \quad 0\leqslant (i,j)\leqslant (N_x,N_y-1); \\
& u(x,Y_j) = H_{ij}(x), \quad x\in[X_i,X_{i+1}], \quad 0\leqslant(i,j)\leqslant (N_x-1,N_y),
\end{align}
\end{subequations}
where $G^{(1)}_{ij}(y)$, $G^{(2)}_{ij}(y)$ and $H_{ij}(x)$ are functions to be determined. In addition, $G^{(1)}_{ij}(y)$ satisfies the following conditions,
\begin{align}\label{eq_b100}
G^{(1)}_{ij}(Y_j) = \hat\alpha_{ij}, \quad G^{(1)}_{ij}(Y_{j+1}) = \hat\alpha_{i,j+1}, \quad 0\leqslant(i,j)\leqslant(N_x,N_y-1).
\end{align}
$G^{(2)}_{ij}(y)$ satisfies the following conditions,
\begin{align}\label{eq_b101}
G^{(2)}_{ij}(Y_j) = \hat\beta_{ij}, \quad G^{(2)}_{ij}(Y_{j+1}) = \hat\beta_{i,j+1}, \quad 0\leqslant(i,j)\leqslant(N_x,N_y-1).
\end{align}
$H_{ij}(x)$ satisfies the following conditions,
\begin{subequations}\label{eq_b102}
\begin{align}
& H_{ij}(X_i) = \hat\alpha_{ij}, \quad H_{ij}(X_{i+1}) = \hat\alpha_{i+1,j}, \quad 0\leqslant(i,j)\leqslant (N_x-1,N_y); \label{eq_b102a} \\
& \left.\frac{dH_{ij}}{dx}\right|_{X_i} = \hat\beta_{ij}, \quad \left.\frac{dH_{ij}}{dx}\right|_{X_{i+1}} = \hat\beta_{i+1,j}, \quad 0\leqslant (i,j)\leqslant (N_x-1,N_y). \label{eq_b102b}
\end{align}
\end{subequations}

The general form of $G^{(1)}_{ij}(y)$ satisfying~\eqref{eq_b100} is given by,
\begin{equation}\label{eq_b103}
\begin{split}
G^{(1)}_{ij}(y) =\ & \hat G_{ij}^{(1)}(y) + \left[\hat\alpha_{ij}-\hat G^{(1)}_{ij}(Y_j) \right]\phi_0(Y_j,Y_{j+1},y) + \left[\hat\alpha_{i,j+1}-\hat G^{(1)}_{ij}(Y_{j+1}) \right]\phi_1(Y_j,Y_{j+1},y), \\
& \text{for}\ 0\leqslant(i,j)\leqslant(N_x,N_y-1),
\end{split}
\end{equation}
where $\hat G^{(1)}_{ij}(y)$ is an arbitrary (free) function, and $\phi_0$ and $\phi_1$ are defined in~\eqref{eq_a60}.
The general form of $G^{(2)}_{ij}(y)$ satisfying~\eqref{eq_b101} is given by
\begin{equation}
\begin{split}
G^{(2)}_{ij}(y) =\ & \hat G_{ij}^{(2)}(y) + \left[\hat\beta_{ij}-\hat G^{(2)}_{ij}(Y_j) \right]\phi_0(Y_j,Y_{j+1},y) + \left[\hat\beta_{i,j+1}-\hat G^{(2)}_{ij}(Y_{j+1}) \right]\phi_1(Y_j,Y_{j+1},y), \\
& \text{for}\ 0\leqslant(i,j)\leqslant(N_x,N_y-1),
\end{split}
\end{equation}
where $\hat G^{(2)}_{ij}(y)$ is an arbitrary (free) function. 
The general form of $H_{ij}(x)$ satisfying~\eqref{eq_b102} is given by,
\begin{equation}\label{eq_b105}
\begin{split}
H_{ij}(x) =\ & \hat H_{ij}(x) + \left[\hat\alpha_{ij} - \hat H_{ij}(X_i) \right]\psi_0(X_i,X_{i+1},x)
+ \left[\hat\alpha_{i+1,j} - \hat H_{ij}(X_{i+1}) \right]\psi_1(X_i,X_{i+1},x) \\
&+ \left[\hat\beta_{ij} - \hat H_{ij,x}(X_i) \right]\varphi_0(X_i,X_{i+1},x)
+ \left[\hat\beta_{i+1,j} - \hat H_{ij,x}(X_{i+1}) \right]\varphi_1(X_i,X_{i+1},x), \\
& \text{for}\ 0\leqslant(i,j)\leqslant(N_x-1,N_y),
\end{split}
\end{equation}
where $\hat H_{ij}(x)$ is an arbitrary (free) function.

With the above settings, on $\Omega_{ij}$ the function $u_{e_{ij}}(\mbs x)$ satisfies the following conditions, for $0\leqslant(i,j)\leqslant(N_x-1,N_y-1)$,
\begin{equation}\label{eq_b106}
\left\{
\begin{split}
& u_{e_{ij}}(X_i,y) = G^{(1)}_{ij}(y), \quad u_{e_{ij}}(X_{i+1},y) = G^{(1)}_{i+1,j}(y), \quad y\in[Y_j,Y_{j+1}]; \\
& u_{e_{ij}}(x,Y_j) = H_{ij}(x), \quad u_{e_{ij}}(x,Y_{j+1}) = H_{i,j+1}(x), \quad x\in[X_i,X_{i+1}]; \\
& \left.\frac{\partial u_{e_{ij}}}{\partial x}\right|_{(X_i,y)} = G^{(2)}_{ij}(y), \quad 
\left.\frac{\partial u_{e_{ij}}}{\partial x}\right|_{(X_{i+1},y)} = G^{(2)}_{i+1,j}(y) \quad y\in[Y_j,Y_{j+1}],
\end{split} \right.
\end{equation}
where $G^{(1)}_{ij}(y)$, $G^{(2)}_{ij}(y)$ and $H_{ij}(y)$ are given by~\eqref{eq_b103}--\eqref{eq_b105}.
The general form of $u_{e_{ij}}(\mbs x)$ satisfying~\eqref{eq_b106} is given by,
\begin{equation}\label{eq_b107}
u_{e_{ij}}(x,y) = g_{e_{ij}}(x,y) - \mbs z_1^T(x)\mbs R_g \mbs z_2(y)
+ \mbs z_1^T(x)\mbs R_b \mbs z_2(y),
\quad 0\leqslant (i,j)\leqslant (N_x-1,N_y-1),
\end{equation}
where $g_{e_{ij}}(\mbs x)$ is an arbitrary (free) function, and
\begin{subequations}\label{eq_b108}
\begin{align}
& \mbs z_1(x)=\begin{bmatrix}
1 \\ \psi_0(X_i,X_{i+1},x) \\ \psi_1(X_i,X_{i+1},x) \\ \varphi_0(X_i,X_{i+1},x) \\ \varphi_1(X_i,X_{i+1},x)
\end{bmatrix}, \quad 
\mbs z_2(y)=\begin{bmatrix}
1 \\ \phi_0(Y_j,Y_{j+1},y) \\ \phi_1(Y_j,Y_{j+1},y) 
\end{bmatrix}, \\
& \mbs R_g = \begin{bmatrix}
0 & g_{e_{ij}}(x,Y_j) & g_{e_{ij}}(x,Y_{j+1})  \\
g_{e_{ij}}(X_i,y) & -g_{e_{ij}}(X_i,Y_j) & -g_{e_{ij}}(X_i,Y_{j+1}) \\
g_{e_{ij}}(X_{i+1},y) & -g_{e_{ij}}(X_{i+1},Y_j) & -g_{e_{ij}}(X_{i+1},Y_{j+1})  \\
g_{e_{ij},x}(X_i,y) & -g_{e_{ij},x}(X_i,Y_j) & -g_{e_{ij},x}(X_i,Y_{j+1}) &  \\
g_{e_{ij},x}(X_{i+1},y) & -g_{e_{ij},x}(X_{i+1},Y_j) & -g_{e_{ij},x}(X_{i+1},Y_{j+1}) 
\end{bmatrix},
\quad  \\
& \mbs R_b = \begin{bmatrix}
0 & H_{ij}(x) & H_{i,j+1}(x)  \\
G^{(1)}_{ij}(y) & -\hat\alpha_{ij} & -\hat\alpha_{i,j+1}  \\
G^{(1)}_{i+1,j}(y) & -\hat\alpha_{i+1,j} & -\hat\alpha_{i+1,j+1}  \\
G^{(2)}_{ij}(y) & -\hat\beta_{ij} & - \hat\beta_{i,j+1}  \\
G^{(2)}_{i+1,j}(y) & -\hat\beta_{i+1,j} & -\hat\beta_{i+1,j+1} 
\end{bmatrix}.
\end{align}
\end{subequations}


\begin{theorem}\label{thm_8}
(i) The functions $u_{e_{ij}}(\mbs x)$ given by~\eqref{eq_b107}, together with~\eqref{eq_b108} and~\eqref{eq_b103}--\eqref{eq_b105}, satisfy the conditions~\eqref{eq_a97}, for arbitrary  $g_{e_{ij}}(\mbs x)$, $\hat G^{(1)}_{ij}(y)$, $\hat G^{(2)}_{ij}(y)$ and $\hat H_{ij}(x)$, and arbitrary parameter values $\hat\alpha_{ij}$ and $\hat\beta_{ij}$ involved therein.
(ii) Any function $u(\mbs x)$ defined by~\eqref{eq_69} that satisfies the conditions~\eqref{eq_a97}
can be expressed in the form as given by~\eqref{eq_b107},~\eqref{eq_b108} and~\eqref{eq_b103}--\eqref{eq_b105}, for some $g_{e_{ij}}(\mbs x)$, $\hat G^{(1)}_{ij}(y)$, $\hat G^{(2)}_{ij}(y)$ and $\hat H_{ij}(x)$, and some value of the parameters $\hat\alpha_{ij}$ and $\hat\beta_{ij}$ therein.
\end{theorem}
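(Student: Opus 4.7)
The plan is to mirror the strategy already used for Theorems~\ref{thm_2},~\ref{thm_3},~\ref{thm_6} and~\ref{thm_7}, treating the two parts separately. For part (i), I would verify the three continuity requirements in~\eqref{eq_a97} by direct substitution, relying on the endpoint values of the switching functions. Specifically, I would first record the identities $\phi_0(a,b,a)=1$, $\phi_0(a,b,b)=0$, $\phi_1(a,b,a)=0$, $\phi_1(a,b,b)=1$, together with $\psi_0$, $\psi_1$, $\varphi_0$, $\varphi_1$ satisfying the Hermite end conditions at $a$ and $b$ (value and derivative), as given by~\eqref{eq_67}. Using these, I would evaluate $u_{e_{ij}}(X_{i+1},y)$ from~\eqref{eq_b107}: only the middle slices of $\mbs z_1(x)$ and its derivative survive, so the expression collapses to $G^{(1)}_{i+1,j}(y)$ by the construction of $\mbs R_b$; the same evaluation applied to $u_{e_{i+1,j}}(X_{i+1},y)$ yields the same $G^{(1)}_{i+1,j}(y)$, establishing the first condition. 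The $x$-derivative continuity is analogous, with the non-vanishing $\mbs z_1$-entries being those built from $\varphi_0,\varphi_1$ (whose derivatives at the endpoints give the Kronecker structure), yielding $G^{(2)}_{i+1,j}(y)$ on both sides. Finally, for the $y$-direction $C^0$ condition, I would evaluate at $y=Y_{j+1}$ using $\mbs z_2$; the surviving combination produces $H_{i,j+1}(x)$, which is shared by both $u_{e_{ij}}$ and $u_{e_{i,j+1}}$.

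An auxiliary step is that $G^{(1)}_{ij}$, $G^{(2)}_{ij}$, and $H_{ij}$, as constructed in~\eqref{eq_b103}--\eqref{eq_b105}, take the prescribed nodal/derivative values at the element corners. This is already guaranteed by Theorems~\ref{thm_4} and~\ref{thm_5} (or by direct application of~\eqref{eq_a60} and~\eqref{eq_67}), so the auxiliary edge functions agree at shared corners, which is what makes the above collapse consistent.

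For part (ii), given any piece-wise $u(\mbs x)$ satisfying~\eqref{eq_a97}, I would follow the standard ``set the free data equal to the actual traces'' construction: choose
\[
g_{e_{ij}}(\mbs x)=u_{e_{ij}}(\mbs x),\quad
\hat G^{(1)}_{ij}(y)=u(X_i,y),\quad
\hat G^{(2)}_{ij}(y)=\left.\tfrac{\partial u}{\partial x}\right|_{(X_i,y)},\quad
\hat H_{ij}(x)=u(x,Y_j),
\]
and
\[
\hat\alpha_{ij}=u(X_i,Y_j),\qquad
\hat\beta_{ij}=\left.\tfrac{\partial u}{\partial x}\right|_{(X_i,Y_j)}.
\]
These values are well-defined on shared edges and corners precisely because $u$ satisfies~\eqref{eq_a97}. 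With these substitutions, \eqref{eq_b103}--\eqref{eq_b105} reduce to $G^{(1)}_{ij}(y)=u(X_i,y)$, $G^{(2)}_{ij}(y)=\partial_xu(X_i,y)$, and $H_{ij}(x)=u(x,Y_j)$, so that $\mbs R_g=\mbs R_b$ in~\eqref{eq_b108} and~\eqref{eq_b107} collapses to $u_{e_{ij}}(\mbs x)=g_{e_{ij}}(\mbs x)$, giving the desired representation.

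The main obstacle is part (i), not because any step is deep but because the tensor-product matrix form with mixed basis vectors $\mbs z_1$ (5-component, Hermite) and $\mbs z_2$ (3-component, Lagrange) makes bookkeeping delicate. In particular, when reducing $u_{e_{ij}}(X_{i+1},y)$ one must keep track that only the ``$\psi_1$'' row of $\mbs R_b$ contributes to the value match, while only the ``$\varphi_1$'' row contributes to the $x$-derivative match, and that the $-g_{e_{ij}}$ and $-g_{e_{ij},x}$ entries in $\mbs R_g$ are precisely what cancels the unwanted traces of $g_{e_{ij}}$ to produce a clean $G^{(1)}_{i+1,j}(y)$ or $G^{(2)}_{i+1,j}(y)$. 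Once this pattern is recognized, the verification is mechanical and parallels the proofs of Theorems~\ref{thm_6} and~\ref{thm_7}, so I would carry it out in detail only for one of the three conditions and indicate that the remaining two are analogous.
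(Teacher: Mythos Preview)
Your proposal is correct and follows essentially the same approach as the paper, which simply states that the proof follows the strategy of Theorem~\ref{thm_7}: straightforward verification for part (i), and for part (ii) setting $g_{e_{ij}}=u_{e_{ij}}$ together with the free edge functions and parameters equal to the corresponding traces of $u$. Your explicit choices for $\hat G^{(1)}_{ij}$, $\hat G^{(2)}_{ij}$, $\hat H_{ij}$, $\hat\alpha_{ij}$, $\hat\beta_{ij}$ are precisely the mixed-FCE analogue of those in the proof of Theorem~\ref{thm_7}, and your observation that $\mbs R_g=\mbs R_b$ under these choices is the mechanism by which~\eqref{eq_b107} collapses to $u_{e_{ij}}$.
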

\noindent The proof of this theorem follows the same strategy as that of Theorem~\ref{thm_7}.

For mixed FCEs with $C^1$  conditions along the $y$ direction and $C^0$ conditions along the $x$ direction, one can construct the general forms of $u_{e_{ij}}(\mbs x)$ for $0\leqslant(i,j)\leqslant(N_x-1,N_y-1)$ in an analogous way.
 

\section{Solving Boundary Value Problems with FCEs: Least Squares Collocation Formulation}
\label{sec_3}


We next discuss how to  solve boundary value problems (BVP) using FCEs constructed in the previous section together with  a least squares collocation approach. We restrict our attention to 2D second-order linear PDEs in the following discussions. We defer the discussions on how to use this method to solve nonlinear PDEs, first-order PDEs, and 1D problems to a few remarks at the end of this section.

Consider the domain $\Omega=[a_1,b_1]\times[a_2,b_2]\subset\mbb R^2$ and the following BVP on $\Omega$,
\begin{subequations}\label{eq_100}
\begin{align}
& \mathcal L u(\mbs x) = S(\mbs x), \quad \mbs x=(x,y)\in\Omega, \\
& \mathcal B u(\mbs x) = C(\mbs x), \quad \mbs x\in\partial\Omega,
\end{align}
\end{subequations}
where $\mathcal L$ is a second-order linear differential operator with respect to both $x$ and $y$, $u(\mbs x)$ is the solution field to be sought, $\mathcal B$ is a linear algebraic or differential operator representing the boundary condition (BC), and $S(\mbs x)$ and $C(\mbs x)$ are prescribed functions on $\Omega$ and $\partial\Omega$, respectively.
We focus on Dirichlet BCs (i.e.~$\mathcal B=\mbs I$, the identity operator) here, and outline the ideas on how to deal with other types of BCs using FCEs in a remark at the end of this section. 

Suppose $\Omega$ is partitioned into $N_x$ ($N_x\geqslant 1$) and $N_y$ ($N_y\geqslant 1$) elements along the $x$ and $y$ directions, respectively. Let $\{\ X_i, 0\leqslant i\leqslant N_x\ :\ a_1=X_0<X_1<\cdots<X_{N_x}=b_1 \ \}$ and $\{\ Y_i, 0\leqslant i\leqslant N_y\ :\ a_2=Y_0<Y_1<\cdots<Y_{N_y}=b_2 \ \}$ denote the element boundaries in $x$ and $y$. Let $\Omega_{ij}=[X_i,X_{i+1}]\times[Y_j,Y_{j+1}]$ denote the element with index $e_{ij}=iN_y+j$ for $0\leqslant(i,j)\leqslant(N_x-1,N_y-1)$. 

We solve the problem~\eqref{eq_100} by seeking a piece-wise function $u(\mbs x)$ defined by equation~\eqref{eq_69} that satisfies the following system,
\begin{subequations}\label{eq_101}
\begin{align}
& \mathcal L u_{e_{ij}}(\mbs x) = S(\mbs x), \quad \mbs x\in\Omega_{ij}, \quad 0\leqslant(i,j)\leqslant(N_x-1,N_y-1); \label{eq_101a} \\
& u_{e_{ij}}(X_{i+1},y) = u_{e_{i+1,j}}(X_{i+1},y), \quad y\in[Y_j,Y_{j+1}], \quad (1,0)\leqslant(i,j)\leqslant(N_x-2,N_y-1); \label{eq_101b} \\
& u_{e_{ij}}(x,Y_{j+1}) = u_{e_{i,j+1}}(x,Y_{j+1}), \quad x\in[X_i,X_{i+1}], \quad (0,1)\leqslant(i,j)\leqslant(N_x-1,N_y-2); \label{eq_101c} \\
& \left.\frac{\partial u_{e_{ij}}}{\partial x} \right|_{(X_{i+1},y)} = \left.\frac{\partial u_{e_{i+1,j}}}{\partial x} \right|_{(X_{i+1},y)}, \quad y\in[Y_j,Y_{j+1}], \quad (1,0)\leqslant(i,j)\leqslant(N_x-2,N_y-1); \label{eq_101d} \\
& \left.\frac{\partial u_{e_{ij}}}{\partial y} \right|_{(x,Y_{j+1})} = \left.\frac{\partial u_{e_{i,j+1}}}{\partial y} \right|_{(x,Y_{j+1})}, \quad x\in[X_i,X_{i+1}],  \quad (0,1)\leqslant(i,j)\leqslant(N_x-1,N_y-2); \label{eq_101e} \\
& u_{e_{0j}}(a_1,y) = C(a_1,y), 
\quad y\in[Y_j,Y_{j+1}], \quad 0\leqslant j\leqslant N_y-1; \label{eq_101f} \\
& u_{e_{N_x-1,j}}(b_1,y) = C(b_1,y),
\quad y\in[Y_j,Y_{j+1}], \quad 0\leqslant j\leqslant N_y-1; \label{eq_101g} \\
& u_{e_{i0}}(x,a_2) = C(x,a_2), \quad x\in[X_i,X_{i+1}], \quad 0\leqslant i\leqslant N_x-1; \label{eq_101h} \\
& u_{e_{i,N_y-1}}(x,b_2) = C(x,b_2), \quad x\in[X_i,X_{i+1}], \quad 0\leqslant i\leqslant N_x-1. \label{eq_101i}
\end{align}
\end{subequations}
Equation~\eqref{eq_101a} represents the requirement that $u(\mbs x)$ should satisfy the PDE on each element. Equations~\eqref{eq_101b}--\eqref{eq_101e} represent the requirement that $u(\mbs x)$ should satisfy the $C^1$ continuity  across the interior element boundaries in $x$ and $y$ directions. Equations~\eqref{eq_101f}--\eqref{eq_101i} are the boundary conditions imposed on the elements adjacent to the domain boundary $\partial\Omega$.

In what follows we consider three approaches for numerically solving the system~\eqref{eq_101} based on the least squares collocation idea. The first approach is based on $C^1$ FCEs, which automatically and exactly satisfy the $C^0$ and $C^1$  conditions in~\eqref{eq_101b}--\eqref{eq_101e}. The second approach is based on $C^0$ FCEs, which automatically and exactly satisfy the $C^0$  conditions~\eqref{eq_101b}--\eqref{eq_101c}, but not the $C^1$ conditions~\eqref{eq_101d}--\eqref{eq_101e}. The $C^1$  conditions are imposed in the least squares sense instead with this approach. In the third approach, the piece-wise function $u(\mbs x)$ does not  satisfy any continuity constraint across the element boundaries, and the $C^0$ and $C^1$ conditions~\eqref{eq_101b}--\eqref{eq_101e} are all imposed in the least squares sense.

\paragraph{Solution by $C^1$ FCEs}

Let us first consider using the $C^1$ FCEs constructed in Section~\ref{sec_232} to solve the system~\eqref{eq_101}. In this case $u_{e_{ij}}(\mbs x)$ is given by~\eqref{eq_97} for $0\leqslant(i,j)\leqslant(N_x-1,N_y-1)$, which satisfies the continuity conditions~\eqref{eq_101b}--\eqref{eq_101e} automatically. 

In light of the boundary conditions~\eqref{eq_101f}--\eqref{eq_101i}, the functions $G^{(1)}_{ij}(y)$ and $H^{(1)}_{ij}(x)$ in the $u_{e_{ij}}(\mbs x)$ expression~\eqref{eq_97} satisfy the following conditions,
\begin{subequations}\label{eq_a99}
\begin{align}
& G^{(1)}_{0j}(y) = C(a_1,y), \quad y\in[Y_j,Y_{j+1}], \quad 0\leqslant j\leqslant N_y-1; \label{eq_a99a} \\
& G^{(1)}_{N_x,j}(y) = C(b_1,y), \quad y\in[Y_j,Y_{j+1}], \quad 0\leqslant j\leqslant N_y-1; \label{eq_a99b} \\
& H^{(1)}_{i0}(x) = C(x,a_2), \quad x\in[X_i,X_{i+1}], \quad 0\leqslant i\leqslant N_x-1; \label{eq_a99c} \\
& H^{(1)}_{i,N_y}(x) = C(x,b_2), \quad x\in[X_i,X_{i+1}], \quad 0\leqslant i\leqslant N_x-1. \label{eq_a99d}
\end{align}
\end{subequations}
In addition, the following conditions on the parameters $\hat\alpha_{ij}$, $\hat\beta^{(1)}_{ij}$ and $\hat\beta^{(2)}_{ij}$ involved in~\eqref{eq_97} hold,
\begin{subequations}\label{eq_a100}
\begin{align}
& \hat\alpha_{0j} = C(a_1,Y_j), \quad \hat\alpha_{N_x,j} = C(b_1,Y_j), \quad 0\leqslant j\leqslant N_y; \\
& \hat\alpha_{i0} = C(X_i,a_2), \quad \hat\alpha_{i,N_y} = C(X_i,b_2), \quad 0\leqslant i\leqslant N_x; \\
& \hat\beta^{(1)}_{i0} = C_x(X_i,a_2), \quad \hat\beta^{(1)}_{i,N_y} = C_x(X_i,b_2), \quad 0\leqslant i\leqslant N_x; \\
& \hat\beta^{(2)}_{0j} = C_y(a_1,Y_j), \quad \hat\beta^{(2)}_{N_x,j} = C_y(b_1,Y_j), \quad 0\leqslant j\leqslant N_y,
\end{align}
\end{subequations}
where $C_x(x,y)$ and $C_y(x,y)$ denote the partial derivatives of $C(x,y)$ with respect to $x$ and $y$, respectively.

Therefore, in~\eqref{eq_97}, the functions $G^{(1)}_{ij}(y)$ are given by~\eqref{eq_a99a}--\eqref{eq_a99b}, and by~\eqref{eq_92} and~\eqref{eq_99b} for $(1,0)\leqslant(i,j)\leqslant(N_x-1,N_y-1)$. The functions $H^{(1)}_{ij}(x)$ are given by~\eqref{eq_a99c}--\eqref{eq_a99d}, and by~\eqref{eq_94} and~\eqref{eq_99d} for $(0,1)\leqslant(i,j)\leqslant(N_x-1,N_y-1)$. The functions $G^{(2)}_{ij}(y)$ ($0\leqslant(i,j)\leqslant(N_x,N_y-1)$) and $H^{(2)}_{ij}(x)$ ($0\leqslant(i,j)\leqslant(N_x-1,N_y)$) are the same as those given in Section~\ref{sec_232}. In the parameterized form of $u_{e_{ij}}(\mbs x)$ for $0\leqslant(i,j)\leqslant(N_x-1,N_y-1)$, let $\bm\Theta$ denote the vector of the unknown coefficients to be determined.
Then $\bm\Theta$ is given by,
\begin{equation}\label{eq_a101}
\begin{split}
\bm\Theta = \{\ &\hat g_{e_{ij},k},\ \text{for}\ (0,0,1)\leqslant(i,j,k)\leqslant(N_x-1,N_y-1,\mathcal M));  \\
& \mathscr{\hat G}^{(1)}_{ij,k}\ \text{for}\ (1,0,1) \leqslant(i,j,k)\leqslant(N_x-1,N_y-1,m)); \\
& \mathscr{\hat G}^{(2)}_{ij,k}\ \text{for}\ 
(0,0,1)\leqslant(i,j,k)\leqslant(N_x,N_y-1,m); \\ 
& \mathscr{\hat H}^{(1)}_{ij,k}, \ \text{for}\ 
(0,1,1)\leqslant(i,j,k)\leqslant(N_x-1,N_y-1,m); \\
& \mathscr{\hat H}^{(2)}_{ij,k}, \ \text{for}\ 
(0,0,1)\leqslant(i,j,k)\leqslant(N_x-1,N_y,m); \\ 
& \hat\alpha_{ij}, \ \text{for}\ 
1\leqslant(i,j)\leqslant(N_x-1,N_y-1); \quad
 \hat\beta^{(1)}_{ij}, \ \text{for}\ 
(0,1)\leqslant(i,j)\leqslant(N_x,N_y-1); \\ 
 & \hat\beta^{(2)}_{ij}, \ \text{for}\ 
(1,0)\leqslant(i,j)\leqslant(N_x-1,N_y); \quad 
 \hat\gamma_{ij}, \ \text{for}\ 
0\leqslant (i,j)\leqslant(N_x,N_y) \ \}.
\end{split}
\end{equation}
With this parametric form  for $u_{e_{ij}}(\mbs x)$,  the conditions~\eqref{eq_101b}--\eqref{eq_101i} are automatically satisfied, and the equations in~\eqref{eq_101a} are the only ones that need to be solved.


Let 
\begin{equation}\label{eq_102}
\mbb X = \{\ \mbs x_p^{e_{ij}}\in\Omega_{ij}\ |\ 0\leqslant(i,j,p)\leqslant(N_x-1,N_y-1,Q-1)  \ \}
\end{equation}
denote a set of $Q$ collocation points from each sub-domain. While a variety of distributions for the collocation points can be considered, we employ either Gauss-Lobatto-Legendre quadrature points or a uniform set of grid points on each sub-domain as the collocation points in numerical simulations. We enforce equation~\eqref{eq_101a} on the collocation points. This leads to
\begin{equation}\label{eq_103}
\mathcal L u_{e_{ij}}(\mbs x_p^{e_{ij}}) = S(\mbs x_p^{e_{ij}}), \quad \mbs x_p^{e_{ij}}\in\mbb X, \quad 0\leqslant(i,j,p)\leqslant (N_x-1,N_y-1,Q-1),
\end{equation}
where $u_{e_{ij}}(\mbs x)$ is the parametric form of~\eqref{eq_97} with the unknown parameters $\bm\Theta$ defined in~\eqref{eq_a101}. This is a linear algebraic system of equations about the unknown parameters $\bm\Theta$. The system involves terms $\mathcal L f(\mbs x_p^{e_{ij}})$, where $f(\mbs x)$ is a known function consisting of the basis functions $\Psi_{ij,k}$ and $\Phi_{ik}$. These terms are well-defined and can be computed once the basis functions are specified. The system~\eqref{eq_103} is rectangular, in which the number of equations and the number of unknowns are not equal in general. Symbolically we re-write this system into,
\begin{equation}\label{eq_104}
\mbs H\bm\Theta = \mbs S,
\end{equation}
where $\mbs H$ denotes the rectangular coefficient matrix, $\bm\Theta$ is the vector of unknown parameters, and $\mbs S$ denotes the right hand side (RHS).
We seek a least squares solution to~\eqref{eq_104}, and solve this system for $\bm\Theta$ by the linear least squares method~\cite{Bjorck1996}.
In our implementation, the least squares solution to~\eqref{eq_104} is obtained by the ``lsqr'' routine in Matlab. Upon attaining  $\bm\Theta$, the solution to the original system~\eqref{eq_101} is given by~\eqref{eq_97}.

\paragraph{Solution by $C^0$ FCEs}

We next consider solving the system~\eqref{eq_101} using the $C^0$ FCEs constructed in Section~\ref{sec_232}. In this case, $u_{e_{ij}}(\mbs x)$ is given by~\eqref{eq_79} for $0\leqslant(i,j)\leqslant(N_x-1,N_y-1)$, which satisfy the conditions~\eqref{eq_101b}--\eqref{eq_101c} automatically.  

In light of the conditions~\eqref{eq_101f}--\eqref{eq_101i}, the functions $G_{ij}(y)$ and $H_{ij}(x)$ involved in the $u_{e_{ij}}$ expression~\eqref{eq_79} satisfy,
\begin{subequations}\label{eq_105}
\begin{align}
& G_{0j}(y) = C(a_1,y), \quad y\in[Y_j,Y_{j+1}], \quad 0\leqslant j\leqslant N_y-1; \label{eq_105a} \\
& G_{N_x,j}(y) = C(b_1,y), \quad y\in[Y_j,Y_{j+1}], \quad 0\leqslant j\leqslant N_y-1; \label{eq_105b} \\
& H_{i0}(x) = C(x,a_2), \quad x\in[X_i,X_{i+1}], \quad 0\leqslant i\leqslant N_x-1; \label{eq_105c} \\
& H_{i,N_y}(x) = C(x,b_2), \quad x\in[X_i,X_{i+1}], \quad 0\leqslant i\leqslant N_x-1. \label{eq_105d}
\end{align}
\end{subequations}
In addition, the parameters $\hat\alpha_{ij}$ involved in~\eqref{eq_79} satisfy the conditions,
\begin{subequations}\label{eq_106}
\begin{align}
& \hat\alpha_{0j} = C(a_1,Y_j), \quad \hat\alpha_{N_x,j} = C(b_1,Y_j), \quad 0\leqslant j\leqslant N_y; \\
& \hat\alpha_{i0} = C(X_i,a_2), \quad \hat\alpha_{i,N_y} = C(X_i,b_2), \quad 0\leqslant i\leqslant N_x.
\end{align}
\end{subequations}

Therefore, in~\eqref{eq_79}, the functions $G_{ij}(y)$ are given by~\eqref{eq_105a}--\eqref{eq_105b}, and by~\eqref{eq_76} for $(1,0)\leqslant(i,j)\leqslant(N_x-1,N_y-1)$.  The functions $H_{ij}(x)$ are given by~\eqref{eq_105c}--\eqref{eq_105d}, and by~\eqref{eq_77} for $(0,1)\leqslant(i,j)\leqslant(N_x-1,N_y-1)$. Let $\bm\Theta$ denote the unknown parameters to be determined in the parametric form of~\eqref{eq_79}, which is given by,
\begin{equation}\label{eq_107}
\begin{split}
\bm\Theta = \{\ &\hat g_{e_{ij},k},\ \text{for}\ (0,0,1)\leqslant(i,j,k)\leqslant(N_x-1,N_y-1,\mathcal M);  \\
& \mathscr{\hat G}_{ij,k},\ \text{for}\ (1,0,1) \leqslant(i,j,k)\leqslant(N_x-1,N_y-1,m); \\
& \mathscr{\hat H}_{ij,k}, \ \text{for}\ 
(0,1,1)\leqslant(i,j,k)\leqslant(N_x-1,N_y-1,m); \\
& \hat\alpha_{ij}, \ \text{for}\ 
1\leqslant(i,j)\leqslant(N_x-1,N_y-1)
 \ \}.
\end{split}
\end{equation}
With this parametric form of $u_{e_{ij}}(\mbs x)$, the equations~\eqref{eq_101b}--\eqref{eq_101c} and~\eqref{eq_101f}--\eqref{eq_101i} are automatically satisfied. The equations in~\eqref{eq_101a} and~\eqref{eq_101d}--\eqref{eq_101e} are the only ones that need to be solved.


We choose a set of collocation points from each element as defined in~\eqref{eq_102}, with the requirements that (i) a non-empty subset of the collocation points should reside on each of the boundaries of each element, and (ii) the collocation points from two neighboring elements that reside on their shared element boundary should match. Let
\begin{equation}\label{eq_108}
\left\{
\begin{split}
& \mbb Y_{ij}^b = \mbb X\cap\Omega_{ij}\cap\Omega_{i+1,j},
\quad 0\leqslant(i,j)\leqslant(N_x-2,N_y-1); \\
& \mbb Z_{ij}^b = \mbb X\cap\Omega_{ij}\cap\Omega_{i,j+1}, \quad 0\leqslant(i,j)\leqslant(N_x-1,N_y-2),
\end{split}
\right.
\end{equation}
which denote the sets of collocation points residing on the vertical and horizontal element boundaries, respectively. Uniform grid points or Gauss-Lobatto-Legendre quadrature points on each element are employed as the collocation points in our  simulations.


We enforce the equations~\eqref{eq_101a},~\eqref{eq_101d} and~\eqref{eq_101e} on the collocation points as follows,
\begin{subequations}\label{eq_109}
\begin{align}
& \mathcal L u_{e_{ij}}(\mbs x_p^{e_{ij}}) = S(\mbs x_p^{e_{ij}}), \quad \mbs x_{p}^{e_{ij}}\in\mbb X, \quad 0\leqslant(i,j,p)\leqslant(N_x-1,N_y-1,Q-1); \\
& \left.\frac{\partial u_{e_{ij}}}{\partial x}\right|_{\mbs y_p}
= \left.\frac{\partial u_{e_{i+1,j}}}{\partial x}\right|_{\mbs y_p}, \quad \mbs y_p=(X_{i+1},y_p)\in\mbb Y^b_{ij}, \quad 0\leqslant(i,j,p)\leqslant(N_x-2,N_y-1,|\mbb Y^b_{ij}|-1); \\
& \left.\frac{\partial u_{e_{ij}}}{\partial y}\right|_{\mbs z_p}
= \left.\frac{\partial u_{e_{i,j+1}}}{\partial y}\right|_{\mbs z_p}, \quad \mbs z_p=(x_p,Y_{j+1})\in\mbb Z^b_{ij}, \quad 0\leqslant(i,j,p)\leqslant(N_x-1,N_y-2,|\mbb Z^b_{ij}|-1),
\end{align}
\end{subequations}
where $|\mbb Y_{ij}^b|$ and $|\mbb Z_{ij}^b|$ denote the size of $\mbb Y_{ij}^b$ and $\mbb Z_{ij}^b$, and $u_{e_{ij}}(\mbs x)$ is given by~\eqref{eq_79}.
This is a system linear algebraic system of equations about the parameters $\bm\Theta$ as defined in~\eqref{eq_107}, which can symbolically be written into the form in~\eqref{eq_104}. We seek a least squares solution to this system, and solve this system by the linear least squares method. Upon finding $\bm\Theta$, the solution to the original system~\eqref{eq_101} is given by~\eqref{eq_79}.

\paragraph{Solution by FCEs with No Continuity (FCE-NC)}

Employing the least squares collocation approach, we can also solve the system~\eqref{eq_101} using piece-wise functions $u_{e_{ij}}(\mbs x)$ for $0\leqslant(i,j)\leqslant(N_x-1,N_y-1)$ satisfying no intrinsic continuity  across the element boundaries. In this case, all the conditions in~\eqref{eq_101b}--\eqref{eq_101i} will be enforced in the least squares sense. We will refer to such elements as FCE-NC (i.e.~FCE with No Continuity) hereafter.

We restrict $u_{e_{ij}}(\mbs x)$ to the function space $\mathcal G(\Omega_{ij},\mathcal M)$ defined in~\eqref{eq_82}. Let
\begin{align}\label{eq_110}
u_{e_{ij}}(\mbs x)=\sum_{k=1}^{\mathcal M} \hat u_{e_{ij},k}\Psi_{ij,k}(\mbs x), \quad \mbs x\in\Omega_{ij}, \quad 0\leqslant(i,j)\leqslant(N_x-1,N_y-1),
\end{align}
where $\hat u_{e_{ij},k}$ are the unknown coefficients to be determined.
Define 
\begin{equation}\label{eq_111}
\bm\Theta = \{\ 
\hat u_{e_{ij},k}, \ (0,0,1)\leqslant(i,j,k)\leqslant(N_x-1,N_y-1,\mathcal M)
\ \}. 
\end{equation}


We again choose a set of collocation points from each element as defined in~\eqref{eq_102}, with the same requirements as for the $C^0$ FCEs. In other words, for each boundary of each element there is a non-empty subset of collocation points from $\mbb X$ should reside on this boundary, and those collocation points from two adjacent elements that reside on their shared element boundary should match. Define
\begin{equation}
\left\{
\begin{split}
& \mbb Y^{L}_{j} = \{\ \mbs x_p=(a_1,y_p)\in\mbb X\ |\ \mbs x_p\in\Omega_{0j} \ \}, \quad \quad 0\leqslant j\leqslant N_y-1;
\\ 
& \mbb Y^{R}_j = \{\ \mbs x_p=(b_1,y_p)\in\mbb X\ |\ \mbs x_p\in\Omega_{N_x-1,j} \ \},  \quad 0\leqslant j\leqslant N_y-1; \\
& \mbb Z^{B}_i = \{\ \mbs x_p=(x_p,a_2)\in\mbb X\ |\ \mbs x_p\in\Omega_{i0} \ \}, \quad 0\leqslant i\leqslant N_x-1; \\
& \mbb Z^{T}_i = \{\ \mbs x_p=(x_p,b_2)\in\mbb X\ |\ \mbs x_p\in\Omega_{i,N_y-1} \ \}, \quad 0\leqslant i\leqslant N_x-1.
\end{split}
\right.
\end{equation}

We enforce the equations~\eqref{eq_101a}--\eqref{eq_101i} on the collocation points as follows,
\begin{subequations}\label{eq_113}
\begin{align}
& \mathcal L u_{e_{ij}}(\mbs x_p^{e_{ij}}) = S(\mbs x_p^{e_{ij}}), \quad \mbs x_p^{e_{ij}}\in\mbb X, \quad 0\leqslant(i,j,p)\leqslant(N_x-1,N_y-1,Q-1); \label{eq_113a} \\
& u_{e_{ij}}(\mbs y_p) = u_{e_{i+1,j}}(\mbs y_p), \quad \mbs y_p=(X_{i+1},y_p)\in\mbb Y_{ij}^b, \ 0\leqslant(i,j,p)\leqslant(N_x-2,N_y-1,|\mbb Y_{ij}^b|-1); \label{eq_113b} \\
& u_{e_{ij}}(\mbs z_p) = u_{e_{i,j+1}}(\mbs z_p), \quad \mbs z_p=(x_p,Y_{j+1})\in\mbb Z_{ij}^b, \quad 0\leqslant(i,j,p)\leqslant(N_x-1,N_y-2,|\mbb Z_{ij}^b|-1); \label{eq_113c} \\
& \left.\frac{\partial u_{e_{ij}}}{\partial x} \right|_{\mbs y_p} = \left.\frac{\partial u_{e_{i+1,j}}}{\partial x} \right|_{\mbs y_p}, \ \mbs y_p=(X_{i+1},y_p)\in\mbb Y_{ij}^b, \ 0\leqslant(i,j,p)\leqslant(N_x-2,N_y-1,|\mbb Y_{ij}^b|-1); \label{eq_113d} \\
& \left.\frac{\partial u_{e_{ij}}}{\partial y} \right|_{\mbs z_p} = \left.\frac{\partial u_{e_{i,j+1}}}{\partial y} \right|_{\mbs z_p}, \quad \mbs z_p=(x_p,Y_{j+1})\in\mbb Z_{ij}^b,  \quad 0\leqslant(i,j,p)\leqslant(N_x-1,N_y-2,|\mbb Z_{ij}^b|-1); \label{eq_113e} \\
& u_{e_{0j}}(a_1,y_p) = C(a_1,y_p), 
\quad (a_1,y_p)\in\mbb Y_j^L, \quad 0\leqslant (j,p)\leqslant (N_y-1,|\mbb Y_j^L|-1); \label{eq_113f} \\
& u_{e_{N_x-1,j}}(b_1,y_p) = C(b_1,y_p),
\quad (b_1,y_p)\in\mbb Y_j^R, \quad 0\leqslant (j,p)\leqslant (N_y-1,|\mbb Y_j^R|-1); \label{eq_113g} \\
& u_{e_{i0}}(x_p,a_2) = C(x_p,a_2), \quad (x_p,a_2)\in\mbb Z_i^B, \quad 0\leqslant (i,p)\leqslant (N_x-1,|\mbb Z_i^B|-1); \label{eq_113h} \\
& u_{e_{i,N_y-1}}(x_p,b_2) = C(x_p,b_2), \quad(x_p,b_2)\in\mbb Z_i^T, \quad 0\leqslant (i,p)\leqslant (N_x-1,|\mbb Z_i^B|-1). \label{eq_113i}
\end{align}
\end{subequations}
In these equations $u_{e_{ij}}(\mbs x)$ is given by~\eqref{eq_110}, and $\mbb Y_{ij}^b$ and $\mbb Z_{ij}^b$ are defined in~\eqref{eq_108}.  
The equations in~\eqref{eq_113} form a linear algebraic system of equations about the unknown parameters $\bm\Theta$ defined in~\eqref{eq_111}. This system can be solved by the linear least squares method to yield a least squares solution for $\bm\Theta$. The solution to the original system~\eqref{eq_101} is then given by~\eqref{eq_110}.

\begin{remark}\label{rem_4}

The method  presented in this section can be extended to solve nonlinear boundary value problems. Consider the following modified BVP on domain $\Omega$,
\begin{subequations}\label{eq_114}
\begin{align}
& \mathcal L u(\mbs x) + \mathcal N(u(\mbs x)) = S(\mbs x), \quad \mbs x\in\Omega, \\
& u(\mbs x) = C(\mbs x), \quad \mbs x\in\partial\Omega,
\end{align}
\end{subequations}
where $\mathcal N(u)$ denotes a nonlinear operator on $u(\mbs x)$. We can use the same procedure to solve this problem, leading to
\begin{align}\label{eq_115}
& \mathcal L u_{e_{ij}}(\mbs x_p^{e_{ij}}) + \mathcal N(u_{e_{ij}}(\mbs x_p^{e_{ij}})) = S(\mbs x_p^{e_{ij}}), \quad \mbs x_p^{e_{ij}}\in\mbb X, \quad 0\leqslant(i,j,p)\leqslant(N_x-1,N_y-1,Q-1),
\end{align}
together with the equations~\eqref{eq_101b}--\eqref{eq_101i}. By substituting the parametric form of $u_{e_{ij}}(\mbs x)$ for  $C^1$ or $C^0$ FCEs, or for FCEs with no continuity, into~\eqref{eq_115}, we attain an algebraic system about $\bm\Theta$ (unknown coefficients to be determined). This algebraic system, however, is nonlinear with respect to $\bm\Theta$.  
We seek a least squares solution to this nonlinear algebraic system, and solve this system by the nonlinear least squares method~\cite{Bjorck1996,Bjorck2015}. In our implementation, we employ the routine ``lsqnonlin'' in Matlab 
for the nonlinear least squares method. Upon finding $\bm\Theta$, the solution to the original nonlinear BVP is computed based on the parametric form of  $u_{e_{ij}}(\mbs x)$ for different types of FCEs. 

\end{remark}

\begin{remark}

In the above discussions we have assumed  that $\mathcal L$ is a second-order differential operator with respect to both $x$ and $y$. The formulation can be modified to deal with PDEs in which $\mathcal L$ is of first order with respect to one or both variables. 

Suppose $\mathcal L$ is of second order with respect to one variable but  first order with respect to the other. Without loss of generality let us assume that it is second-order in $x$ and first-order in $y$. One such example is the heat equation, $\mathcal L=\frac{\partial}{\partial t} - \frac{\partial^2}{\partial x^2}$ (with $t\equiv y$). Upon partition of $\Omega$ by the elements, the system to be solved consists of equations~\eqref{eq_101a},~\eqref{eq_101f}--\eqref{eq_101h}, and the continuity conditions
\begin{subequations}\label{eq_116}
\begin{align}
& u_{e_{ij}}(X_{i+1},y) = u_{e_{i+1,j}}(X_{i+1},y), \quad y\in[Y_j,Y_{j+1}], \quad 0\leqslant(i,j)\leqslant(N_x-2,N_y-1); \label{eq_116a} \\
& u_{e_{ij}}(x,Y_{j+1}) = u_{e_{i,j+1}}(x,Y_{j+1}), \quad x\in[X_i,X_{i+1}], \quad 0\leqslant(i,j)\leqslant(N_x-1,N_y-2); \label{eq_116b} \\
& \left.\frac{\partial u_{e_{ij}}}{\partial x} \right|_{(X_{i+1},y)} = \left.\frac{\partial u_{e_{i+1,j}}}{\partial x} \right|_{(X_{i+1},y)}, \quad y\in[Y_j,Y_{j+1}], \quad 0\leqslant(i,j)\leqslant(N_x-2,N_y-1). \label{eq_116c} 
\end{align}
\end{subequations}
Note that here we impose  $C^1$ continuity  across the element boundaries along the $x$ direction since $\mathcal L$ is second-order in $x$, and impose $C^0$ continuity  only across the element boundaries along the $y$ direction since $\mathcal L$ is  first-order in $y$. In general if the PDE operator is of $k$-th order along some direction, we would impose $C^{k-1}$ continuity conditions along that direction. 
If the PDE order is higher than two, one can reformulate the PDE into a system of PDEs of at most second order by introducing appropriate auxiliary variables.
Note that in the above we impose the ``boundary'' condition only on one side in the $y$ direction, which is in fact the initial condition.
The resultant system can be solved using  mixed FCEs or $C^0$ FCEs from Section~\ref{sec_232} or using FCEs with no continuity based on the least squares collocation approach in a way analogous to what has been discussed above.

If $\mathcal L$ is first order with respect to both $x$ and $y$, the system to be solved consists of equations~\eqref{eq_101a}, \eqref{eq_101f}, \eqref{eq_101h}, and the continuity conditions
\begin{subequations}\label{eq_117}
\begin{align}
& u_{e_{ij}}(X_{i+1},y) = u_{e_{i+1,j}}(X_{i+1},y), \quad y\in[Y_j,Y_{j+1}], \quad 0\leqslant(i,j)\leqslant(N_x-2,N_y-1); \label{eq_117a} \\
& u_{e_{ij}}(x,Y_{j+1}) = u_{e_{i,j+1}}(x,Y_{j+1}), \quad x\in[X_i,X_{i+1}], \quad 0\leqslant(i,j)\leqslant(N_x-1,N_y-2). \label{eq_117b} 
\end{align}
\end{subequations}
Here we impose $C^0$ continuity  across the element boundaries in both $x$ and $y$, and impose only one ``boundary'' condition in both directions.
The resultant system can be solved by using  $C^0$ FCEs from Section~\ref{sec_232} or using FCEs with no continuity based on the least squares collocation approach in an analogous way.

These discussions signify that the method developed here can be used for solving not only boundary value problems, but also for initial/boundary value problems. In the latter case,  the time variable ``t'' will be treated on the same footing as the spatial variables, and the problem is handled by a space-time approach.

\end{remark}

\begin{remark}\label{rem_6}

Boundary value problems in 1D can be solved using 1D FCEs ($C^1$ FCEs, $C^0$ FCEs, or FCEs with no continuity) and the least squares collocation approach in the same manner. 
%
%
We use the following BVP as an example to outline the idea,
\begin{subequations}
\begin{align}
& \mathcal L u(x) = f(x), \quad x\in\Omega=[a,b]; \\
& u(a) = C_a, \quad u(b) = C_b, 
\end{align}
\end{subequations}
where $\mathcal L$ is a 1D  second-order linear differential operator, and $C_a$ and $C_b$ are prescribed values. Now partition $\Omega$ into $N$ elements by the points $a=X_0<X_1<\cdots<X_N=b$, and define
\begin{equation}\label{eq_131}
u(x) = \left\{
\begin{array}{ll}
u_0(x), & x\in\Omega_0=[X_0,X_1]; \\
\cdots \\
u_i(x), & x\in\Omega_i=[X_i,X_{i+1}]; \\
\cdots \\
u_{N-1}(x), & x\in\Omega_{N-1}=[X_{N-1},X_N].
\end{array}
\right.
\end{equation}
The system to be solved becomes,
\begin{subequations}\label{eq_132}
\begin{align}
& \mathcal Lu_i(x) = f(x), \quad x\in\Omega_i, \quad 0\leqslant i\leqslant N-1; \label{eq_132a} \\
& u_i(X_{i+1}) = u_{i+1}(X_{i+1}), \quad 0\leqslant i\leqslant N-2; \label{eq_132b} \\
& \left. \frac{d u_i}{\partial x} \right|_{X_{i+1}} = \left. \frac{d u_{i+1}}{\partial x} \right|_{X_{i+1}}, \quad 0\leqslant i\leqslant N-2; \label{eq_132c} \\
& u_0(a) = C_a; \label{eq_132d} \\
& u_{N-1}(b) = C_b. \label{eq_132e}
\end{align}
\end{subequations}
This system can be solved using 1D $C^1$ and $C^0$ FCEs or using 1D FCE-NCs, in a way analogous to the 2D problems. We next  use $C^1$ FCEs from Section~\ref{sec_231} for illustration. 

Using the definitions in~\eqref{eq_a63} and in light of the BCs~\eqref{eq_132d}--\eqref{eq_132e}, we have 
$ 
\alpha_0 = C_a$ and $\alpha_N = C_b.
$ 
Therefore, in the parametric form~\eqref{eq_68} for $u_i(x)$, the unknown parameters  are
$
\bm\Theta = \{\ \hat g_{ij}\ \text{for}\ (0,1)\leqslant(i,j)\leqslant(N-1,m); \ 
\alpha_i\ \text{for}\ 1\leqslant i\leqslant N-1; \ 
\beta_i,\ \text{for}\ 0\leqslant i\leqslant N
\ \}.
$
With this parametric form, the equations~\eqref{eq_132b}--\eqref{eq_132e} are automatically satisfied. The equations in~\eqref{eq_132a} are the only ones that need to be solved. By choosing a set of collocation points on each element $\Omega_i$ and enforcing~\eqref{eq_132a} on these collocation points, we obtain a rectangular linear algebraic system about the parameters $\bm\Theta$, which can be solved by the linear least squares method. The solution to the original problem is than given by~\eqref{eq_131} and~\eqref{eq_68}.

When using 1D $C^0$ FCEs to solve~\eqref{eq_132}, the conditions~\eqref{eq_132b} and~\eqref{eq_132d}--\eqref{eq_132e} are automatically satisfied. So only the equations~\eqref{eq_132a} and~\eqref{eq_132c} need to be solved by the least squares method. On the other hand, when using 1D FCE-NCs,  all the equations~\eqref{eq_132a}--\eqref{eq_132e} need to be solved by least squares.


\end{remark}

\begin{remark}

We have used Dirichlet BCs  when describing the method. The method can be readily extended to accommodate other types of BCs, by modifying the parameters  that need to be computed in the FCE formulation or by modifying the FCE form for the elements adjacent to the domain boundary. We next briefly discuss how to handle Neumann and Robin type BCs.

\underline{Neumann BC.} With $C^0$ FCEs and FCEs with no continuity (FCE-NC), one can enforce the Neumann BC in the least squares sense, by adding the BC to those equations that need to be solved. With $C^1$ FCEs, one can enforce the Neumann BC exactly by modifying the set of parameters that need to be computed in the FCE parametric form. On those elements adjacent to the boundary, the $\hat\beta^{(1)}_{ij}$ or $\hat\beta_{ij}^{(2)}$ parameters and the $G_{ij}^{(2)}$ or $H_{ij}^{(2)}$ functions in the $C^1$ FCE form corresponding to the Neumann boundary will be known and set according to the Neumann BC data. This modifies the set of unknown coefficients that need to be computed in the parametric form of the $C^1$ FCE. 

\underline{Robin BC.} With $C^0$ FCEs and FCE-NCs, one can similarly enforce the Robin BC in the least squares sense, by adding the BC to those equations to be solved. With $C^1$ FCEs, one can enforce the Robin BC exactly as follows by modifying the set of parameters that needs to be computed in the FCE parametric form. On those elements adjacent to the Robin boundary, the parameters $\hat\alpha_{ij}$ and $\hat\beta_{ij}^{(1)}$ or $\hat\beta_{ij}^{(2)}$ in the FCE formulation corresponding to this boundary are not independent due to the Robin BC. The functions $G^{(1)}_{ij}(y)$ and $G^{(2)}_{ij}(y)$, or $H^{(1)}_{ij}(x)$ and $H^{(2)}_{ij}(x)$, corresponding to the Robin boundary are similarly not independent. In these pairs, one can choose one parameter (or function) as the unknown to be computed. Then the other parameter (or function) will be determined through the Robin BC. This modifies the set of unknown coefficients that need to be computed in the parametric form of the $C^1$ FCE. 

\end{remark}

\begin{remark}

To arrive at a specific computational technique, the basis functions for the spaces $\mathcal F_m([\xi_1, \xi_2])$ in~\eqref{eq_61} and $\mathcal G([\xi_1,\xi_2]\times[\eta_1,\eta_2],\mathcal M)$ in~\eqref{eq_82} still need to be specified. In principle one can  employ any function space that is sufficiently expressive for this purpose. 
In the majority of numerical simulations, we have employed Legendre polynomials as the bases for  $\mathcal F_m([\xi_1, \xi_2])$ (by first mapping $[\xi_1, \xi_2]$ to the standard interval $[-1,1]$), and employed the tensor products of Legendre polynomials as the bases for $\mathcal G([\xi_1,\xi_2]\times[\eta_1,\eta_2],\mathcal M)$ (by first mapping $[\xi_1,\xi_2]\times[\eta_1,\eta_2]$ to the standard domain $[-1,1]\times[-1,1]$).
We have also implemented non-polynomial bases with quasi-random sinusoidal functions for $\mathcal F_m([\xi_1, \xi_2])$ for  1D problems. These non-polynomial bases will be specified when presenting numerical tests in the next section.
 
\end{remark}

\section{Numerical Examples}
\label{sec_4}

In this section we test the FCE method developed in Sections~\ref{sec_2} and~\ref{sec_3} using several boundary and initial value problems in one and two dimensions. We consider polynomial bases (Legendre polynomials), as well as non-polynomial bases, for representing the free functions in the FCE formulations. Both linear and nonlinear test problems are considered, involving linear/nonlinear differential equations or boundary conditions. In particular, we show several test problems involving linear and nonlinear relative boundary conditions, which can be computed by the FCE method without  difficulty but would be much more challenging to  conventional element-based methods.

\subsection{Free Functions Represented by Polynomial Bases}

We first  represent the free functions involved in the FCE formulation by polynomial bases. 
Specifically, we employ Legendre polynomials $P_n(\xi)$ ($\xi\in[-1,1]$, $n$ denoting the degree) as the bases for representing the free functions in 1D FCEs (such as the ~$g_i(x)$ in Section~\ref{sec_231}), and for representing the free functions associated with the element boundaries in 2D FCEs (such as the~$\hat G_{ij}(y)$, $\hat H_{ij}(x)$, $\hat G^{(1)}_{ij}(y)$, $\hat G^{(2)}_{ij}(y)$, $\hat H^{(1)}_{ij}(y)$, $\hat H^{(2)}_{ij}(x)$ in Section~\ref{sec_232}). We employ the tensor product of  Legendre polynomials as  bases for representing the free functions $g_{e_{ij}}(\mbs x)$ in 2D FCEs.

Specifically, for 1D $C^0$ FCEs the bases for the free functions are
\begin{equation}\label{eq_135}
\mathcal F_p([a,b]) = \{\ P_{n+2}(\xi),\ \xi=2\phi_1(a,b,x)-1,\ x\in[a,b], \ n=0,1,\dots, p-2 \ \},
\end{equation}
where $\phi_1(\chi_1,\chi_2,x)$ is defined in~\eqref{eq_a60} and $p$ denotes the highest polynomial degree. For 1D $C^1$ FCEs the bases for the free functions are
\begin{equation}\label{eq_136}
\mathcal F_p([a,b]) = \{\ P_{n+4}(\xi),\ \xi=2\phi_1(a,b,x)-1,\ x\in[a,b], \ n=0,1,\dots, p-4 \ \}.
\end{equation}
We employ Legendre polynomials with orders $P_2(\xi)$ or $P_4(\xi)$ and higher due to the linear independence requirement for the sets in~\eqref{eq_c61} and~\eqref{eq_c67}.
For 2D $C^0$ FCEs, we employ the bases~\eqref{eq_135} for the free functions $\hat G_{ij}(y)$ and $\hat H_{ij}(x)$, and the following bases for the free functions $g_{e_{ij}}(\mbs x)$,
\begin{equation}
\begin{split}
\mathcal G([a,b]\times[c,d],p) = \{& \ 
P_{m+2}(\xi)P_{n+2}(\eta),\ \xi=2\phi_1(a,b,x)-1,\ \eta=2\phi_1(c,d,y)-1,\\
&\ x\in[a,b],\ y\in[c,d],\ m,n=0,1,\dots,p-2
\ \},
\end{split}
\end{equation}
where $p$ denotes  the highest polynomial degree in each direction. For 2D $C^1$ FCEs, we employ the bases~\eqref{eq_136} for the free functions $\hat G^{(1)}_{ij}(y)$, $\hat G^{(2)}_{ij}(y)$, $\hat H^{(1)}_{ij}(x)$ and $\hat H^{(2)}_{ij}(x)$, and the following bases for the free functions $g_{e_{ij}}(\mbs x)$, \begin{equation}
\begin{split}
\mathcal G([a,b]\times[c,d],p) = \{& \ 
P_{m+4}(\xi)P_{n+4}(\eta),\ \xi=2\phi_1(a,b,x)-1,\ \eta=2\phi_1(c,d,y)-1,\\
&\ x\in[a,b],\ y\in[c,d],\ m,n=0,1,\dots,p-4
\ \}.
\end{split}
\end{equation}

\subsubsection{1D Examples}

\begin{figure}[tb]
\centering 
\subfigure[h-refinement]{
\includegraphics[width=0.35\textwidth]{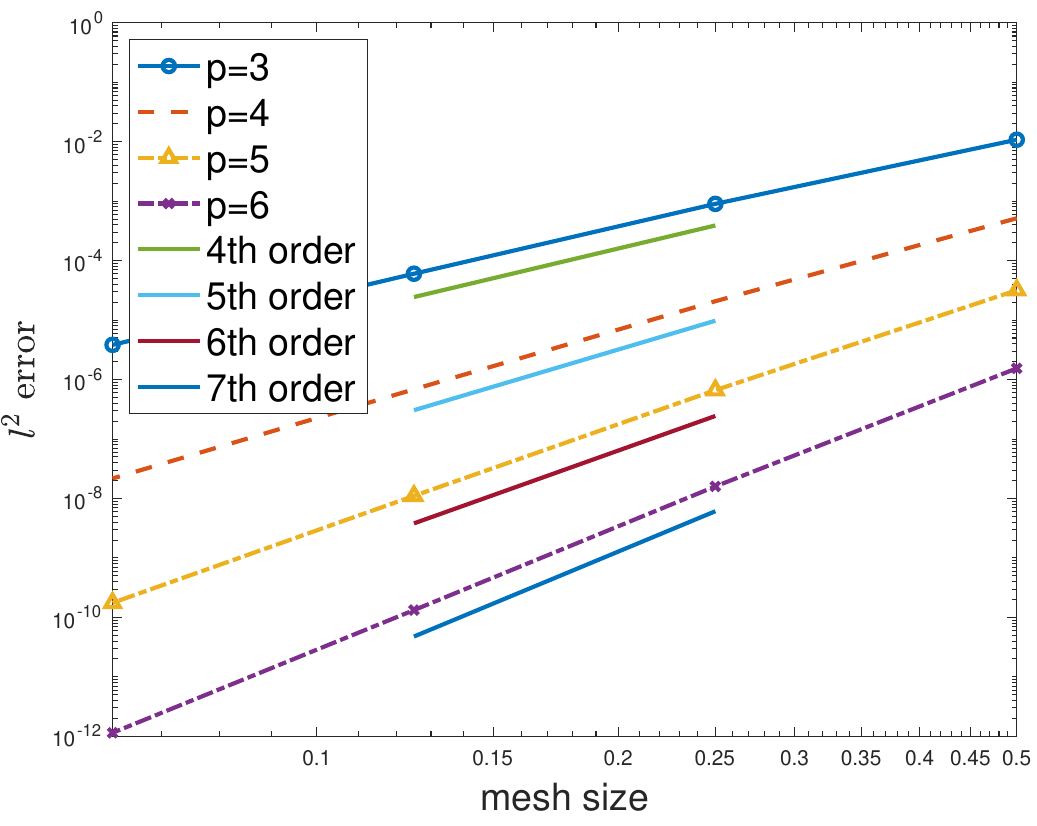}}
\subfigure[p-refinement]{
\includegraphics[width=0.35\textwidth]{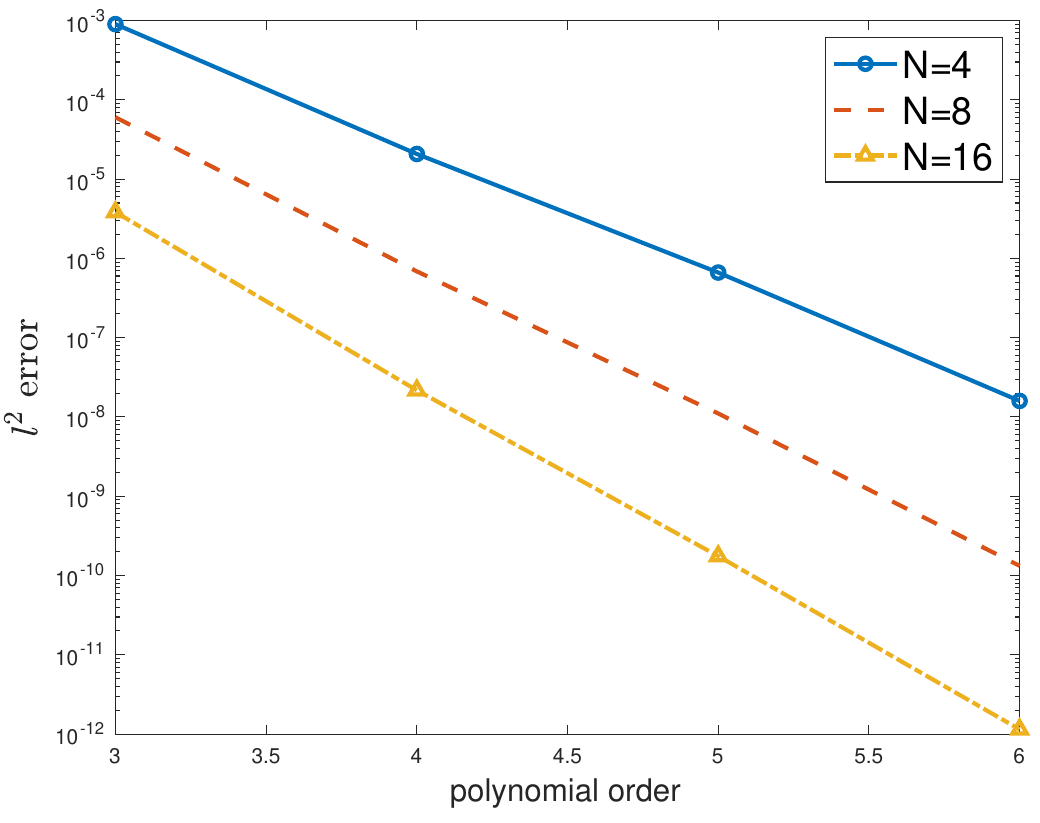}}
\caption{1D Helmholtz equation: $l^{2}$ errors of $C^1$ FCEs as a function of (a) the element size, and (b) the polynomial order. In (a), the polynomial order ($p$) is fixed while the element size is varied (h-refinement), showing a convergence rate of ($p+1$). In (b), the number of elements ($N$) is fixed while the polynomial order $p$ is varied (p-refinement), showing an exponential convergence rate. $(p+2)$ collocation points per element (Gauss-Lobatto-Legendre points).
} 
\label{fg_a1}
\end{figure}

\begin{table}[tb]
\centering
\begin{tabular}{l|r| lll}
\hline
 collocation points & polynomial order & FCE-$C^1$ &  FCE-$C^0$  & FCE-NC  \\  \hline
 Gauss-Lobatto-Legendre & $p=3$   & 9.01E-4  & 1.15E-3  & 3.35E-2  \\
  points & $4$  & 2.08E-5  & 2.15E-5 &  2.09E-5 \\
 & $5$  & 6.63E-7  & 6.63E-7  & 6.63E-7 \\
 & $6$  & 1.59E-8  & 1.59E-8  & 1.59E-8 \\ \hline
 uniform points & $p=3$  & 1.91E-2  & 2.10E-2 &  3.18E-2 \\
   & $4$  & 3.73E-4  & 3.83E-4 &  3.78E-4 \\
 & $5$ &  4.02E-5&  4.02E-5  & 4.02E-5 \\
 & $6$ &  5.01E-7  & 5.01E-7  & 5.01E-7 \\
\hline
\end{tabular}
\caption{1D Helmholtz equation:  $l^2$ errors computed by $C^1$ and $C^0$ FCEs and FCE-NCs, corresponding to several polynomial orders ($p$) with different types of collocation points. $N=4$ uniform elements in domain, and $q=p+2$ collocation points per element.
}
\label{tab_a1}
\end{table}

\paragraph{Helmholtz Equation}

Consider the domain $\Omega = [0,1]$ and the following BVP with the Helmholtz equation on $\Omega$,
\begin{subequations}\label{eq_142}
\begin{align}
& \frac{d^2u}{dx^2} - u = -(1+\pi^2)\cos(\pi x), \quad x\in\Omega, \\
& u(0) = 1, \quad
\left.\frac{du}{dx}\right|_{x=1} = 0,
\end{align}
\end{subequations}
where $u(x)$ is to be computed. This problem has an exact solution $u_{ex}(x) = \cos(\pi x)$.

To solve this problem, we partition $\Omega$ into $N$ uniform elements and impose $C^1$ continuity  across the element boundaries (see Remark~\ref{rem_6}). We compute the solution by using 1D $C^1$ and $C^0$ FCEs (denoted by ``FCE-$C^1$'' and ``FCE-$C^0$''), as well as FCEs with no  continuity (denoted by ``FCE-NC''). The free functions involved in the FCE formulations are represented by Legendre polynomials in each element. We use $p$ to denote the highest degree of the Legendre polynomial, which will be referred to as the polynomial order hereafter. Within each element we employ $(p+2)$ collocation points with two types of distributions: (i) uniform grid points, and (ii) Gauss-Lobatto-Legendre quadrature points.  

After the problem is solved, we evaluate the computed solution $u(x)$ on a finer set of $Q_u$ uniform grid points per element, as well as on a finer set of $Q_q$ Gauss-Lobatto-Legendre quadrature points in each element. Then we compute the $l^{\infty}$  and the $l^2$ errors of $u(x)$ as follows,
\begin{equation}
l^{\infty}\text{-error} = \max_{\substack{1\leqslant i\leqslant Q_u,\\1\leqslant e\leqslant N}}\{ |u(x_i^e)-u_{ex}(x_i^e)| \}, \quad
l^2\text{-error} = \sqrt{
\sum_{e=1}^N\sum_{i=1}^{Q_{q}}w_{i}[u(y_i^e)-u_{ex}(y_i^e)]^2
},
\end{equation}
where $x_i^e$ ($1\leqslant i\leqslant Q_u$) denote the finer uniform grid points in element $e$ ($1\leqslant e\leqslant N$), $y_i^e$ ($1\leqslant i\leqslant Q_q$) denote the finer Gauss-Lobatto-Legendre points (with quadrature weight $w_i$) within element $e$, and $u_{ex}(x)$ denotes the exact solution. We have employed $Q_u=20$ and $Q_q=12$ in the simulations.
We will refer to the resolution increase due to the decrease in the element size (while the polynomial order is fixed) as the h-refinement, and that due to the increase in the polynomial order (while the element size is fixed) as p-refinement, following conventions from the traditional spectral or hp finite element techniques.

Figure~\ref{fg_a1} illustrates the convergence behavior of the method with $C^1$ FCEs during the h-refinement  and the p-refinement. The two plots show the $l^{2}$ errors as a function of the element size (h-refinement) and the polynomial order in each element (p-refinement), respectively. Note that with $C^1$ FCEs, the continuity of the computed solution $u(x)$ and its derivative $u'(x)$ across the element boundaries is exactly satisfied. For the h-refinement we observe  $(p+1)$-th order convergence rate with increasing number of elements. For the p-refinement, we observe the exponential convergence rate with increasing polynomial order.

In Table~\ref{tab_a1} we compare the $l^{\infty}$ and $l^2$ errors obtained using $C^1$ and $C^0$ FCEs and FCEs with no intrinsic continuity (FCE-NC), corresponding to $N=4$ uniform elements and several polynomial orders per element, with quadrature points and uniform grid points as the collocation points. Note that with $C^0$ FCEs the continuity of the computed solution $u(x)$ across the element boundaries is satisfied exactly, while the continuity of its derivative $u'(x)$ is only enforced in the least squares sense across the element boundaries. With FCE-NCs, on other hand, the continuity of both $u(x)$ and $u'(x)$ is only enforced in the least squares sense across the element boundaries. We observe that the errors decrease exponentially as the polynomial order increases, with both uniform collocation points and quadrature collocation points, for all three types of FCEs. The error values obtained using the three types of FCEs are comparable for this problem. The error levels obtained using quadrature collocation points are notably lower than those obtained using uniform collocation points.

\begin{table}[tb]
\centering
\begin{tabular}{l | ll|ll}
\hline
collocation & quadrature & points & uniform & points \\ \cline{2-5}
points ($q$) & FCE-$C^0$ & FCE-NC & FCE-$C^0$ & FCE-NC \\ \hline
$3$ & 6.93E-1 & 7.18E-1 & 6.93E-1 & 7.18E-1 \\
$4$ & 1.53E-3 & 1.53E-3 & 6.64E-1 & 6.41E-1 \\
$5$ & 2.84E-7 & 2.84E-7 & 3.60E-7 & 3.60E-7 \\
$6$ & 1.08E-7 & 1.08E-7 & 2.85E-7 & 2.85E-7 \\
$7$ & 1.08E-7 & 1.08E-7 & 2.54E-7 & 2.54E-7 \\
\hline
\end{tabular}
\caption{IVP: $l^2$ errors obtained with FCE-$C^0$ and FCE-NC as a function of the number of collocation points ($q$), with Gauss-Lobatto-Legendre quadrature points and uniform grid points as the collocation points. $N=4$ uniform elements, and polynomial order $p=5$ for each element.
}
\label{tab_2}
\end{table}

\begin{figure}[tb]
\centering 
\subfigure[h-refinement]{
\includegraphics[width=0.35\textwidth]{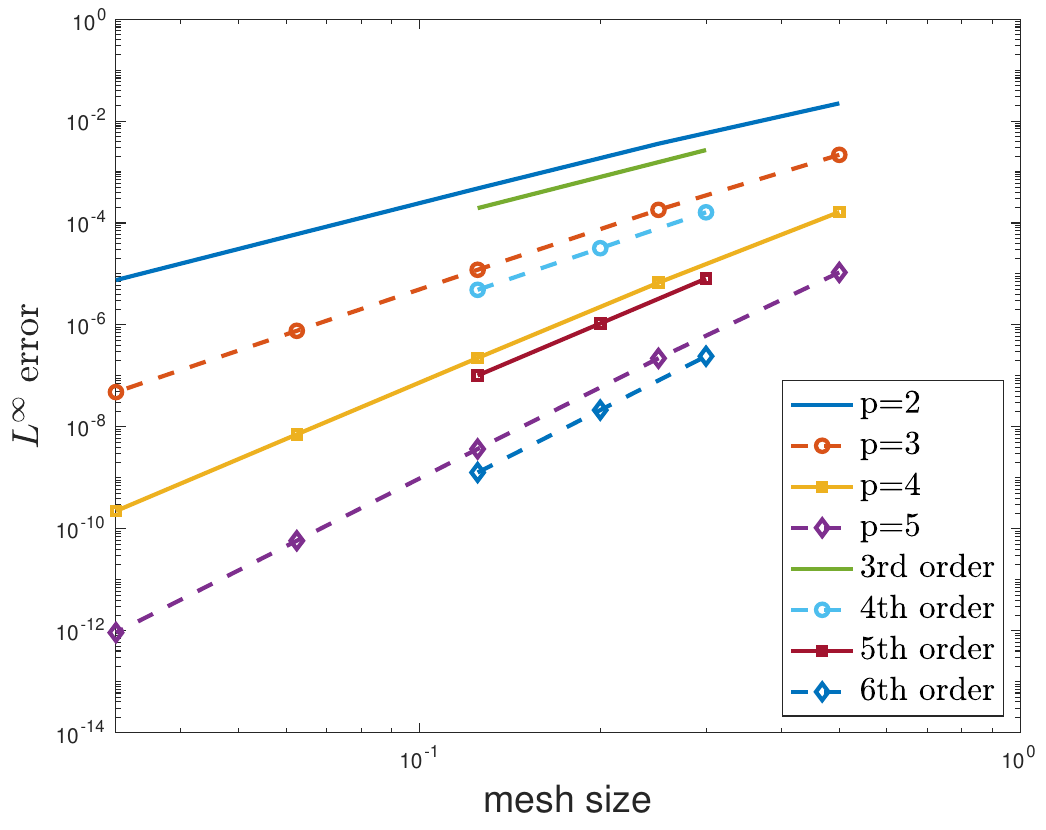}}
\subfigure[p-refinement]{
\includegraphics[width=0.35\textwidth]{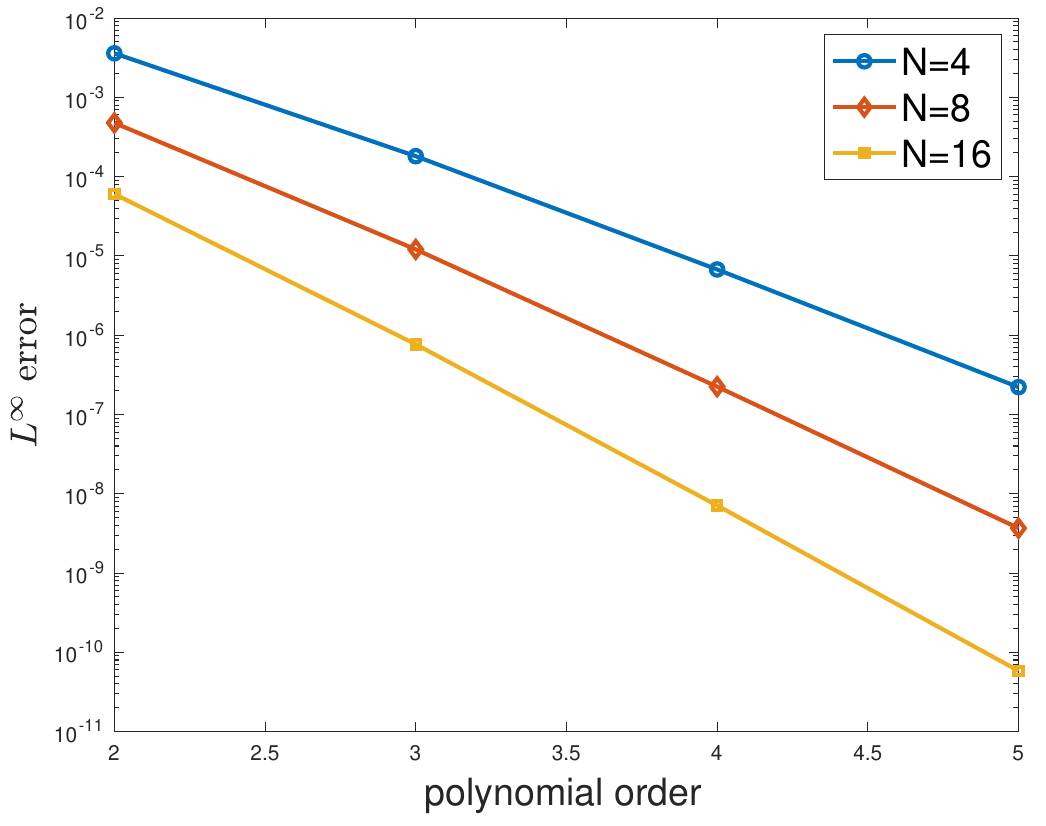}}
\caption{IVP: $l^{\infty}$ errors of  FCE-$C^0$ as a function of (a) the element size, and (b) the polynomial order. $q=p+2$ Gauss-Lobatto-Legendre  collocation points per element.
} 
\label{fg_2}
\end{figure}

\begin{table}[tb]
\centering
\begin{tabular}{l|l| ll| ll}
\hline
 collocation & polynomial &  FCE-$C^0$ & & FCE-NC &  \\ \cline{3-6}
 points & order & $l^{\infty}$-error & $l^2$-error & $l^{\infty}$-error & $l^2$-error  \\ \hline
 quadrature & $2$ & 1.95E-3 & 3.59E-3 & 1.95E-3 & 3.60E-3 \\
 points & $3$ &  8.86E-5 & 1.81E-4 & 8.86E-5 & 1.81E-4 \\
  & $4$ &  3.36E-6 & 6.74E-6 & 3.36E-6 & 6.74E-6 \\
 & $5$ &  1.08E-7 & 2.21E-7 & 1.08E-7 & 2.21E-7\\
  \hline
 uniform & $2$ &  9.22E-3 & 1.41E-2 & 8.70E-3 & 1.68E-2 \\
 points & $3$ &  2.34E-4 & 5.48E-4 & 2.41E-4 & 5.62E-4 \\
 & $4$ &  1.65E-5 & 2.86E-5 & 1.65E-5 & 2.86E-5 \\
 & $5$ &  2.54E-7 & 6.03E-7 & 2.54E-7 & 6.03E-7 \\
\hline
\end{tabular}
\caption{IVP: $l^{\infty}$ and $l^2$ errors computed by $C^0$ FCEs and FCE-NCs, corresponding to several polynomial orders ($p$) with different types of collocation points. $N=4$ uniform elements, and $q=p+2$ collocation points per element.
}
\label{tab_3}
\end{table}

\paragraph{An Initial Value Problem}

We consider the initial value problem (IVP),
\begin{subequations}\label{eq_143}
\begin{align}
& \frac{du}{dt} + u = e^{\sin(\pi t)}[1 + \pi\cos(\pi t)], \quad t\in\Omega = [0,1], 
\label{eq_143a} \\
& u(0) = 1, \label{eq_143b}
\end{align}
\end{subequations}
where $u(t)$ is the solution to be computed. This problem has an exact solution $u_{ex}(t)=e^{\sin(\pi t)}$.


To solve the problem~\eqref{eq_143}, we treat it as a ``boundary'' value problem, with the condition~\eqref{eq_143b}  applied to the boundary $t=0$. We partition the domain $\Omega=[0,1]$ into $N$ uniform elements, and impose $C^0$ continuity  on $u(t)$ across the element boundaries. We employ 1D $C^0$ FCEs and FCE-NCs to solve the problem. 
Note that with $C^0$ FCEs the $C^0$ continuity across the element boundaries is enforced exactly and with FCE-NCs it is enforced only in the least squares sense (i.e.~approximately). 
We have used both the Gauss-Lobatto-Legendre quadrature points and the uniform grid points as the collocation points within each element. The number of collocation points and the polynomial order within each element is denoted by $q$ and $p$, respectively.


Table~\ref{tab_2} illustrates the effect of the number of collocation points on the accuracy. It shows the $l^2$ errors of FCE-$C^0$ and FCE-NC corresponding to a range of collocation points, with both Gauss-Lobatto-Legendre collocation points and unform collocation points. In these tests the number of elements is fixed to $N=4$, and the polynomial order is fixed to $p=5$. As $q<p$ the  result is observed to be not accurate or less accurate. With $q>p$ we observe an accurate result, and the accuracy remains essentially the same as $q$ further increases. These characteristics appear common to the test problems  when polynomial bases are employed to represent the free functions in FCE. Hereafter we will generally employ $q=p+2$ collocation points with the FCE method.

Figure~\ref{fg_2} illustrates the convergence of the FCE method with $C^0$ FCEs for solving~\eqref{eq_143} during the $h$-refinement and $p$-refinement. The plots show the $l^{\infty}$ errors of FCE-$C^0$ as a function of the element size (Figure~\ref{fg_2}(a)) and the polynomial order (Fgiure~\ref{fg_2}(b)). We observe the $(p+1)$-th order convergence rate with respect to the element size and the exponential rate with respect to the polynomial order. 

Table~\ref{tab_3} lists the $l^{\infty}$ and $l^2$ errors obtained with FCE-$C^0$ and FCE-NC corresponding to a range of polynomial orders, using Gauss-Lobatto-Legendre quadrature points and uniform grid points as the collocation points. 
The accuracy  obtained with these two types of FCEs is comparable.
The results obtained using uniform collocation points are generally less accurate than those with quadrature collocation points.

\begin{figure}[tb]
\centering 
\subfigure[h-refinement]{
\includegraphics[width=0.35\textwidth]{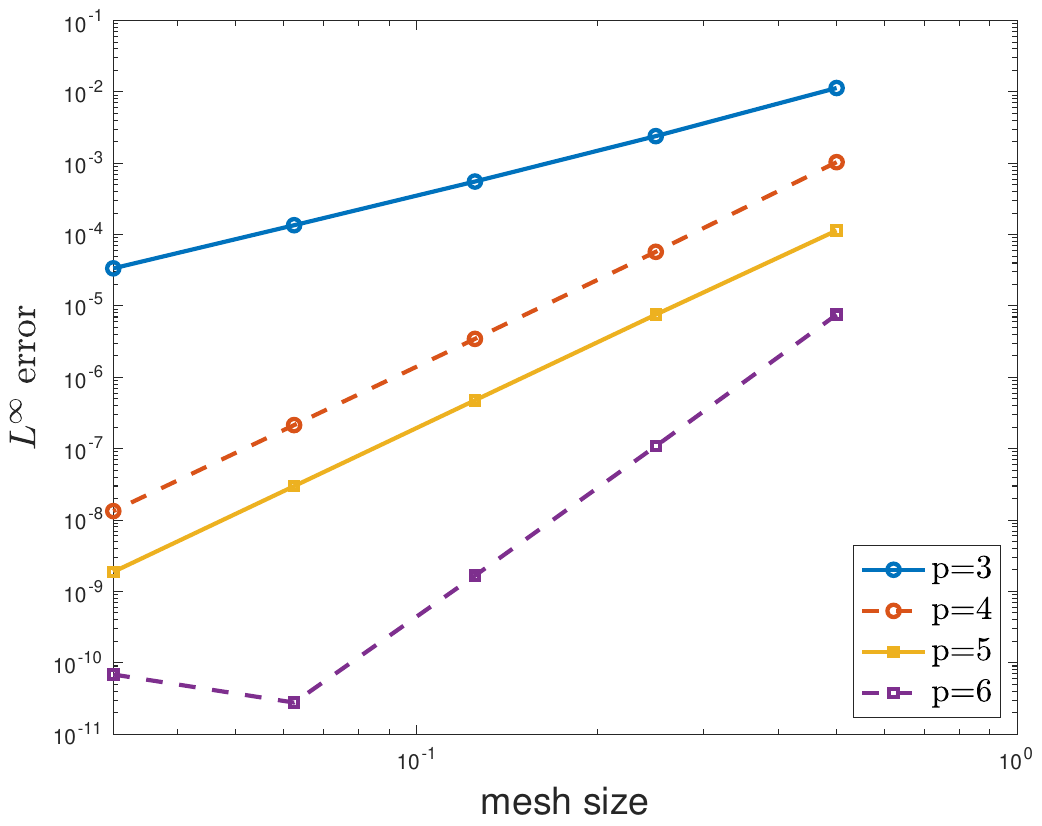}}
\subfigure[p-refinement]{
\includegraphics[width=0.35\textwidth]{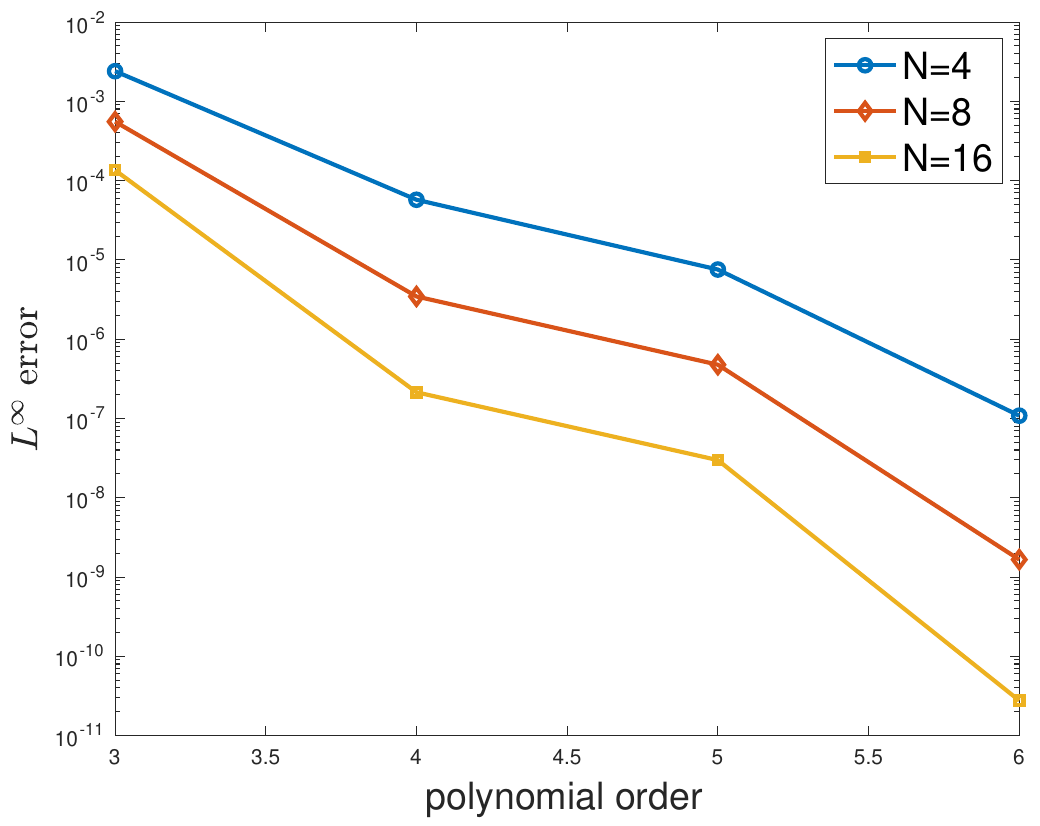}}
\caption{1D nonlinear Helmholtz equation: $l^{\infty}$ errors of FCE-$C^1$ as a function of (a) the element size, and (b) the polynomial order. $N$ uniform elements, and $q=p+2$ uniform collocation points per element.
} 
\label{fg_3}
\end{figure}

\paragraph{Nonlinear Helmholtz Equation}

We next consider the   BVP with the nonlinear Helmholtz equation,
\begin{subequations}
\begin{align}
& \frac{d^2u}{dx^2} - u + \sin(u) = f(x), \quad x\in\Omega=[0,1], \\
& u(0) = u(1) = 1,
\end{align}
\end{subequations}
where the source term $f(x)$ is chosen such that this BVP has the exact solution $u_{ex}(x)=1+\frac12\sin(\pi x)$.

We partition $\Omega$ into $N$ uniform elements, and impose $C^1$ continuity  across the element boundaries. We employ 1D $C^1$ FCEs to solve the problem, and use $q=p+2$ uniform collocation points in each element, where $p$ denotes the  order of the Legendre polynomials for representing the FCE free functions. After enforcing the problem on the  collocation points, the resultant nonlinear algebraic system is solved by the nonlinear least squares method (see Remark~\ref{rem_4}).

Figure~\ref{fg_3} illustrates the convergence behavior of the FCE method for solving this nonlinear problem. The two plots show the $l^{\infty}$ errors of FCE-$C^1$ as a function of the element size (h-refinement) and the polynomial order (p-refinement). The FCE method has evidently captured the solution to this nonlinear problem accurately.

\subsubsection{2D Examples}

\begin{table}
\centering
\begin{tabular}{llll} \hline 
  $p$  & FCE-$C^1$ & FCE-$C^0$ & FCE-NC  \\ \hline
 $3$& 1.50E-2& 5.04E-2  & 1.36E-1  \\ 
 $5$&2.30E-4& 6.24E-4  & 1.84E-3  \\ 
 $7$&1.90E-6& 5.39E-6  & 3.27E-5  \\ 
 $9$&1.10E-8& 3.07E-8  & 1.92E-7  \\ 
 $11$&4.05E-11& 1.18E-10  & 7.30E-10  \\ \hline
\end{tabular}\par\smallskip
\caption{2D Helmholtz equation: $l^{\infty}$ errors of 2D FCE-$C^1$, FCE-$C^0$ and FCE-NC corresponding to a range of polynomial orders. $(N_x,N_y)=(2,1)$ elements, and $q=p+2$ Gauss-Lobatto-Legendre collocation points in each direction.
}
\label{tab_4}
\end{table}

\begin{figure}[tb]
\centering 
\subfigure[h-refinement]{
\includegraphics[width=0.35\textwidth]{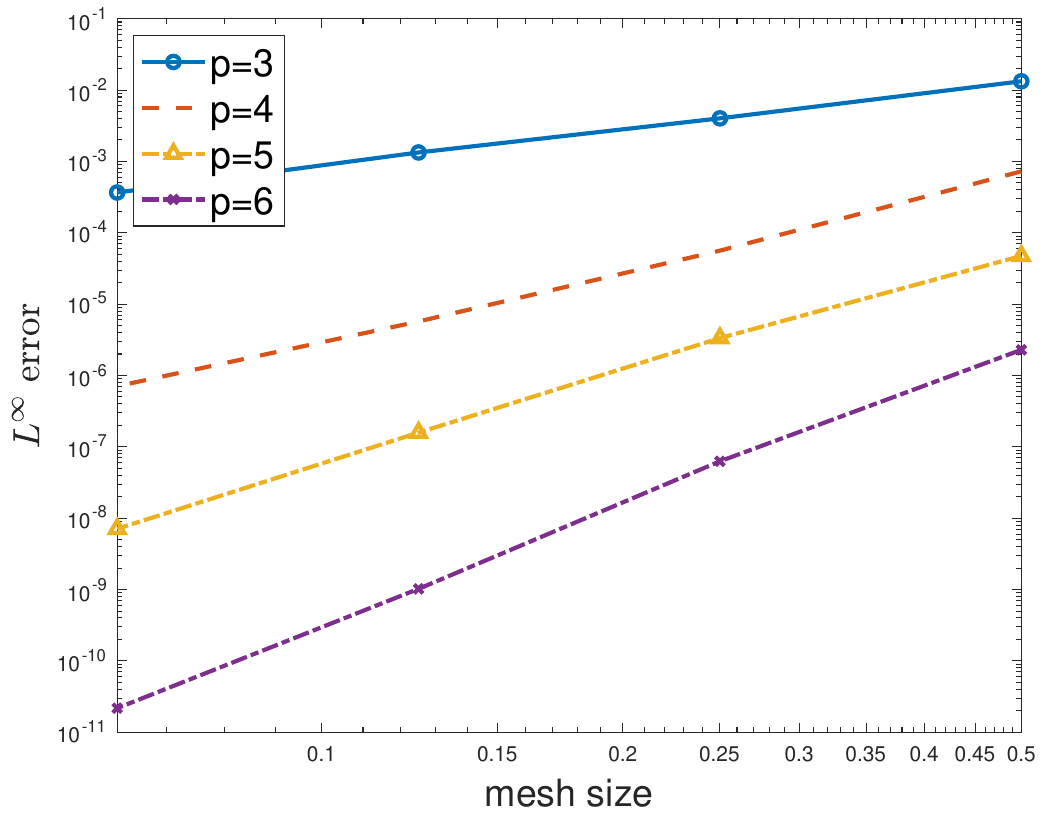}}
\subfigure[p-refinement]{
\includegraphics[width=0.35\textwidth]{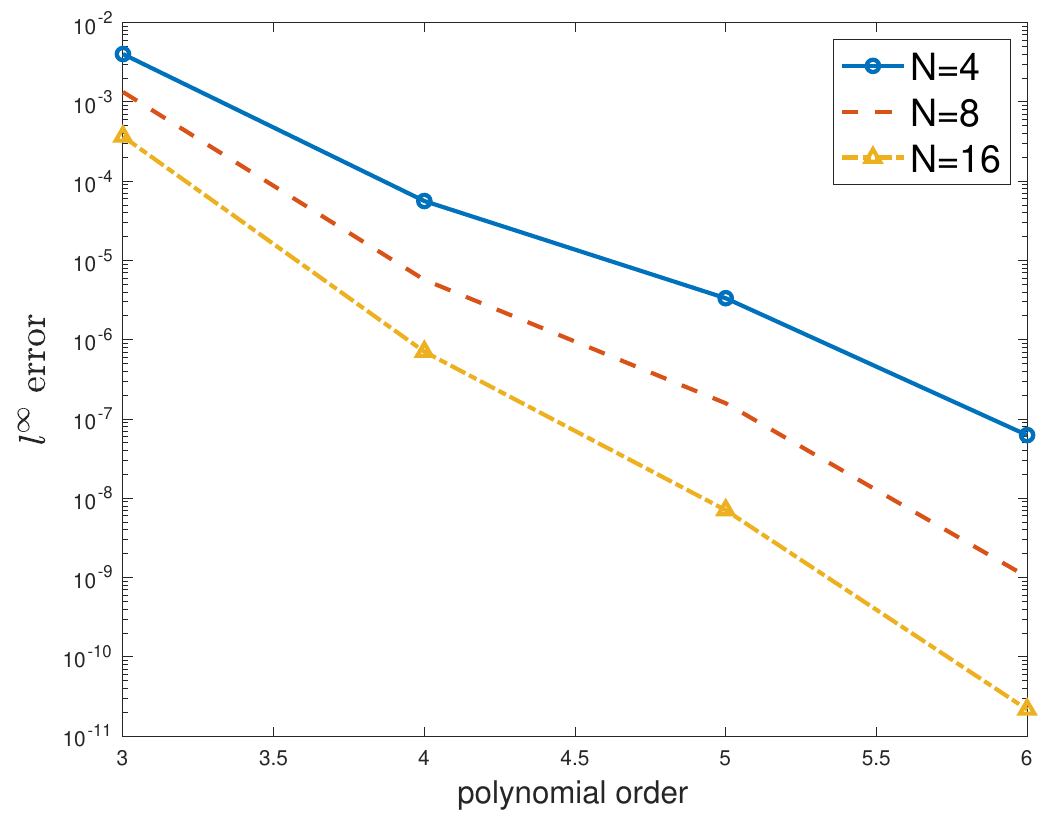}}
\caption{2D Helmholtz equation: $l^{\infty}$ errors of FCE-$C^0$ versus (a) the element size in each direction, and (b) the polynomial order. $N$ uniform elements per direction. 
} 
\label{fg_4}
\end{figure}

\begin{table}[tb]
\centering
\begin{tabular}{l|ll|ll} \hline 
polynomial & $(N_x,N_y)=(4,4)$ & & $(N_x,N_y)=(8,8)$ & \\ \cline{2-5}
 order ($p$)  & FCE-$C^0$ & FCE-NC & FCE-$C^0$ & FCE-NC  \\ \hline
 3 & 4.02E-3 & 2.27E-3 & 1.34E-3 & 3.68E-3  \\ 
 4 & 5.63E-5 & 1.29E-3 & 5.71E-6 & 8.30E-5 \\
 5 & 3.34E-6 & 2.18E-5 & 1.58E-7 & 7.88E-7 \\
 6 & 6.27E-8 & 1.22E-6 & 1.02E-9 & 1.91E-8 \\
 \hline
\end{tabular}\par\smallskip
\caption{2D Helmholtz equation: $l^{\infty}$ errors of FCE-$C^0$ and FCE-NC corresponding to several polynomial orders. 
}
\label{tab_5}
\end{table}

\paragraph{2D Helmholtz Equation}

Consider the 2D domain $\Omega=[0,1]\times[0,1]$, and the following BVP on $\Omega$ with the 2D Helmholtz equation,
\begin{subequations}\label{eq_146}
\begin{align}
& \nabla^2 u - u = f(x,y), \quad
(x,y)\in\Omega, \\
& u(0,y) = g_1(y), \quad u(1,y) = g_2(y), \quad u(x,0) = g_3(x), \quad u(x,1) = g_4(x),
\end{align}
\end{subequations}
where $u(x,y)$ is the field function to solve, and $f$ and $g_i$ ($1\leqslant i\leqslant 4$) are prescribed source term or boundary distributions. We choose $f$ and $g_i$ such that this problem has the exact solution $u_{ex}(x,y)=\sin(\pi x)\cos(\pi y)$. 

We partition $\Omega$ into $N_x$ and $N_y$ uniform elements in the $x$ and $y$ directions, respectively.  We impose $C^1$ continuity  across the element boundaries along both $x$ and $y$ directions. 2D $C^1$ and $C^0$ FCEs and FCE-NCs are employed to solve the problem. 
Let $p$  denote the polynomial order of the Legendre polynomials in each direction, and $q$  denote the number of collocation points in each direction. Gauss-Lobatto-Legendre quadrature points are used as the collocation points. 

After the problem is solved, we evaluate the FCE solution on a finer set of $Q_u\times Q_u$  uniform grid points within each element, and compare the FCE solution with the exact solution on these points to obtain the $l^{\infty}$ error on $\Omega$. $Q_u=20$ is used in the errors reported below.

Table~\ref{tab_4} lists the $l^{\infty}$ errors on a mesh of $(N_x,N_y)=(2,1)$ elements obtained using 2D FCE-$C^0$, FCE-$C^1$ and FCE-NC, corresponding to a range of polynomial orders. 
The errors decrease exponentially with respect to the polynomial order ($p$) for all three types of elements.  
The FCE-$C^1$ results are more accurate than those of FCE-$C^0$, which in turn are more than accurate than those of FCE-NC.

Simulation results obtained on larger meshes are illustrated by Figure~\ref{fg_4} and Table~\ref{tab_5}. Figure~\ref{fg_4} shows the $l^{\infty}$ errors of FCE-$C^0$ as a function of the element size (plot (a)), and as a function of the polynomial order (plot (b)). The mesh size ranges from $(N_x,N_y)=(2,2)$ to $(N_x,N_y)=(16,16)$ in Figure~\ref{fg_4}(a). One can observe a rapid convergence in the simulation results as the element size or the polynomial order increases.
Table~\ref{tab_5} lists the $l^{\infty}$ errors obtained by FCE-$C^0$ and FCE-NC on two meshes, with $(N_x,N_y)=(4,4)$ and $(8,8)$, corresponding to a range of polynomial orders. One can observe the exponential decrease in the errors with increasing polynomial order, with
 FCE-$C^0$  generally more accurate than  FCE-NC.

\paragraph{Advection Equation}

\begin{figure}[tb]
\centering 
\subfigure[h-refinement]{
\includegraphics[width=0.35\textwidth]{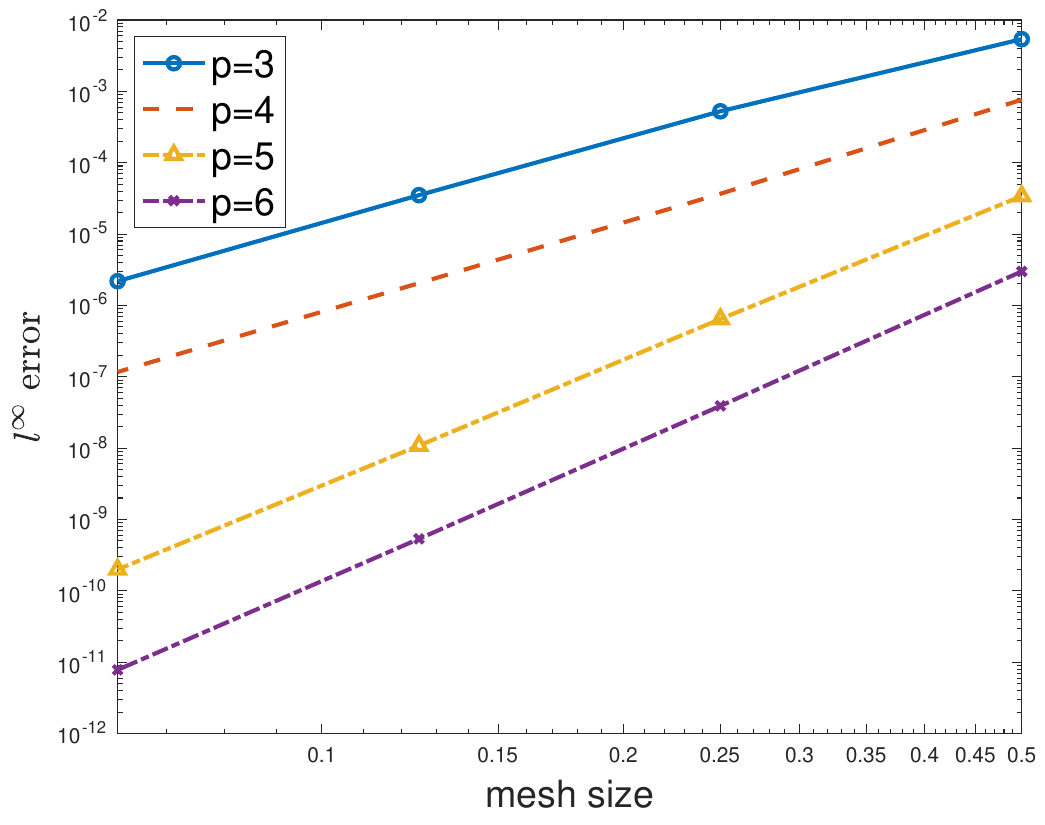}}
\subfigure[p-refinement]{
\includegraphics[width=0.35\textwidth]{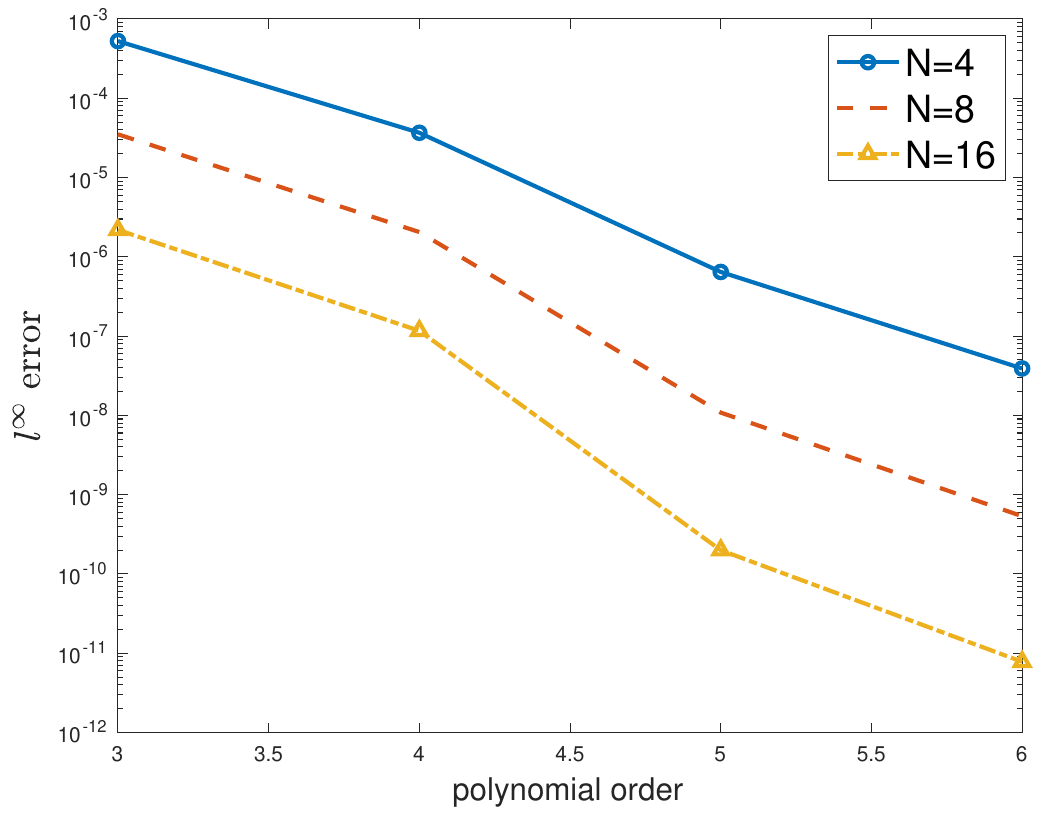}}
\caption{Advection equation: $l^{\infty}$ errors of FCE-$C^0$ versus (a) the element size in each direction, and (b) the polynomial order. $q=p+2$ Gauss-Lobatto-Legendre collocation points; $N$ denotes the number of elements per direction.
} 
\label{fg_5}
\end{figure}

\begin{table}[tb]
\centering
\begin{tabular}{l|l|ll|ll} \hline 
collocation & polynomial & $(N_x,N_t)=(4,4)$ & & $(N_x,N_t)=(8,8)$ & \\ \cline{3-6}
points & order ($p$)  & FCE-$C^0$ & FCE-NC & FCE-$C^0$ & FCE-NC  \\ \hline
Gauss-Lobatto- & 3 & 5.26E-4 & 1.12E-3 & 3.51E-5 & 8.23E-5  \\ 
Legendre points & 4 & 3.67E-5 & 4.84E-5 & 2.05E-6 & 2.46E-6 \\
& 5 & 6.45E-7 & 1.22E-6 & 1.08E-8 & 3.56E-8 \\
& 6 & 3.90E-8 & 4.88E-8 & 5.35E-10 & 9.05E-10 \\
\hline
uniform points & 3 & 9.25E-4 & 1.10E-3 & 6.24E-5 & 7.45E-5 \\
& 4 & 7.26E-5 & 9.89E-5 & 3.77E-6 & 5.10E-6 \\
& 5 & 1.27E-6 & 2.58E-6 & 2.52E-8 & 4.75E-8 \\
& 6 & 6.80E-8 & 1.22E-7 & 9.64E-10 & 1.64E-9 \\
 \hline
\end{tabular}\par\smallskip
\caption{Advection equation: $l^{\infty}$ errors of 2D FCE-$C^0$ and FCE-NC versus the polynomial orders, with Gauss-Lobatto-Legendre and uniform collocation points. 
}
\label{tab_6}
\end{table}

Consider the spatial-temporal domain $(x,t)\in\Omega=[0,1]\times [0,1]$ and the initial/boundary value problem,
\begin{subequations}\label{eq_147}
\begin{align}
& \frac{\partial u}{\partial t} + 2\frac{\partial u}{\partial x} = f(x,t), \quad (x,t)\in\Omega \label{eq_147a} \\
& u(0,t) = g_1(t), \quad t\in[0,1], \\
& u(x,0) = g_2(x), \quad x\in[0,1],
\end{align}
\end{subequations}
where $u(x,t)$ is the field to be computed.  The source term $f(x,t)$, and the boundary and initial data $g_1(t)$ and $g_2(x)$, are chosen such that this problem has an exact solution $u_{ex}(x,t) = e^{\cos(\pi x)}\sin(\pi t)$. 


We solve~\eqref{eq_147} by the space-time approach, and treat the time variable $t$ in the same way as the spatial coordinate $x$. So this is effectively a 2D problem with respect to $(x,t)$. We partition $\Omega$ into uniform elements, with $N_x$ elements in $x$ and $N_t$ elements in $t$. Since equation~\eqref{eq_147a} is first order in both $x$ and $t$, we impose $C^0$ continuity  across the element boundaries in both  directions. We use 2D $C^0$ FCEs  and FCE-NCs  to solve the resultant system. 
Both Gauss-Lobatto-Legendre type and uniform collocation points are tested, and the number of collocation points is set to be $q=p+2$, where $p$ denotes the polynomial order.  

Figure~\ref{fg_5} illustrates the convergence behavior of the FCE-$C^0$ method for solving the advection equation. Figure~\ref{fg_5}(a) shows the $l^{\infty}$ errors  as a function of the element size corresponding to several element orders. 
We observe a convergence with decreasing element size. The rate of convergence appears not quite regular. The order of convergence in the h-refinement is effectively $4$ with both polynomial orders $p=3$ and $p=4$, and it is effectively $6$ with both polynomial orders $p=5$ and $p=6$.
Figure~\ref{fg_5}(b) shows the $l^{\infty}$ errors of FCE-$C^0$ as a function of the polynomial order $p$ for several mesh sizes. The errors are observed to decrease essentially exponentially with increasing polynomial order.

Table~\ref{tab_6} lists the $l^{\infty}$ errors obtained with  FCE-$C^0$ and FCE-NC on two meshes, $(N_x,N_t)=(4,4)$ and $(8,8)$, corresponding to several polynomial orders. The results computed with both Gauss-Lobatto-Legendre and uniform collocation points are provided in the table. The FCE-$C^0$ errors and the FCE-NC errors decrease exponentially with increasing polynomial order, obtained by both Gauss-Lobatto-Legendre and uniform collocation points. FCE-$C^0$ is   in general   more accurate than  FCE-NC, and the results obtained with  Gauss-Lobatto-Legendre  points are  more accurate than those obtained with uniform collocation points.

\subsection{Free Functions Represented by Non-Polynomial Bases}

The FCE formulation in Section~\ref{sec_3} ensures that the $C^0$ or $C^1$ continuity  across the element boundaries is exactly satisfied, irrespective of the free functions or  free parameters involved therein. This enables the use of  a function space other than the  polynomial space for representing the free functions in  FCE. In this subsection we show  simulation results obtained by representing the FCE free functions with sinusoidal functions within each element for solving boundary and initial value problems. We would like to emphasize that Dirichlet type BCs are involved in  the overall problem, and yet the free functions in each element are expanded in terms of a set of sinusoidal functions. We use 1D problems in this subsection for simplicity. 

Specifically, we employ the following bases for the free functions in 1D $C^0$ and $C^1$ FCEs,
\begin{equation}\label{eq_148}
\mathcal F_p([a,b]) = \text{span}\{\ 
\sin(\xi_i\phi_1(a,b,x) + \eta_i),\ 
\text{where}\ \xi_i = 2\sqrt{i+1},\ 
\eta_i = \sin(i+1)+0.1,\ 
0\leqslant i\leqslant p-1
\ \},
\end{equation}
where $\phi_1(\chi_1,\chi_2,x)$ is defined in~\eqref{eq_a60}, and $p$ denotes the number of basis functions and will be referred to as the element order in this subsection.

\subsubsection{1D Poisson Equation}

\begin{figure}[tb]
\centering 
\subfigure[h-refinement]{
\includegraphics[height=0.3\textwidth]{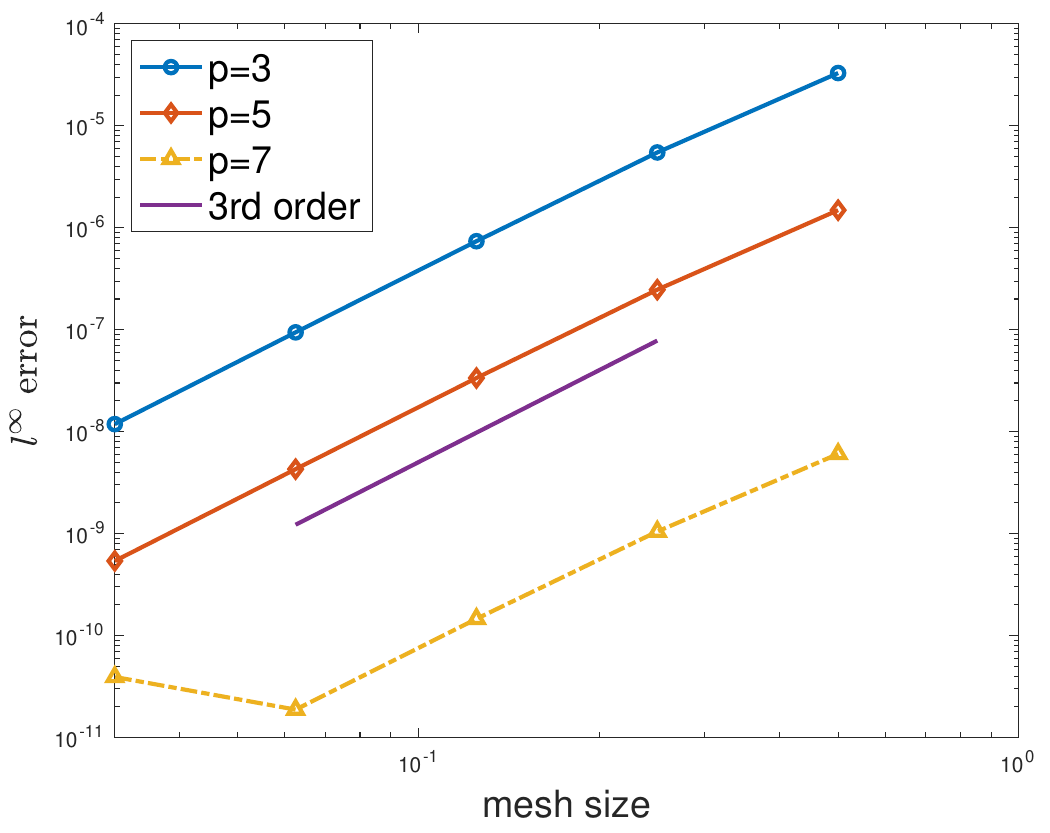}}
\subfigure[p-refinement]{
\includegraphics[height=0.3\textwidth]{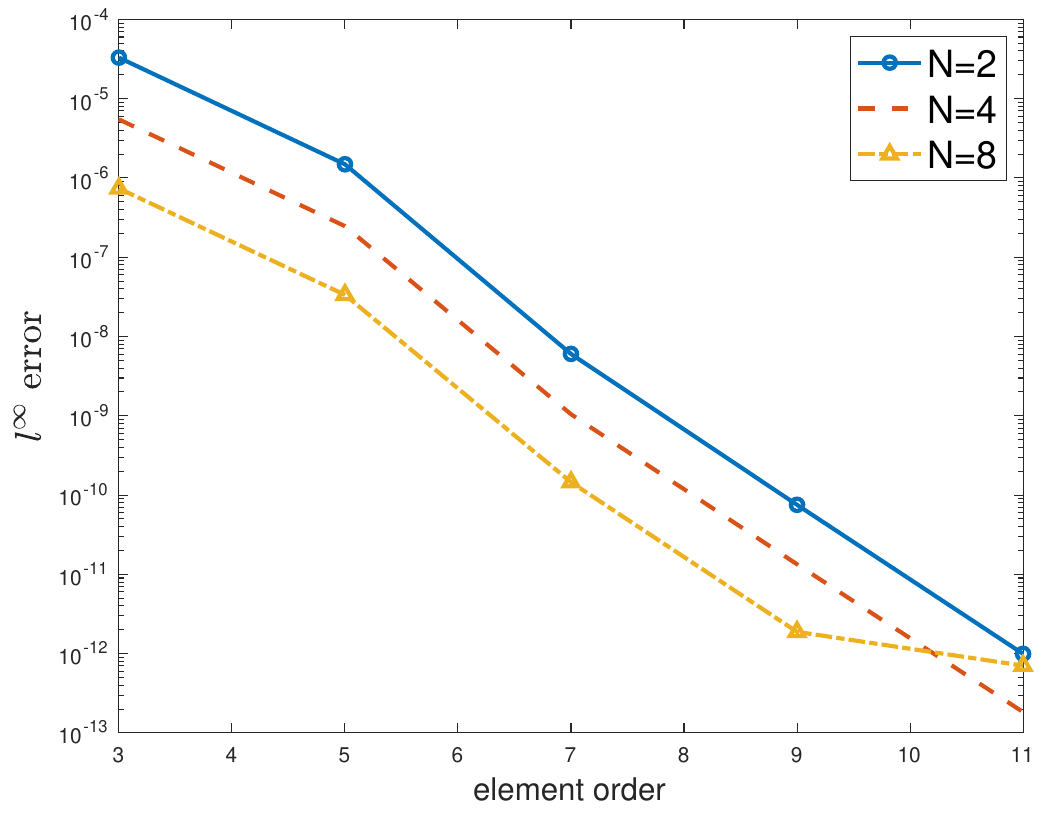}}
\caption{1D Poisson equation (non-polynomial bases): $l^{\infty}$ errors of FCE-$C^1$ versus (a) the element size, and (b) the element order $p$. 
$q=p+3$ Gauss-Lobatto-Legendre collocation points. 
A reference line for third-order convergence is shown in (a).
} 
\label{fg_6}
\end{figure}

\begin{table}[tb]
\centering
\begin{tabular}{llll}
\hline
$p$ & FCE-$C^1$ & FCE-$C^0$ & FCE-NC \\ \hline
3 & 2.91E-6 &3.25E-4 & 1.16E+0 \\
5 & 1.32E-7 &1.52E-5 & 7.44E-1 \\
7 & 5.63E-10 &8.25E-7 & 6.83E-2 \\
9 & 7.25E-12 &2.82E-8 & 2.53E-4 \\
11 & 9.93E-14 &5.96E-10 & 6.46E-7 \\
13 & 6.44E-14 &1.86E-11 & 1.23E-9 \\
15 & 1.55E-15 &4.83E-13 & 1.89E-11 \\
\hline
\end{tabular}
\caption{1D Poisson eqaution (non-polynomial bases): $l^{\infty}$ errors of FCE-$C^1$, FCE-$C^0$ and FCE-NC corresponding to a range of element orders $p$. $N=5$ uniform elements, and $q=p+3$ Gaus--Lobatto-Legendre collocation points in each element.
}
\label{tab_7}
\end{table}

We consider the domain $\Omega=[0,1]$ and the following BVP on $\Omega$,
\begin{subequations}\label{eq_149}
\begin{align}
& \frac{d^2u}{dx^2} = f(x), \quad x\in\Omega, \\
& u(0) = g_1, \quad u(1) = g_2,
\end{align}
\end{subequations}
where $u(x)$ is to be computed, 
$f(x)$ is a source term, and $g_1$ and $g_2$ are the boundary data. 
The source term and the boundary data are chosen such that this problem has the exact solution $u_{ex}(x)=\tanh (x)$.

We partition $\Omega$ into $N$ uniform elements and impose $C^1$ continuity conditions across the element boundaries. We employ 1D $C^1$ and $C^0$ FCEs and FCE-NCs to solve this problem. The free functions are expanded in terms of the sinusoidal bases given in~\eqref{eq_148}. We employ $q=p+3$ Gauss-Lobatto-Legendre quadrature points as the collocation points within each element, where $p$ is the element order.
When solving this problem using FCE-NC (FCEs with no continuity) with the sinusoidal bases, we find that it is necessary to scale those equations corresponding to the boundary conditions and the $C^0$ and $C^1$ continuity conditions by a constant factor and use the re-scaled equations. In other words, we scale the equations~\eqref{eq_132b}--\eqref{eq_132e} (in Remark~\ref{rem_6}) as follows,
\begin{subequations}\label{eq_150}
\begin{align}
& \sigma_0 u_i(X_{i+1}) - \sigma_0u_{i+1}(X_{i+1})=0, \quad 0\leqslant i\leqslant N-2; \label{eq_150b} \\
& \sigma_1\left. \frac{d u_i}{\partial x} \right|_{X_{i+1}} - \sigma_1\left. \frac{d u_{i+1}}{\partial x} \right|_{X_{i+1}}=0, \quad 0\leqslant i\leqslant N-2; \label{eq_150c} \\
& \sigma u_0(a) - \sigma C_a=0; \label{eq_150d} \\
& \sigma u_{N-1}(b) - \sigma C_b=0, \label{eq_150e}
\end{align}
\end{subequations}
where $\sigma$, $\sigma_0$ and $\sigma_1$ are prescribed positive constants. Note that while the re-scaled equations are mathematically equivalent to the original equations, the scaling will influence the least squares solution to the system.
We have employed $\sigma=\sigma_0 = \frac{1}{h^4}$ and $\sigma_1=\frac{1}{h^2}$ with FCE-NC for solving~\eqref{eq_149}, where $h=\frac{1}{N}$ denotes the element size.
Without the scaling (i.e.~$\sigma=\sigma_0=\sigma_1=1$), the FCE-NC results obtained with the sinusoidal bases would be markedly poorer (by around two orders of magnitude) for this problem. No scaling is performed when using 1D FCE-$C^0$ and FCE-$C^1$ for solving the problem.


Figure~\ref{fg_6} shows the $l^{\infty}$ errors obtained by FCE-$C^1$ as a function of the element size (plot (a)) and as a function of the element order (plot (b)). As the element size decreases, we observe a third-order convergence rate with FCE-$C^1$ (for different element order $p$). This behavior is a little different from that of the method when  Legendre polynomials are used to   represent the free functions in FCE. For a given mesh size, as the element order $p$ increases, the error decreases exponentially (Figure~\ref{fg_6}(b)). It is notable that the FCE method achieves an exponential convergence  with increasing number of sinusoidal basis functions within each element, while with Dirichlet BCs  imposed on the  domain.

Table~\ref{tab_7} shows a comparison of the $l^{\infty}$ errors corresponding to a range of element orders obtained with FCE-$C^1$, FCE-$C^0$ and FCE-NC for solving~\eqref{eq_149}. Five uniform elements are employed in these tests ($N=5$). The errors are observed to decrease exponentially with increasing element order $p$ for all three types of FCEs. The accuracy of the three  FCEs is quite different under the same or comparable element order. The FCE-$C^1$ results are significantly more accurate than those of FCE-$C^0$, which in turn are significantly more accurate than those of FCE-NC.

\begin{figure}[tb]
\centering 
\subfigure[h-refinement]{
\includegraphics[height=0.3\textwidth]{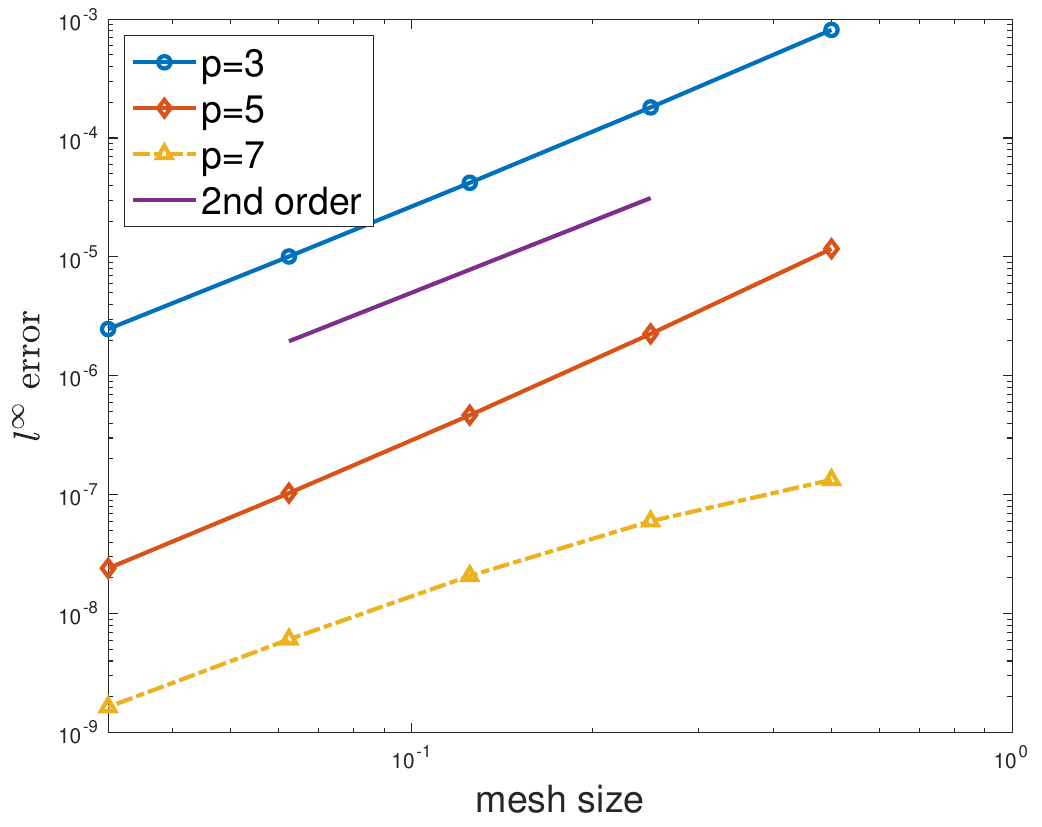}}
\subfigure[p-refinement]{
\includegraphics[height=0.3\textwidth]{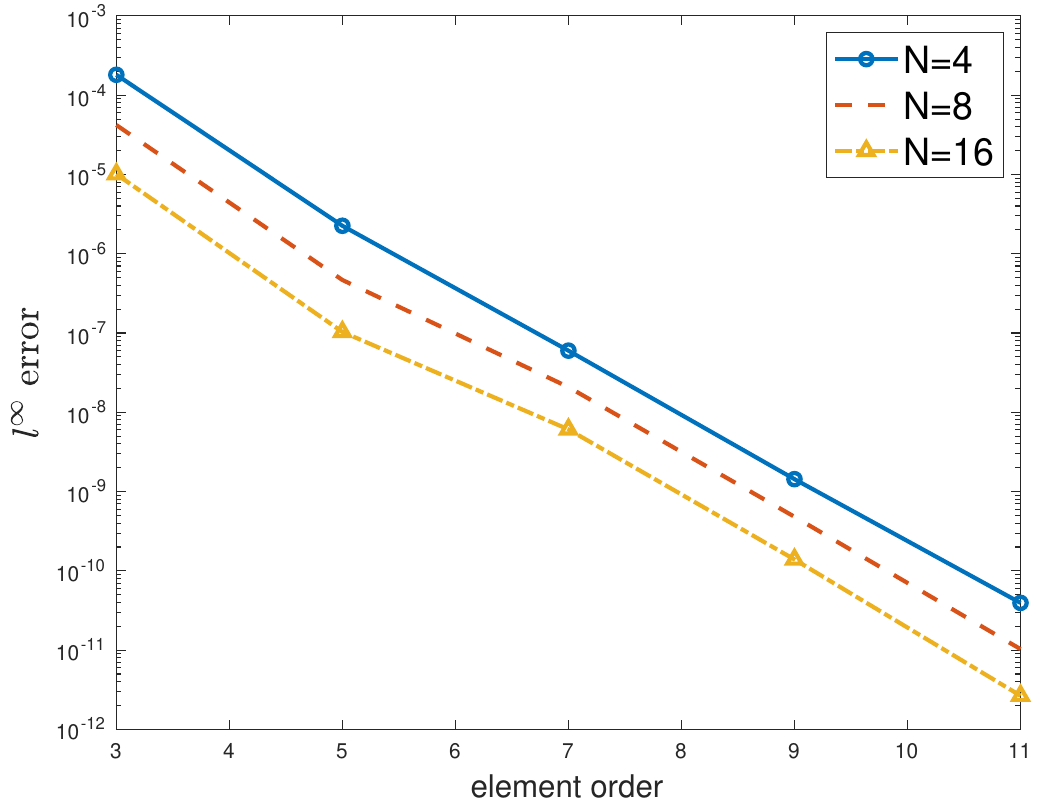}}
\caption{Initial value problem (non-polynomial bases): $l^{\infty}$ errors of FCE-$C^0$ as a function of (a) the element size, and (b) the element order. 
} 
\label{fg_7}
\end{figure}

\begin{table}[tb]
\centering
\begin{tabular}{lll} \hline 
  $p$  & FCE-$C^0$ & FCE-NC\\ \hline
 $3$ & 1.12E-4  & 8.71E-1\\ 
 $5$ &	1.35E-6  & 5.22E-3 \\ 
 $7$ & 4.38E-8  & 2.85E-4 \\ 
 $9$ & 1.03E-9  & 6.76E-6\\ 
 $11$ & 2.56E-11 & 1.66E-7\\ 
 \hline
\end{tabular}
\caption{Initial value problem (non-polynomial bases): $l^{\infty}$ errors of FCE-$C^0$ and FCE-NC corresponding to a range of element orders $p$. $N=5$ uniform elements in mesh.
}
\label{tab_8}
\end{table}

\subsubsection{An Initial Value Problem}

We consider the domain $\Omega=[0,1]$, and test the FCE method with sinusoidal bases using the following IVP,
\begin{subequations}\label{eq_151}
\begin{align}
& \frac{du}{dt} + u = f(t), \quad t\in\Omega, \label{eq_151a} \\
& u(0) = u_0, \label{eq_151b}
\end{align}
\end{subequations}
where $u(t)$ is to be computed, $f(t)$ is a prescribed source term, and $u_0$ denotes the initial condition. We choose $f$ and $u_0$ such that this problem has the exact solution $u_{ex}(t)=\tanh(t^2)$. 

To solve~\eqref{eq_151} with FCE we treat it as a ``boundary" value problem, with the condition~\eqref{eq_151b} imposed  at $t=0$. We partition $\Omega$ into $N$ uniform elements, and impose $C^0$ continuity on $u(t)$ across the element boundaries. We employ 1D $C^0$ FCEs and FCE-NCs to solve the resultant system, with the free functions represented by the sinusoidal bases given in~\eqref{eq_148}. With FCE-$C^0$, the $C^0$ continuity  and the initial condition~\eqref{eq_151b} are enforced exactly.  
With FCE-NC, we scale the initial condition~\eqref{eq_151b} by a  factor $\sigma>0$, and the $C^0$ continuity conditions by a  factor $\sigma_0>0$ in the least squares collocation formulation, in a way analogous to equations~\eqref{eq_150b} and~\eqref{eq_150d}. We employ $\sigma=\sigma_0=\frac{1}{h^2}$ in  FCE-NC, where $h=\frac{1}{N}$ denotes the element size. This scaling is important for the FCE-NC accuracy. Without this scaling (i.e.~$\sigma=\sigma_0=1$) the FCE-NC results would be significantly poorer. Note that no scaling is performed with FCE-$C^0$.
With each element we employ $q=p+3$ Gauss-Lobatto-Legendre collocation points.

Figure~\ref{fg_7} shows the $l^{\infty}$ errors of FCE-$C^0$ as a function of the element size (plot (a)) for several element orders, and as a function of the element order $p$ for several element sizes. With sinusoidal bases we observe that the convergence rate of FCE-$C^0$ is 2nd-order with respect to the element size (for fixed element orders). This behavior is again different from that when the free functions are represented by polynomials. 
For a given mesh size, we observe the  exponential convergence rate as the element order increases (plot (b)) with the sinusoidal bases. 

Table~\ref{tab_8} is a comparison of the $l^{\infty}$ errors obtained with FCE-$C^0$ and FCE-NC for solving~\eqref{eq_151}. While the errors of both FCE-$C^0$ and FCE-NC decrease exponentially with increasing element order, the FCE-$C^0$ results are considerably more accurate than those of FCE-NC under the same or comparable element orders.

\subsection{Numerical Tests with Relative Boundary Conditions}

We next test the FCE method using  boundary value problems involving a class of non-traditional boundary conditions. Specifically, these boundary conditions involve relative constraints, which can be linear or nonlinear, of the solution field or its derivatives on the domain boundary and also possibly on the domain interior. We refer to such conditions as relative boundary conditions. The FCE method can handle this type of problems  straightforwardly and enforce the BCs exactly. These problems, however, would be much more challenging to traditional element-based techniques (e.g.~finite element or spectral element methods). In the following simulations we employ Legendre polynomials for representing the free functions in FCE.

\subsubsection{1D Example with Linear Relative Boundary Conditions}
\label{sec_431}

\begin{figure}[tb]
\centering 
\subfigure[h-refinement]{
\includegraphics[width=0.4\textwidth]{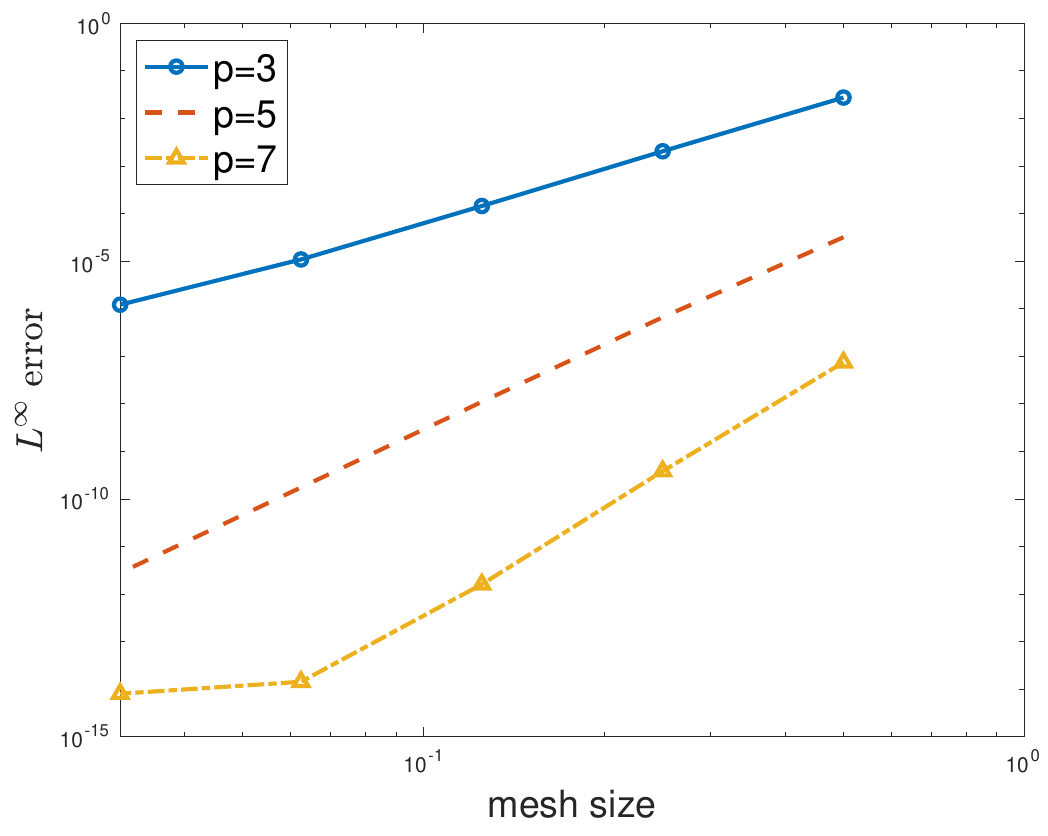}}
\subfigure[p-refinement]{
\includegraphics[width=0.4\textwidth]{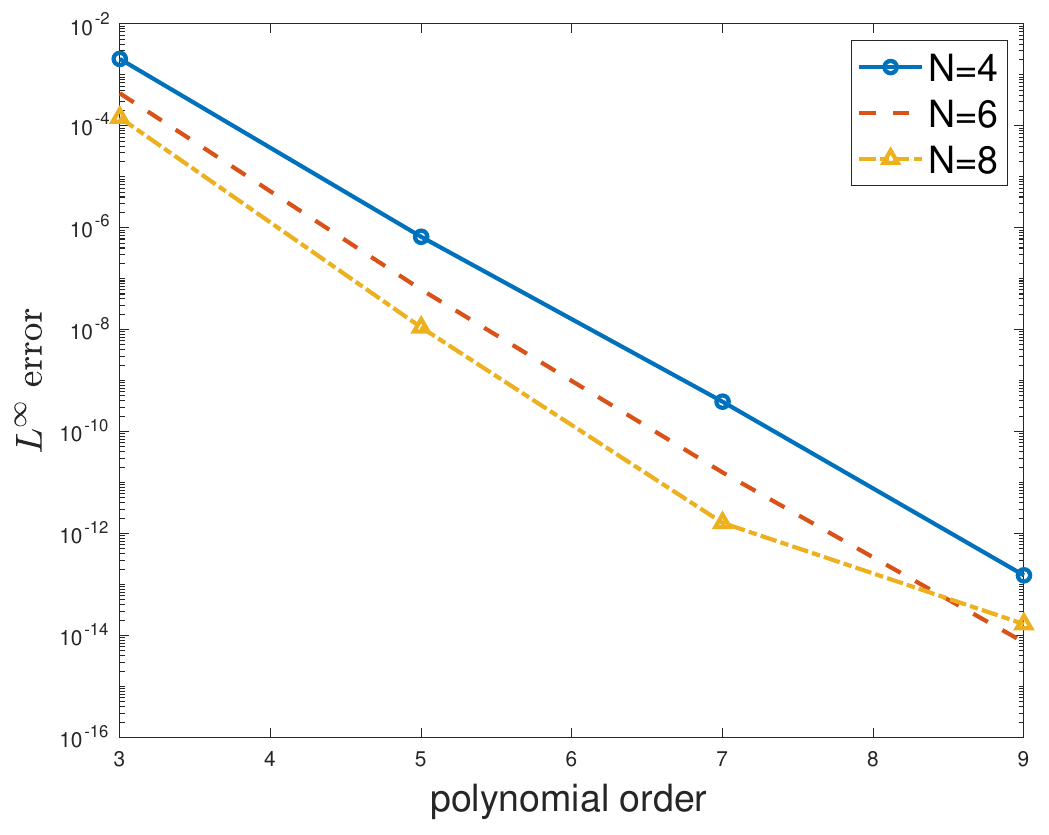}}
\caption{1D linear relative BC: $l^{\infty}$ errors of FCE-$C^1$ versus (a) the element size, and (b) the polynomial order. $q=p+2$ Gauss-Lobatto-Legendre collocation points in each element, where $p$ is the polynomial order. $N_1=N_2=\frac{N}{2}$, where $N$ denotes the total number of elements.
} 
\label{fg_8}
\end{figure}

Consider the domain $\Omega=[0,1]$ and the following BVP on $\Omega$,
\begin{subequations}\label{eq_152}
\begin{align}
& \frac{d^2u}{dx^2} - u = -(1+\pi^2)\cos(\pi x), \quad x\in\Omega, \label{eq_152a} \\
& u(0) = u(0.5) + 1, \label{eq_152b} \\
& \left.\frac{du}{dx}\right|_{x=1} = \left.\frac{du}{dx}\right|_{x=0.5} + \pi, \label{eq_152c}
\end{align}
\end{subequations}
where $u(x)$ is to be computed. Note that the BCs~\eqref{eq_152b} and~\eqref{eq_152c} represent constraint relations between $u(0)$ and $u(0.5)$ and between $u'(1)$ and $u'(0.5)$. This problem has an exact solution $u_{ex}(x)=\cos(\pi x)$.

We partition the domain $\Omega_1=[0,0.5]$ into $N_1$ uniform elements and the domain $\Omega_2=[0.5,1]$ into $N_2$ uniform elements, leading to a total of $N=N_1+N_2$ elements on $\Omega$. Let $X_i$ ($0\leqslant i\leqslant N$) denote the boundary points of the elements, where $X_0=0$, $X_{N_1}=0.5$ and $X_{N}=1$. We impose $C^1$ continuity  across the element boundaries, since~\eqref{eq_152a} is of second order.
The problem~\eqref{eq_152} is then reduced to  (see Remark~\ref{rem_6}),
\begin{subequations}\label{eq_153}
\begin{align}
& \frac{d^2u_i}{dx^2} - u_i = -(1+\pi^2)\cos(\pi x), \quad x\in[X_i,X_{i+1}], \quad 0\leqslant i\leqslant N-1; \label{eq_153a} \\
& u_i(X_{i+1}) = u_{i+1}(X_{i+1}), \quad 0\leqslant i\leqslant N-2; \label{eq_153b} \\
& u_i'(X_{i+1}) = u'_{i+1}(X_{i+1}), \quad 0\leqslant i\leqslant N-2; \label{eq_153c} \\
& u_0(X_0) = u_{N_1}(X_{N_1}) + 1, \label{eq_153d} \\
& u'_{N-1}(X_N) = u'_{N_1}(X_{N_1}) + \pi. \label{eq_153e}
\end{align}
\end{subequations}
Here $u_i(x)$ ($0\leqslant i\leqslant N-1$) is the restriction of $u(x)$ to the element $[X_i,X_{i+1}]$. 

We employ 1D $C^1$ FCEs to solve the system~\eqref{eq_153}. The equations~\eqref{eq_153b} and~\eqref{eq_153c} are automatically satisfied thanks to the $C^1$ FCE formulation. Equations~\eqref{eq_153d} and~\eqref{eq_153e} lead to the following relations,
\begin{equation}\label{eq_154}
\alpha_0 = \alpha_{N_1} + 1, \quad
\beta_{N} = \beta_{N_1} + \pi,
\end{equation}
where $\alpha_i$ and $\beta_i$ ($0\leqslant i\leqslant N$) are the parameters in the 1D FCE-$C^1$ formulation~\eqref{eq_68}. In light of these relations, only the parameters $\alpha_i$ ($1\leqslant i\leqslant N$) and $\beta_i$ ($0\leqslant i\leqslant N-1$) are independent and need to be computed. Therefore, equation~\eqref{eq_153a} is the only equation to be solved with $C^1$ FCE. By enforcing this equation on a set of collocation points within each element, one can solve the resultant algebraic system by the linear least squares method for the parameters $\alpha_i$ ($1\leqslant i\leqslant N$), $\beta_i$ ($0\leqslant N-1$), and $\hat g_{ij}$ involved in~\eqref{eq_68}.

Figure~\ref{fg_8} illustrates the convergence behavior of  $C^1$ FCE  in the h-refinement and the p-refinement for solving this problem. Here we employ $N_1=N_2=\frac{N}{2}$ elements on both $[0,0.5]$ and $[0.5,1]$. Legendre polynomials of order $p$ are employed to represent the free functions in FCE-$C^1$, with $q=p+2$ Gauss-Lobatto-Legendre collocation points in each element. Figure~\ref{fg_8}(a) shows the $l^{\infty}$ errors as a function of the element size, where the polynomial order is fixed. Figure~\ref{fg_8}(b) shows the $l^{\infty}$ errors as a function of the polynomial order for several given mesh sizes. The current method has captured the solution accurately in the presence of relative boundary conditions.

\subsubsection{1D Example with Nonlinear Relative Boundary Conditions}

\begin{figure}[tb]
\centering 
\includegraphics[width=0.4\textwidth]{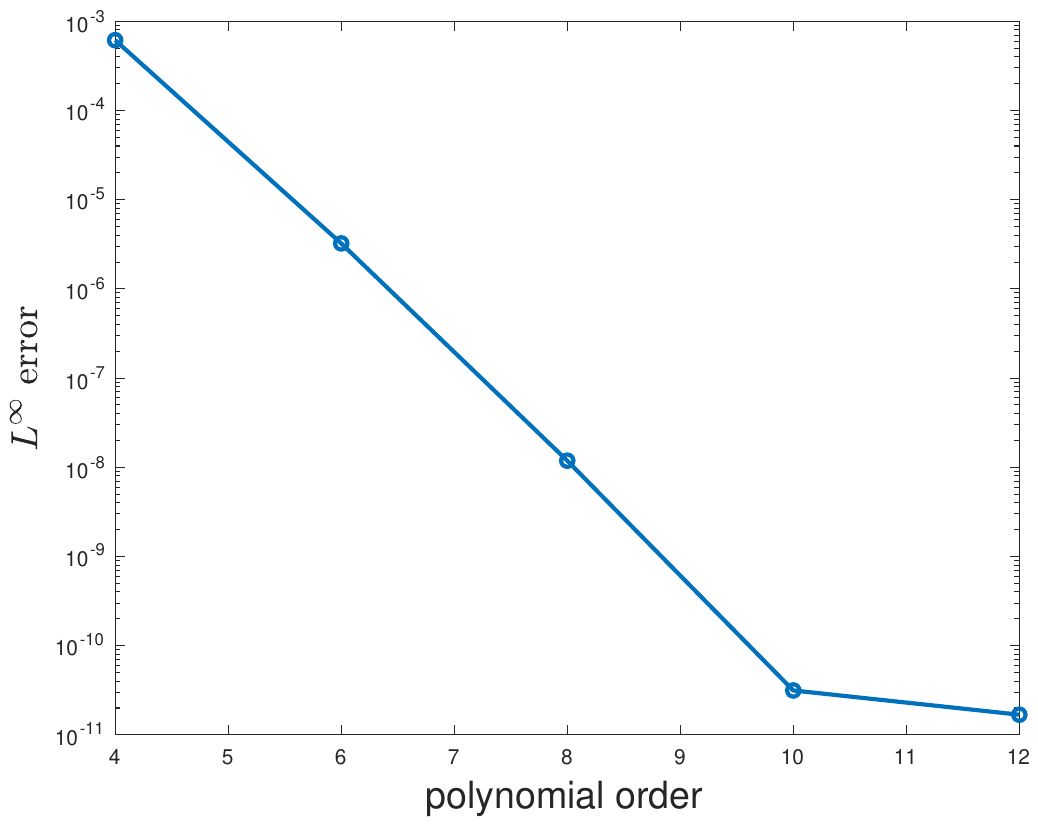}
\caption{1D nonlinear relative BC: $l^{\infty}$ error of FCE-$C^1$ versus the polynomial order. $N=2$ uniform elements, and $q=20$ uniform collocation points in each element.
} 
\label{fg_9}
\end{figure}

Consider the domain $\Omega=[0,1]$ and the following BVP on $\Omega$,
\begin{subequations}\label{eq_155}
\begin{align}
& \frac{d^2u}{dx^2} - u = -\frac12(1+\pi^2)\sin(\pi x) - 1, \quad x\in\Omega, \label{eq_155a} \\
& \left[u(0)\right]^3 = u(1), \label{eq_155b} \\
& \left.\frac{du}{dx}\right|_{x=0} = 2\left.\frac{du}{dx}\right|_{x=0.5} + \frac{\pi}{2}, \label{eq_155c}
\end{align}
\end{subequations}
where $u(x)$ is to be solved for. Note that this problem involves relative BCs, and that the condition~\eqref{eq_155b} is nonlinear. This problem has an exact solution $u_{ex}(x) = 1+\frac12\sin(\pi x)$.

The procedure for solving the problem~\eqref{eq_155} follows that of Section~\ref{sec_431}. We partition each of the domains $[0,0.5]$ and $[0.5,1]$ into $N_1$ uniform elements, with a total of $N=2N_1$ uniform elements on $\Omega$. We impose $C^1$ continuity  across the element boundaries, and  employ  $C^1$ FCEs to solve this problem. The boundary conditions~\eqref{eq_155b}-\eqref{eq_155c} lead to the following relations
\begin{equation}\label{eq_156}
\alpha_N = \alpha_0^3, \quad
\beta_0 = 2\beta_{N_1} + \frac{\pi}{2},
\end{equation}
where $\alpha_i$ and $\beta_i$ ($0\leqslant i\leqslant N$) are the parameters in the FCE-$C^1$ formulation~\eqref{eq_68}. In light of these relations, only the parameters $\alpha_i$ ($0\leqslant i\leqslant N-1$) and $\beta_i$ ($1\leqslant i\leqslant N$) are independent and to be solved for. After enforcing the equation~\eqref{eq_155a} on a set of collocation points within each element, we attain a system of algebraic equations, which is nonlinear due to~\eqref{eq_156}, about the parameters $\alpha_i$ ($0\leqslant i\leqslant N-1$), $\beta_i$ ($1\leqslant i\leqslant N$), and $\hat g_{ij}$ in~\eqref{eq_68}. 
This system is solved by the nonlinear least squares method (see Remark~\ref{rem_4}).

Figure~\ref{fg_9} shows the $l^{\infty}$ errors of FCE-$C^1$ as a function of the polynomial order $p$ for solving this problem. We have employed $N=2$ uniform elements, with $q=20$ uniform collocation points within each element. The figure clearly shows an exponential convergence rate of the current method relative to the polynomial order (before saturation beyond polynomial order $10$).

\subsubsection{2D Example with Relative Boundary Conditions}

We consider the 2D domain $\Omega=[0,1]\times[0,1]$ and the following BVP with the Poisson equation on $\Omega$,
\begin{subequations}\label{eq_157}
\begin{align}
& \frac{\partial^2u}{\partial x^2} + \frac{\partial^2u}{\partial y^2} = f(x,y), \quad (x,y)\in\Omega; \label{eq_157a} \\
& u(0,y) = C(0,y), \quad 
u(1,y) = u(0.5,y) + g(y), \quad y\in[0,1]; \label{eq_157b} \\
& u(x,0) = C(x,0), \quad u(x,1) = C(x,1), \quad x\in[0,1]. \label{eq_157c}
\end{align}
\end{subequations}
Here $u(x,y)$ is the field function to be computed. $C(x,y)$ is a prescribed function on the domain boundaries $x=0$, $y=0$, and $y=1$. $g(y)$ is another prescribed function that is compatible with $C(x,y)$, i.e.
\begin{equation}
g(0) = C(1,0) - C(0.5,0), \quad
g(1) = C(1,1) - C(0.5,1).
\end{equation}
Note that a relative BC is involved in~\eqref{eq_157b}, which imposes a constraint on $u(1,y)$ and $u(0.5,y)$. We choose the source term $f$ and the boundary data $C(x,y)$ and $g(y)$ such that the system~\eqref{eq_157} has the exact solution $u_{ex}(x,y) = \sin(\pi x)\cos(\pi y)$.

We partition each of the domains $\Omega_1=[0,0.5]\times[0,1]$ and $\Omega_2=[0.5,1]\times[0,1]$ into $N_{1}\times N_y$ uniform elements, leading to $N_x\times N_y$ elements in $\Omega=\Omega_1\cup\Omega_2$, with $N_x=2N_1$ elements along the $x$ direction and $N_y$ elements along the $y$ direction. We impose $C^1$ continuity  across the element boundaries in both $x$ and $y$ directions.

We employ 2D $C^0$ FCEs to solve this problem under two settings: (i) the relative BC in~\eqref{eq_157b} is enforced exactly with  FCE-$C^0$, and (ii) the relative BC in~\eqref{eq_157b} is enforced only approximately (in the least squares sense) with FCE-$C^0$.

To enforce the relative BC in~\eqref{eq_157b} exactly with $C^0$ FCEs, we modify the least squares collocation formulation in Section~\ref{sec_3} as follows. We use the same notations as in Sections~\ref{sec_2} and~\ref{sec_3}. 
Let $X_i$ ($0\leqslant i\leqslant N_x$) and $Y_j$ ($0\leqslant j\leqslant N_y$) denote the coordinates of the element boundaries along $x$ and $y$, respectively, with $X_0=Y_0=0$, $X_{N_1}=0.5$ and $X_{N_x}=Y_{N_y}=1$.
The boundary conditions~\eqref{eq_157b}--\eqref{eq_157c} then lead to the following relations,
\begin{subequations}\label{eq_159}
\begin{align}
&  \hat\alpha_{0j} = C(0,Y_j), \quad 0\leqslant j\leqslant N_y; \label{eq_159a0} \\ 
&\hat\alpha_{N_x,j} = \hat\alpha_{N_1,j} + g(Y_j), \quad 0\leqslant j\leqslant N_y; \label{eq_159a} \\
& \hat\alpha_{i0} = C(X_i,0), \quad \hat\alpha_{i,N_y} = C(X_i,1), \quad 0\leqslant i\leqslant N_x; \label{eq_159b} \\
& G_{0j}(y) = C(0,y), \quad y\in[Y_j,Y_{j+1}], \quad 0\leqslant j\leqslant N_y-1; \label{eq_159c0} \\
& G_{N_x,j}(y)=  G_{N_1,j}(y) + g(y), 
\quad y\in[Y_j,Y_{j+1}], \quad 0\leqslant j\leqslant N_y-1; \label{eq_159c} \\
& H_{i0}(x) = C(x,0), \quad H_{i,N_y}(x)= C(x,1), \quad x\in[X_i,X_{i+1}], \quad 0\leqslant i\leqslant N_x-1. \label{eq_159d}
\end{align}
\end{subequations}
Therefore, in the 2D $C^0$ FCE formulation~\eqref{eq_79}, the functions $G_{ij}(y)$ are given by~\eqref{eq_159c0}--\eqref{eq_159c}, and by~\eqref{eq_76} for $(1,0)\leqslant(i,j)\leqslant(N_x-1,N_y-1)$.  The functions $H_{ij}(x)$ are given by~\eqref{eq_159d}, and by~\eqref{eq_77} for $(0,1)\leqslant(i,j)\leqslant(N_x-1,N_y-1)$. In the parametric form~\eqref{eq_79}, the unknown parameters are given by,
\begin{equation}\label{eq_160}
\begin{split}
\bm\Theta = \{\ &\hat g_{e_{ij},k},\ \text{for}\ (0,0,1)\leqslant(i,j,k)\leqslant(N_x-1,N_y-1,\mathcal M);  \\
& \mathscr{\hat G}_{ij,k},\ \text{for}\ (1,0,1) \leqslant(i,j,k)\leqslant(N_x-1,N_y-1,m); \\
& \mathscr{\hat H}_{ij,k}, \ \text{for}\ 
(0,1,1)\leqslant(i,j,k)\leqslant(N_x-1,N_y-1,m); \\
& \hat\alpha_{ij}, \ \text{for}\ 
1\leqslant(i,j)\leqslant(N_x-1,N_y-1)
 \ \}.
\end{split}
\end{equation}
With the above modification to the FCE-$C^0$ formulation, the only equations that need to be solved are,
\begin{subequations}\label{eq_16a}
\begin{align}
& \frac{\partial^2u_{e_{ij}}}{\partial x^2} + \frac{\partial^2u_{e_{ij}}}{\partial y^2} = f(x,y), \quad (x,y)\in[X_i,X_{i+1}]\times[Y_j,Y_{j+1}], \quad 0\leqslant (i,j)\leqslant (N_x-1,N_y-1); \label{eq_161a} \\
& \left.\frac{\partial u_{e_{ij}}}{\partial x}\right|_{(X_{i+1},y)} = \left.\frac{\partial u_{e_{i+1,j}}}{\partial x}\right|_{(X_{i+1},y)}, \quad y\in[Y_j,Y_{j+1}], \quad 0\leqslant (i,j)\leqslant(N_x-2,N_y-1); \label{eq_161b} \\
& \left.\frac{\partial u_{e_{ij}}}{\partial y}\right|_{(x,Y_{j+1})} = \left.\frac{\partial u_{e_{i,j+1}}}{\partial y}\right|_{(x,Y_{j+1})}, \quad x\in[X_i,X_{i+1}], \quad 0\leqslant(i,j)\leqslant(N_x-1,N_y-2). \label{eq_161c}
\end{align}
\end{subequations}
By enforcing these equations on the collocation points, we can solve the resultant linear algebraic system for the unknown parameters $\bm\Theta$ defined in~\eqref{eq_160} by the linear least squares method.


To enforce the relative BC in~\eqref{eq_157b} approximately using $C^0$ FCEs, we note that the rest of the boundary conditions lead to the relations~\eqref{eq_159a0},~\eqref{eq_159b},~\eqref{eq_159c0}, and~\eqref{eq_159d}. Therefore, in the parametric form~\eqref{eq_79}, the unknown parameters consist of,
\begin{equation}\label{eq_162}
\begin{split}
\bm\Theta = \{\ &\hat g_{e_{ij},k},\ \text{for}\ (0,0,1)\leqslant(i,j,k)\leqslant(N_x-1,N_y-1,\mathcal M);  \\
& \mathscr{\hat G}_{ij,k},\ \text{for}\ (1,0,1) \leqslant(i,j,k)\leqslant(N_x,N_y-1,m); \\
& \mathscr{\hat H}_{ij,k}, \ \text{for}\ 
(0,1,1)\leqslant(i,j,k)\leqslant(N_x-1,N_y-1,m); \\
& \hat\alpha_{ij}, \ \text{for}\ 
1\leqslant(i,j)\leqslant(N_x,N_y-1)
 \ \}.
\end{split}
\end{equation}
With the FCE-$C^0$ formulation, the equations that need to be solved are equations~\eqref{eq_161a}--\eqref{eq_161c}, and
\begin{align}
& u_{e_{N_x,j}}(X_{N_x},y) = u_{e_{N_1,j}}(X_{N_1},y) + g(y), \quad y\in[Y_j,Y_{j+1}], \quad 0\leqslant j\leqslant N_y-1.
\end{align}
These equations are enforced on the collocation points, and the resultant linear algebraic system is solved for $\bm\Theta$ by the linear least squares method.

\begin{figure}[tb]
\centering 
\includegraphics[width=0.4\textwidth]{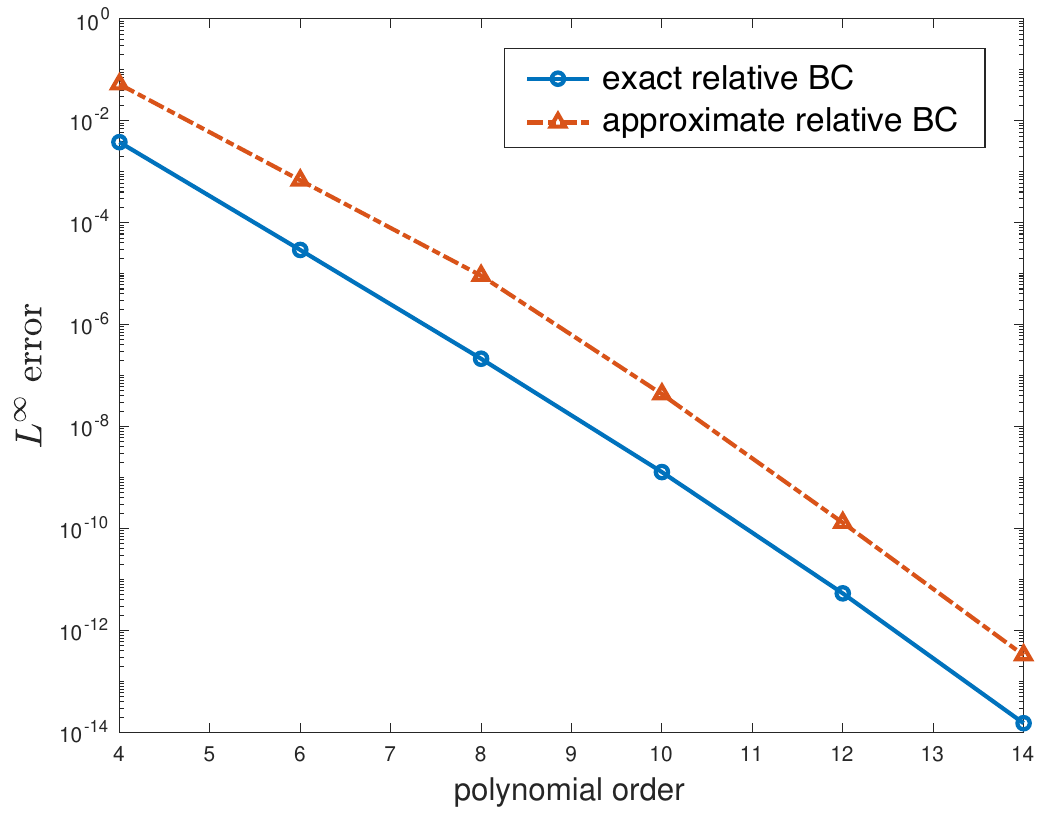}
\caption{2D Relative BC: $l^{\infty}$ errors of FCE-$C^0$ versus the polynomial order $p$, with the relative BC enforced exactly (blue line, circles) and enforced approximately (red line, triangles). $(N_x,N_y)=(2,1)$ uniform elements, with $q=p+2$ Gauss-Lobatto-Legendre collocation points in each direction.
} 
\label{fg_10}
\end{figure}

Figure~\ref{fg_10} shows the $l^{\infty}$ errors of FCE-$C^0$ as a function of the polynomial order, obtained with the relative BC enforced exactly and approximately. We have employed two uniform elements in the domain (partition along the $x$ direction), and $q=p+2$ Gauss-Lobatto-Legendre collocation points in both directions within each element. The errors decrease exponentially with increasing polynomial order. The simulation result is significantly more accurate (by an order of magnitude) when the relative BC is enforced exactly in  FCE-$C^0$.

\section{Concluding Remarks}
\label{sec_summary}


In this paper we have presented a method for solving boundary and initial value problems based on functionally connected elements (FCE). Constructions of $C^0$ and $C^1$ FCEs in one and two dimensions are developed based on a strategy stemming from the theory of functional connections (TFC). The FCE formulation provides the {\em general form} of piece-wise functions having an intrinsic $C^0$ or $C^1$ continuity across the element boundaries, 
which contains certain free functions or free parameters defined on the elements or associated with the element boundaries. The FCE construction ensures the exact satisfaction of $C^0$ or $C^1$ continuity across the elements, irrespective of the form of free functions or the value of free parameters. 
The free functions are represented by polynomial or non-polynomial bases in the current work.


We adopt FCEs together with  a least squares collocation approach for solving boundary value problems. The basic procedure can be summarized as follows. By partitioning the overall domain into sub-domains, 
we enforce the PDE/ODE on the sub-domains, the boundary condition on those sub-domain boundaries corresponding to the  domain boundary, and the $C^0$ or $C^1$ continuity  on the shared sub-domain boundaries. We use $C^0$ or $C^1$ FCEs to represent the solution field. Consequently, those $C^0$ or $C^1$ continuity conditions in the partitioned problem are automatically and exactly satisfied. The boundary condition can also be exactly satisfied by setting the FCE parameters appropriately. As a result, only the PDE/ODE needs to be enforced on the sub-domains. 
For this purpose, we choose a set of collocation points on each sub-domain, using either uniform grid points or Gauss-Lobatto quadrature points in the current work, and enforce the PDE/ODE on these points.
This process gives rise to a system of algebraic equations about the unknown parameters in the FCE parametric form. This system is linear for linear boundary value problems and becomes nonlinear if either the PDE or the boundary condition is nonlinear, and it generally does not have the same number of equations as the unknowns. We determine the unknown FCE parameters by solving this algebraic system via the linear or nonlinear least squares (Gauss-Newton) method~\cite{Bjorck1996}.

Variations of the above basic procedure have been investigated. We have looked into how to solve the partitioned boundary value problem using FCEs with a lower intrinsic continuity, for instance, using $C^0$ FCEs or FCE-NCs (FCEs with no continuity) for boundary value problems with  $C^1$ continuity imposed across sub-domains. In this case, a subset of the continuity conditions is satisfied automatically and exactly due to the FCE formulation, while the remaining continuity conditions are enforced via the least squares computation.  
Initial value problems and initial/boundary value problems can be solved by the FCE method based on the space-time approach, in which the time and space variables are treated on the same footing. Therefore, the problem effectively reduces to a ``boundary" value problem for the space-time domain.


We  test the FCE method  using a number of first-order and second-order initial/boundary value problems in one or two dimensions that are linear or nonlinear. Polynomial bases and non-polynomial bases (sinusoidal functions) for representing the FCE free functions have been studied numerically. The FCE method exhibits a spectral-like accuracy (exponential error convergence) as the number of bases increases within each element. 

The FCE method has a unique advantage over  traditional element-based methods for  boundary value problems with relative boundary conditions, in which the BCs involve linear or nonlinear relative constraints of the solution variables or their derivatives. The FCE method can enforce the relative BCs exactly and handle this type of problems quite straightforwardly. In contrast, these problems are considerably more challenging to traditional element-based techniques such as the finite element or spectral element methods.


In this paper we have concentrated on FCE formulations with an intrinsic continuity up to $C^1$, and in up to two spatial dimensions. It should be noted that the FCE constructions presented herein are systematic and can be extended to FCEs having higher intrinsic continuity or to three and higher dimensions. For example, the general form of piecewise functions satisfying an intrinsic $C^k$ ($k\geqslant 2$) continuity across the element boundaries can be constructed analogously. 

The FCE construction presented here has a limitation. It is confined to regular partitions of the domain, where the element boundaries are required to be aligned with the coordinate lines (or planes). How to extend the construction to more general domain partitions is an interesting problem, and will be pursued in a future work.


\section*{Acknowledgement}

This work was partially supported by the US National Science Foundation (DMS-2012415).

\bibliographystyle{plain}
\bibliography{lsq,sem,varpro,elm1,mypub}


\end{document}